\newcommand{\p}{\partial}
\newcommand{\ord}{\mathop{\rm ord}\nolimits}
\newcommand{\lcoef}{\mathop{\rm lcoef}\nolimits}
\newcommand{\sgn}{\mathop{\rm sgn}\nolimits}
\newcommand{\pr}{\mathop{\rm pr}\nolimits}
\newcommand{\supp}{\mathop{\rm supp}\nolimits}
\newcommand{\DO}{{\rm DO}}
\newcommand{\lara}[2]{\langle #1, #2 \rangle}
\newcommand{\blara}[2]{\big\langle #1, #2 \big\rangle}
\newcommand{\todo}[1][\null]{\ensuremath{\clubsuit}}
\newcommand{\noprint}[1]{}
\newcommand{\checked}[1][\null]{\ensuremath{\boldsymbol{\surd}}}
\newtheorem{theorem}{Theorem}[section]
\newtheorem{lemma}[theorem]{Lemma}
\newtheorem{corollary}[theorem]{Corollary}
\newtheorem{proposition}[theorem]{Proposition}
\theoremstyle{definition}
\newtheorem{definition}[theorem]{Definition}
\newtheorem{example}[theorem]{Example}
\newtheorem{remark}[theorem]{Remark}
\newtheorem*{notation*}{Notation}
\begin{document}
\par\noindent {\LARGE\bf
Parameter-dependent linear ordinary differential\\ equations and topology of domains
\par}

\vspace{4mm}\par\noindent{\large
Vyacheslav M.~Boyko$^{\dag 1}$, Michael Kunzinger$^{\ddag 2}$ and Roman O.~Popovych$^{\dag\ddag\S 3}$
\par\vspace{2mm}\par}

\vspace{2mm}\par\noindent{\it
${}^\dag$\,Institute of Mathematics of National Academy of Sciences of Ukraine, \\
$\phantom{{}^\dag}$\,3 Tereshchenkivska Str., Kyiv-4, 01601 Ukraine\\[2mm]
$^\ddag$\,Fakult\"at f\"ur Mathematik, Universit\"at Wien, Oskar-Morgenstern-Platz 1, 1090 Wien, Austria\\[2mm]
$^\S$\,Mathematical Institute, Silesian University in Opava, Na Rybn\'\i{}\v{c}ku 1, 746 01 Opava, \\
$\phantom{^\S}$\,Czech Republic}\\[2mm]
E-mail:
$^1$boyko@imath.kiev.ua,
$^2$michael.kunzinger@univie.ac.at,
$^3$rop@imath.kiev.ua
\par
{\vspace{8mm}\par\noindent\hspace*{8mm}\parbox{140mm}{\small
The well-known solution theory for (systems of) linear ordinary differential equations undergoes
significant changes when introducing an additional real parameter. Properties like the existence of
fundamental sets of solutions or characterizations of such sets via nonvanishing
Wronskians are sensitive to the topological properties of the underlying domain
of the independent variable and the parameter. We give a complete
characterization of the solvability of such parameter-dependent equations and systems in terms
of topological properties of the domain. In addition, we also investigate this
problem in the setting of Schwartz distributions.
\vskip 1em

\noindent
\emph{Keywords:} parameter-dependent linear ODE, fundamental set of solutions, Wronskian, distributional solutions
\medskip

\noindent
\emph{MSC2010:} 34A30, 
35D30
}\par\vspace{6mm}}

\section{Introduction}\label{sec:intro}

The solution theory of $p$th order linear ordinary differential equations (ODEs)
\begin{equation}\label{eq:ode_no_parameter}
\sum_{i=0}^p g^i(x)\frac{{\rm d}^iu}{{\rm d} x^i}=f(x),
\end{equation}
where $u$ is the unknown function,
the independent variable~$x$ varies in some interval $(a,b)$, $g^0,\dots,g^p,f\in {\rm C}\big((a,b)\big)$ and $g^p(x)\not=0$ for all $x\in (a,b)$,
is a classical subject that is a part of most textbooks in the field (e.g., \cite{Amann1990,Hartman2002,Walter1998}).
The set of all (classical) solutions to the homogeneous equation ($f=0$) forms a $p$-dimensional vector subspace~$V$ of
${\rm C}^p\big((a,b)\big)$, while the solution space of \eqref{eq:ode_no_parameter} is an affine subspace obtained by translating~$V$
by any particular solution of \eqref{eq:ode_no_parameter}. Any tuple of $p$ linearly independent solutions $(\varphi^1,\dots,\varphi^p)$
of the homogeneous equation is called a fundamental set of solutions, and setting $\varphi_{s-1}:={\rm d}^{s-1}\varphi/{\rm d} x^{s-1}$, $s=1,\dots,p$, we write
\begin{equation}\label{eq:def_wronskian}
\mathrm W(\varphi^1,\dots,\varphi^p) := \det
\begin{pmatrix}
\varphi^1 & \dots & \varphi^p \\
\varphi^1_1 & \dots & \varphi^p_1 \\
\vdots & \ddots & \vdots \\
\varphi^1_{p-1} & \dots & \varphi^p_{p-1}
\end{pmatrix}
\end{equation}
for the corresponding Wronskian.
Solutions $\varphi^1,\dots,\varphi^p$ of the homogeneous equation form a fundamental set
if and only if the Wronskian $\mathrm W(\varphi^1,\dots,\varphi^p)$ does not vanish at $x_0\in (a,b)$,
and therefore it vanishes nowhere on~$(a,b)$ in view of the Liouville--Ostrogradski formula,
\[
\mathrm W(\varphi^1,\dots,\varphi^p)(x)=\mathrm W(\varphi^1,\dots,\varphi^p)(x_0)\exp\left(-\int_{x_0}^x\frac{g^{p-1}(x')}{g^p(x')}\,{\rm d}x'\right).
\]
In this case, $(\varphi^1,\dots,\varphi^p)$
is a basis of the vector space of solutions to the homogeneous equation.
See, e.g., \cite[Section~IV.8.iii]{Hartman2002} or \cite[Section~19.II]{Walter1998}.
A particular solution to the inhomogeneous equation
\eqref{eq:ode_no_parameter} is given by (cf.\ \cite[Proposition~(14.3)]{Amann1990})
\begin{equation}\label{eq:particular_sol_from_wronskian}
u(x) = \sum_{s=1}^p (-1)^{p-s} \varphi^s(x) \int_{x_0}^x\psi^s(x')\,{\rm d}x'
\quad\mbox{with}\quad
\psi^s:=\frac{f}{g^p}\frac{\mathrm W(\varphi^1,\dots,\lefteqn{\varphi^s}\!\smash{\diagdown}\,,\dots,\varphi^p)}{\mathrm W(\varphi^1,\dots,\varphi^p)}.
\end{equation}
Finally, we recall the well-known fact that any $p$th order equation of the form \eqref{eq:ode_no_parameter} can be rewritten
as a linear system of first-order ordinary differential equations in the normal Cauchy form,
so that the solution theory of (scalar) equations of the form \eqref{eq:ode_no_parameter} can be reduced to that of such systems.
Concretely, setting \[v^s:=u_{s-1},\quad s=1,\dots,p,\]
equation \eqref{eq:ode_no_parameter} is equivalent to the system
\begin{equation}\label{eq:ode_system_no_parameter}
v_1 = A(x)v + F(x)
\end{equation}
for $v=(v^1,\dots,v^p)^{\mathsf T}$, where
\begin{equation}\label{eq:matrix_system}
A = \begin{pmatrix}
0 & 1 & 0 & \dots & 0 & 0 \\
0 & 0 & 1 & \dots & 0 & 0 \\
0 & 0 & 0 & \dots & 0 & 0 \\
\vdots & \vdots & \vdots & \ddots & \vdots & \vdots \\
0 & 0 & 0 & \dots & 0 & 1 \\
-\frac{g^0}{g^p} & -\frac{g^1}{g^p} & -\frac{g^2}{g^p} & \dots & -\frac{g^{p-2}}{g^p} & -\frac{g^{p-1}}{g^p}
\end{pmatrix},
\quad
F = \begin{pmatrix}
0\\
0\\
0\\
\vdots\\
0\\
\frac{f}{g^p}
\end{pmatrix}.
\end{equation}

In this paper we address the question of how this solution theory changes
if the coefficient functions and the right hand side in \eqref{eq:ode_no_parameter}
are allowed to additionally depend on a real parameter~$t$.
Thus we shall be investigating ODEs of the form
\begin{equation}\label{eq:main_problem}
Pu \equiv \sum_{i=0}^p g^i(t,x) u_i = f(t,x),
\end{equation}
where, analogously to the above, $u_i:=\p ^i u/\p x^i$, $i=0,\dots,p$, as well as systems of the form
\begin{equation}\label{eq:main_problem_system}
v_1 = A(t,x) v + F(t,x),
\end{equation}
for $(t,x)$ varying in some open subset $\Omega$ of ${\mathbb R}^2$.
Here $g^0,\dots,g^p,f\in {\rm C}(\Omega,\mathbb R)$, $g^p(t,x)\ne0$ for all $(t,x)\in\Omega$,
$A\in {\rm C}(\Omega,\mathrm M_p(\mathbb R))$ and $F\in {\rm C}(\Omega,\mathbb R^p)$.
We are, in particular, interested in determining the influence of the topology of $\Omega$ on the structure of the solution
spaces of~\eqref{eq:main_problem} and of~\eqref{eq:main_problem_system}.
Basic examples show that solvability may completely break down already for very simple sets $\Omega$
(e.g.\ for the punctured plane, cf.\ Example~\ref{ex:1stOrderInhomLinODEWithoutSolutions} below).
Conversely, for nice enough domains, e.g.\ for rectangles, the parameter-dependent theory
is practically the same as in the single-variable case.
We want to find out which properties of the domain determine the solvability of linear parameter-dependent ODEs.
Indeed, we will completely characterize the solvability of~\eqref{eq:main_problem} and of~\eqref{eq:main_problem_system}
in terms of a topological property of $\Omega$, namely the so-called $x$-simplicity of $\Omega$,
a notion well-known from elementary integration theory (cf., e.g., \cite{Marsden&Tromba}).
In addition, we characterize the existence of fundamental sets of solutions of~\eqref{eq:main_problem}
or of fundamental matrices of~\eqref{eq:main_problem_system},
and the nonvanishing of the corresponding Wronskians,
again in terms of the $x$-simplicity of $\Omega$ (or its connected components).

As \eqref{eq:main_problem} may also be viewed as a specific kind of linear partial differential equation,
the question of existence of solutions in terms of properties of the underlying domain bears some resemblance
to notions like H\"ormander's concept of $P$-convexity \cite{Hoermander1976,Hoermander1983_2}. 
In particular, it is of interest to address the problems stated above also within the framework of Schwartz distributions.

In the remainder of this introduction we fix some notations and outline the content of the sections to follow.
Let us briefly comment on our choice of notation and style of presentation.
Our original motivation for studying parameter-dependent linear ODEs derives from
our desire to develop a more rigorous theory of Darboux transformations
for (1+1)-dimensional linear evolution equations
than the existing ones; cf.\ \cite{Matveev1979,Matveev&Salle1991,Popovych&Kunzinger&Ivanova2008}.
This explains why we primarily focus on scalar equations and just outline the corresponding results for systems
(contrary to the standard approach in the ODE literature).
It also justifies the notation of variables,
$x$ for the independent variable and $t$ for the parameter, as well as their order.

\begin{notation*}
Given a subset~$U$ of the $(t,x)$-plane,
the projection $U$ to the $t$-axis is denoted by~$\pr_tU$,
where $\pr_t\colon\mathbb R^2\to\mathbb R$ is the projection $(t,x)\mapsto t$,
and for each $t_0\in\pr_tU$ the set~$U_{t_0}$ is the projection of the section of~$U$ by the line $t=t_0$ to the $x$-axis,
\begin{gather*}
\pr_tU:=\{t\in\mathbb R\mid \exists\ x\in\mathbb R\colon (t,x)\in U\},\quad
\pr_tU\ni t\mapsto U_t:=\{x\in\mathbb R\mid (t,x)\in U\}.
\end{gather*}
For a function~$g\colon U\to\mathbb R$,
the expression ``$g\ne0$ on~$U$'' means that $g(z)\ne0$ for any $z\in U$.

{
For an open set~$\Omega$ of the $(t,x)$-plane,
${\rm C}(\Omega)$, ${\rm C}^\infty(\Omega)$ and ${\rm C}^\omega(\Omega)$
are the spaces of continuous, smooth and real analytic functions on~$\Omega$, respectively.
${\rm C}^p_x(\Omega)$ with $p\in\mathbb N$ denotes the subspace of functions from~${\rm C}(\Omega)$
that admit derivatives with respect to~$x$ up to order~$p$, and these derivatives are continuous on~$\Omega$.
Analogously, ${\rm C}^\omega_x(\Omega)$ denotes the subspace of functions from~${\rm C}(\Omega)$
that are real analytic with respect to~$x$ and whose derivatives with respect to~$x$ are continuous on~$\Omega$.
$\DO(\Omega)$, $\DO^\infty(\Omega)$, $\DO^\omega_x(\Omega)$ and $\DO^\omega(\Omega)$ denote the sets of linear differential operators in~$x$
(hence of the form \eqref{eq:main_problem})
with coefficients from~${\rm C}(\Omega)$, ${\rm C}^\infty(\Omega)$, ${\rm C}^\omega_x(\Omega)$ and~${\rm C}^\omega(\Omega)$, respectively,
and whose leading coefficients do not vanish on the entire (open) set~$\Omega$.
$\DO_1(\Omega)$, $\DO^\infty_1(\Omega)$, $\DO^\omega_{x,1}(\Omega)$ and $\DO^\omega_1(\Omega)$
are, respectively, the subsets of operators from $\DO(\Omega)$, $\DO^\infty(\Omega)$, $\DO^\omega_x(\Omega)$ and $\DO^\omega(\Omega)$
whose leading coefficients are equal to one.
The notation $\ord P$ and $\lcoef P$ is used for the order and the leading coefficient of the operator~$P\in\DO(\Omega)$, respectively.
If $\ord P=p$, then we view the operator~$P$ as a map from ${\rm C}^p_x(\Omega)$ to~${\rm C}(\Omega)$,
and thus (classical) solutions of the equation~$\mathcal P$: $Pu=0$
belong to~${\rm C}^p_x(\Omega)$.

}

Expressions of the form $\zeta\psi$ for some functions $\zeta\in {\rm C}(\pr_t\Omega)$ and $\psi\in {\rm C}(\Omega)$
should always be interpreted as the product of~$\psi$ by the pullback of $\zeta$ to~$\Omega$
with respect to the map~$\pr_t\big|_\Omega$\,.

As already noted above, for a function~$u$ of~$(t,x)$ we set $u_i:=\p^i u/\p x^i$, $i\in\mathbb N$, $u_0:=u$,
and $\p_x=\p/\p x$.
We also employ, depending upon convenience or necessity, the notation $u_x=u_1$.
By $\mathrm W(\varphi^1,\dots,\varphi^p)$ we denote
the Wronskian of functions~$\varphi^1,\dots,\varphi^p\in {\rm C}^p_x(\Omega)$ in the variable~$x$, i.e.,
$\mathrm W(\varphi^1,\dots,\varphi^p)=\det(\p_x^{s'-1}\varphi^s)_{s,s'=1,\dots,p}$.

The indices $s$, $s'$ and $s''$ run from 1 to~$p$, and summation with respect to repeated indices is always understood.
\end{notation*}

The plan of the paper is as follows: In Section \ref{sec:xSimpleSets} we introduce sets simple with respect to a variable
(in our case, $x$), and prove some basic topological properties of such sets.
In Section \ref{sec:FundamentalSolutionSetsOfLinODEsWithParameter}
we provide an appropriate notion of fundamental sets of solutions to homogeneous linear parameter-dependent ODEs
and relate it to the nonvanishing of the corresponding Wronskian.
We also characterize both concepts in terms of $x$-simplicity of (pieces of) the underlying domain~$\Omega$.
The inhomogeneous setting is studied in Section~\ref{sec:ExistenceOfSolutionsOfInhomLinODEsWithParameter},
where we derive necessary and sufficient conditions for solvability in terms of $x$-simplicity
and also quantify the `degree of non-solvability' in case some connected component of $\Omega$ fails to be $x$-simple.
In Section \ref{sec:DistrSolutionsOfLinODEsWithParameter} we then turn to the distributional setting,
singling out the relevant case of ${\rm C}^0$-semiregular distributions.
The final Section~\ref{sec:LinSystemsOfODEsWithParameter} is devoted to the study of systems of parameter-dependent linear ODEs.
In Appendix~\ref{sec:DistributionsWithVanishingPartialDerivatives}
we prove a structure theorem for distributions with vanishing partial derivatives on domains
that are simple with respect to a variable.
This is required for deriving the general form of distributional solutions
to parameter-dependent (systems of) linear ODEs on such domains
in Sections~\ref{sec:DistrSolutionsOfLinODEsWithParameter} and~\ref{sec:LinSystemsOfODEsWithParameter}.

\section{Sets simple with respect to a variable}
\label{sec:xSimpleSets}

Given a subset~$U$ of the $(t,x)$-plane, by~${\rm lb}_U$ and~${\rm ub}_U$
we denote the \emph{lower} and \emph{upper bounds of~$U$ in~$x$},
which are functions from $\pr_tU$ to $\mathbb R\cup\{-\infty,+\infty\}$
defined by
\[
{\rm lb}_U(t):=\inf U_t,\quad {\rm ub}_U(t):=\sup U_t,\quad t\in\pr_tU.
\]
In view of the inequality ${\rm lb}_U(t)\leqslant x\leqslant{\rm ub}_U(t)$ for any $t\in\pr_tU$ and any $x\in U_t$,
the functions~${\rm lb}_U$ and~${\rm ub}_U$ may attain values only from $\mathbb R\cup\{-\infty\}$ and $\mathbb R\cup\{+\infty\}$, respectively.
It is obvious that $U\subseteq\{(t,x)\in\mathbb R^2\mid t\in\pr_tU,\,{\rm lb}_U(t)\leqslant x\leqslant{\rm ub}_U(t)\}$.
See Figure~\ref{fig:ObjectRelatedToXSimplicity} that illustrates some objects related to $x$-simplicity.

\begin{lemma}\label{lem:OnOpenSetLB&UB}
If a subset~$\Omega$ of the $(t,x)$-plane is open,
then its projection $I:=\pr_t\Omega$ is an open subset of~$\mathbb R$
and the functions~$a:={\rm lb}_\Omega$ and~$b:={\rm ub}_\Omega$
are upper and lower semi-continuous, respectively.
\end{lemma}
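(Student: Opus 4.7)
The plan is to exploit the openness of $\Omega$ in the strongest possible way, namely to produce, at any chosen point, an axis-aligned open rectangle contained in $\Omega$. All three claims of the lemma follow by the same geometric move.

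First I would handle $I=\pr_t\Omega$. Given $t_0\in I$, pick any $x_0$ with $(t_0,x_0)\in\Omega$. By openness there exist $\delta,\varepsilon>0$ with $(t_0-\delta,t_0+\delta)\times(x_0-\varepsilon,x_0+\varepsilon)\subset\Omega$, so in particular every $t$ in the interval $(t_0-\delta,t_0+\delta)$ lies in~$I$. Hence $I$ is open.

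Next I would prove that $a=\mathrm{lb}_\Omega$ is upper semi-continuous by showing that $\{t\in I\mid a(t)<c\}$ is open in~$I$ (hence in~$\mathbb R$) for every $c\in\mathbb R$. If $a(t_0)<c$, then by definition of the infimum there exists $x_0\in\Omega_{t_0}$ with $x_0<c$. Choosing a rectangular neighborhood of $(t_0,x_0)$ inside $\Omega$ as above, one sees that $x_0\in\Omega_t$ for all $t$ in the corresponding $t$-interval, so $a(t)\leqslant x_0<c$ there. The argument for $b=\mathrm{ub}_\Omega$ is identical after replacing the defining inequality of lower semi-continuity, $\{t\in I\mid b(t)>c\}$ being open, with the symmetric condition: an $x_0\in\Omega_{t_0}$ with $x_0>c$ does the job.

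There is no real obstacle here; the only point to be mindful of is that the admissible values $a(t)=-\infty$ and $b(t)=+\infty$ are compatible with the semi-continuity statements (they impose no condition at~$t_0$ when $a(t_0)=-\infty$, and automatically satisfy $\{a<c\}$ being open since $t_0$ is contained in it for every real $c$). In particular, the proof does not require any assumption on boundedness of the sections~$\Omega_t$.
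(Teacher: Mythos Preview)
Your proof is correct and follows essentially the same approach as the paper: both use openness of~$\Omega$ to find a small neighborhood (a ball in the paper, a rectangle in your version) of a suitably chosen point inside~$\Omega$, and read off the conclusions from its $t$-projection. The only cosmetic difference is that the paper verifies upper semi-continuity of~$a$ via the $\varepsilon$--$\delta$ definition (separating the cases $a(t_0)\in\mathbb R$ and $a(t_0)=-\infty$), whereas you use the equivalent characterization that the sublevel sets $\{a<c\}$ are open, which handles both cases uniformly.
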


\begin{proof}
Since for any ball contained in~$\Omega$ its projection to the $t$-axis is an open interval contained in~$I$,
it is obvious that $I$ is an open set.
Fix an arbitrary $t_0\in I$.
Then $a(t_0)<b(t_0)$ since the set~$\Omega$ is open.
If $a(t_0)\in\mathbb R$, then for any~$\varepsilon>0$ with $a(t_0)+\varepsilon<b(t_0)$,
there is a point $z_0=(t_0,x_0)\in\Omega$ with $x_0\in(a(t_0),a(t_0)+\varepsilon]$
and thus there exists a $\delta>0$ such that the ball $B_\delta(z_0)$ is contained in~$\Omega$.
Therefore, for any $t\in(t_0-\delta,t_0+\delta)$ we have $t\in I$ and $a(t)<a(t_0)+\varepsilon$.
Analogously, if $a(t_0)=-\infty$, then for an arbitrary~$N>0$ with $-N<b(t_0)$,
again there is a point $z_0=(t_0,x_0)\in\Omega$ with $x_0\in(-\infty,-N]$
and hence there exists a $\delta>0$ such that the ball $B_\delta(z_0)$ is contained in~$\Omega$.
Hence for any $t\in(t_0-\delta,t_0+\delta)$ we have $t\in I$ and $a(t)<-N$.
In total, this means that the function~$a$ is upper semi-continuous on~$I$.
The lower semi-continuity of~$b$ is proved in a similar way.
\looseness=-1
\end{proof}

In the above notation, we have
$\Omega\subseteq\{(t,x)\mid t\in I,\, a(t)<x<b(t)\}$ if the set~$\Omega$ is open.

\begin{definition}\label{def:XSimpleSet}
We call a subset~$U$ of the $(t,x)$-plane an \emph{$x$-simple set}
if the intersection of~$U$ by any line $t=t_0\in\mathbb R$ is an open interval within this line or the empty set.
\end{definition}

Equivalently, a subset~$U$ of the $(t,x)$-plane is called an $x$-simple set if
there exist a subset~$I$ of the $t$-axis and functions $a,b\colon I\to\mathbb R\cup\{-\infty,+\infty\}$
with $a(t)<b(t)$ for any $t\in I$ such that
\begin{gather}\label{eq:EquivDefOfXSimpleSet}
U=\{(t,x)\mid t\in I,\, a(t)<x<b(t)\}.
\end{gather}
Then $I=\pr_tU$, $a={\rm lb}_U$ and~$b={\rm ub}_U$.
Note that this definition of $x$-simple set is similar to
but in fact different from and, in certain sense, more general than the one used in elementary calculus
(cf., e.g., \cite[p.~341]{Marsden&Tromba}).

\begin{lemma}\label{lem:OnOpenX-SimpleRegion}
An $x$-simple subset~$\Omega$ of the $(t,x)$-plane is open if and only if
its projection $I:=\pr_t\Omega$ is an open subset of~$\mathbb R$
and its lower and upper bounds, $a:={\rm lb}_\Omega$ and~$b:={\rm ub}_\Omega$,
are upper and lower semi-continuous functions on~$I$, respectively.
Moreover, in this case there exists a smooth function~$\theta\colon I\to\mathbb R$ such that
$a(t)<\theta(t)<b(t)$ for any $t\in I$.
\end{lemma}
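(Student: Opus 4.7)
\medskip

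\noindent\textbf{Proof proposal.}
The plan is to split the argument into the equivalence and then the construction of~$\theta$. For the forward implication of the equivalence, the openness of~$I=\pr_t\Omega$ and the semi-continuity of the bounds follow immediately from Lemma~\ref{lem:OnOpenSetLB&UB}, since that lemma holds for every open subset of the plane, $x$-simple or not. So the real work in this direction is already done.

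For the reverse implication, assume $I$ is open in~$\mathbb R$, $a$ is upper semi-continuous on~$I$ and $b$ is lower semi-continuous on~$I$. I use the representation~\eqref{eq:EquivDefOfXSimpleSet}, $\Omega=\{(t,x)\mid t\in I,\,a(t)<x<b(t)\}$. To show that $\Omega$ is open, I would pick $(t_0,x_0)\in\Omega$ and choose $\varepsilon>0$ small enough that $a(t_0)<x_0-2\varepsilon$ and $x_0+2\varepsilon<b(t_0)$ (with the obvious modifications if $a(t_0)=-\infty$ or $b(t_0)=+\infty$). The upper semi-continuity of~$a$ at $t_0$ (with threshold $x_0-\varepsilon$) and the lower semi-continuity of~$b$ at $t_0$ (with threshold $x_0+\varepsilon$), together with the openness of~$I$, produce a $\delta>0$ such that the open rectangle $(t_0-\delta,t_0+\delta)\times(x_0-\varepsilon,x_0+\varepsilon)$ is contained in~$\Omega$.

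For the smooth selection~$\theta$, I would use a partition-of-unity gluing of locally constant sections. For each $t_0\in I$, since $a(t_0)<b(t_0)$, pick any $c(t_0)\in(a(t_0),b(t_0))\cap\mathbb R$. By upper semi-continuity of~$a$ and lower semi-continuity of~$b$, together with the openness of~$I$, there is an open interval $J(t_0)\subset I$ around~$t_0$ on which $a(s)<c(t_0)<b(s)$. The open cover $\{J(t_0)\}_{t_0\in I}$ of the open set $I\subset\mathbb R$ admits a smooth partition of unity $\{\chi_n\}_{n\in\mathbb N}$ subordinate to a countable locally finite refinement, with each $\chi_n$ supported in some $J(t_n)$. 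Define
\[
\theta(t):=\sum_n\chi_n(t)\,c(t_n),\qquad t\in I.
\]
Local finiteness of the supports gives that $\theta\in{\rm C}^\infty(I)$. For any fixed $t\in I$, the value $\theta(t)$ is a convex combination of the values $c(t_n)$ for the finitely many indices~$n$ with $\chi_n(t)>0$, and each such~$n$ satisfies $t\in J(t_n)$, hence $a(t)<c(t_n)<b(t)$; taking the convex combination preserves these strict inequalities, giving $a(t)<\theta(t)<b(t)$.

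The only delicate point I expect is the handling of the infinite values of~$a$ and~$b$ in the openness argument, but the semi-continuity hypotheses are phrased so that the standard $\varepsilon$-$N$ reformulation already used in the proof of Lemma~\ref{lem:OnOpenSetLB&UB} dispenses with this cleanly. The smooth selection step is then a routine partition-of-unity construction on an open subset of~$\mathbb R$.
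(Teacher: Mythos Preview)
Your proposal is correct and follows essentially the same route as the paper: the forward implication is deferred to Lemma~\ref{lem:OnOpenSetLB&UB}, the reverse implication is the same rectangle-around-a-point argument using the two semi-continuities, and the smooth section~$\theta$ is obtained by the identical partition-of-unity averaging of locally constant selections, with the strict inequality verified via the convex-combination observation. The only cosmetic differences are your explicit mention of a countable locally finite refinement and your slightly different choice of~$\varepsilon$ in the openness step.
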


\begin{figure}
\centering
\includegraphics[width=.6\linewidth]{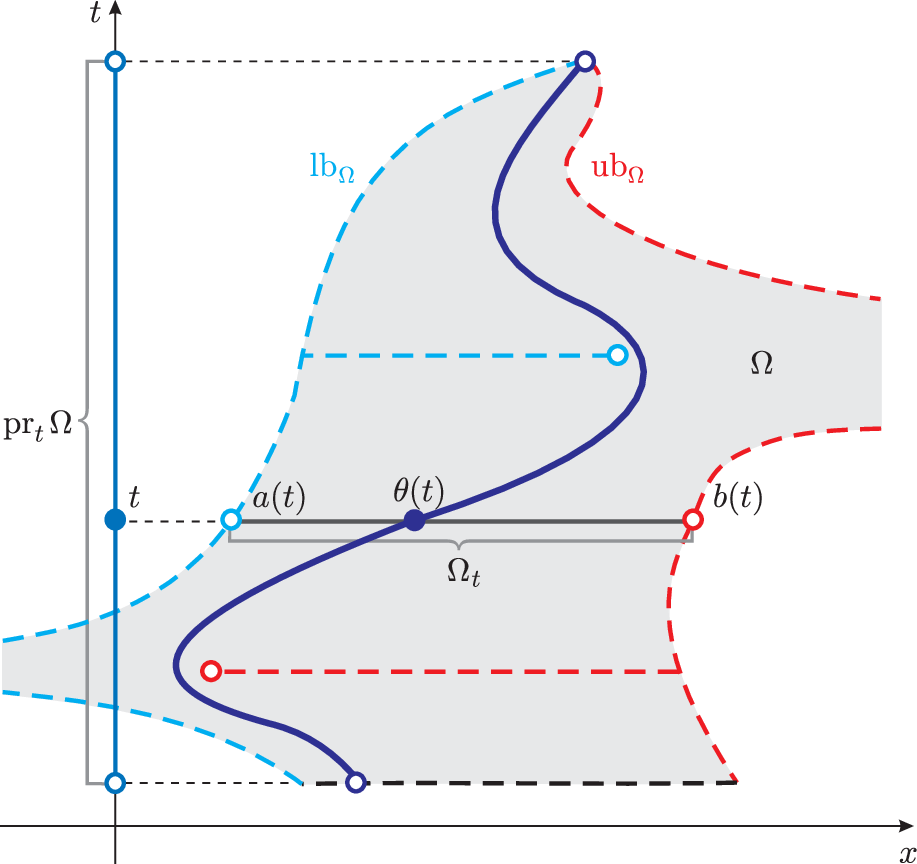}
\caption{Objects related to $x$-simplicity}\label{fig:ObjectRelatedToXSimplicity}
\end{figure}

\begin{proof}
The necessity of the first claim follows from Lemma~\ref{lem:OnOpenSetLB&UB}.
Let us prove its sufficiency.
Suppose that the set~$I$ is open and
the functions~$a$ and~$b$ are upper and lower semi-continuous, respectively.
Fix an arbitrary $(t_0,x_0)\in\Omega$.
Then $a(t_0)<x_0<b(t_0)$ and thus $\varepsilon:=\frac12\min\{x_0-a(t_0),b(t_0)-x_0,1\}>0$.
Hence there exists a neighborhood~$V$ of~$t_0$ in~$I$ such that for all $t\in V$,
$a(t)<x_0-\varepsilon$ and $b(t)>x_0+\varepsilon$,
i.e., the neighborhood~$V\times(x_0-\varepsilon,x_0+\varepsilon)$ of~$t_0$ is contained in~$\Omega$.
Therefore, the set~$\Omega$ is open.

Supposing now that $\Omega$ is $x$-simple, we can cover the open set~$I$ by a family of open subsets $(V_j)_{j\in J}$,
where $J$ is some index set, such that for any $j\in J$ there exists $c_j\in\mathbb R$
with $a(t)<c_j<b(t)$ for any $t\in V_j$.
Let $(\chi^j)_{j\in J}$ be a ${\rm C}^\infty$-partition of unity that is subordinate to $(V_j)_{j\in J}$,
i.e., $(\supp\chi^j)_{j\in J}$ is locally finite and $\supp\chi^j\subseteq V_j$ for any $j\in J$
\cite[Theorem~A.1]{MadsenTornehave1997}.
Then the function $\theta:=\sum_{j\in J}\chi^jc_j$ belongs to ${\rm C}^\infty(I)$,
and for any $t\in I$ we obtain
\[
a(t)=\sum_{j\in J}\chi^j(t)a(t)<\sum_{j\in J}\chi^j(t)c_j=\theta(t)<\sum_{j\in J}\chi^j(t)b(t)=b(t).
\]
(Here non-strict inequalities are clear for any $j\in J$,
but also for any $t\in I$ there exists $j_0\in J$ with $\chi^{j_0}(t)>0$,
and for this term we have $\chi^{j_0}(t)a(t)<\chi^{j_0}(t)c_j<\chi^{j_0}(t)b(t)$,
which implies that strict inequalities hold for the entire sums.)
\end{proof}

\begin{corollary}\label{lem:SimpleConnectnessOfX-SimpleSets}
Any $x$-simple open connected set is simply connected.
\end{corollary}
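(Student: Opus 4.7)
The plan is to show that $\Omega$ is in fact contractible, which is strictly stronger than simple connectedness. Let $\Omega$ be an open connected $x$-simple set, and set $I:=\pr_t\Omega$, $a:={\rm lb}_\Omega$, $b:={\rm ub}_\Omega$, so that $\Omega=\{(t,x)\mid t\in I,\,a(t)<x<b(t)\}$. The first step is to note that since~$\pr_t$ is continuous, the image $I$ of the connected set~$\Omega$ is a connected subset of~$\mathbb R$, and by Lemma~\ref{lem:OnOpenX-SimpleRegion} it is open. Hence $I$ is an open interval (possibly unbounded), in particular contractible.

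Next, Lemma~\ref{lem:OnOpenX-SimpleRegion} furnishes a smooth function $\theta\colon I\to\mathbb R$ with $a(t)<\theta(t)<b(t)$ for all $t\in I$. I would then define the straight-line homotopy
\[
H\colon\Omega\times[0,1]\to\mathbb R^2,\qquad H\big((t,x),s\big):=\big(t,\,(1-s)x+s\theta(t)\big).
\]
The key verification is that $H$ actually takes values in~$\Omega$. This is exactly where $x$-simplicity enters: for fixed $t\in I$ the fiber~$\Omega_t=(a(t),b(t))$ is an interval and hence convex, so since both $x$ and $\theta(t)$ lie in $(a(t),b(t))$, so does any convex combination. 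Therefore $H\big((t,x),s\big)\in\Omega$ for every $s\in[0,1]$.

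Continuity of~$H$ is immediate from continuity of~$\theta$. At $s=0$ we recover the identity on~$\Omega$, while at $s=1$ the map lands in the graph $\Gamma_\theta:=\{(t,\theta(t))\mid t\in I\}$, which is homeomorphic to~$I$ via~$\pr_t$. Composing $H(\cdot,1)$ with a contraction of~$I$ (viewed on~$\Gamma_\theta$) yields a contraction of~$\Omega$ to a point. Thus $\Omega$ is contractible, and in particular simply connected.

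The argument is essentially routine once Lemma~\ref{lem:OnOpenX-SimpleRegion} is in hand; the only nontrivial ingredient is the fiberwise convexity supplied by $x$-simplicity, without which the straight-line homotopy could leave~$\Omega$. I do not anticipate any obstacle beyond recording these observations carefully.
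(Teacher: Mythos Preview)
Your proof is correct and follows essentially the same approach as the paper: use the function~$\theta$ from Lemma~\ref{lem:OnOpenX-SimpleRegion} to retract~$\Omega$ fiberwise onto the graph~$\Gamma_\theta$ via the straight-line homotopy in~$x$ (using convexity of the fibers), then contract the graph using that $I=\pr_t\Omega$ is an interval. The only difference is cosmetic---the paper phrases the same two-stage homotopy as shrinking an arbitrary loop to a point rather than as a global contraction of~$\Omega$.
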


\begin{proof}
Suppose that the set~$\Omega\subseteq\mathbb R^2$ is $x$-simple, open and connected.
In view of Lemma~\ref{lem:OnOpenX-SimpleRegion}, there exists a smooth function~$\theta\in {\rm C}^\infty(\pr_t\Omega)$
whose graph is contained in~$\Omega$. (In fact, the continuity of~$\theta$ is sufficient for the further proof.)
Fix a point $t_0\in\pr_t\Omega$ and consider an arbitrary continuous path $\gamma\colon S^1\to\Omega$,
$S^1\ni\tau\mapsto(\gamma^1(\tau),\gamma^2(\tau))\in\Omega$,
where $S^1$ is the unit circle.
The path $\gamma$ can be shrunken to the point $(t_0,\theta(t_0))$ within~$\Omega$
using the map from the unit disk to~$\Omega$ that is defined~by
\[
(\rho,\tau)\mapsto
\begin{cases}
\Big(\gamma^1(\tau)+(1-2\rho)(t_0-\gamma^1(\tau)),\,\theta\big(\gamma^1(\tau)+(1-2\rho)(t_0-\gamma^1(\tau))\big)\Big),\quad \rho\in\big[0,\tfrac12\big],
\\[1.5ex]
\Big(\gamma^1(\tau),\,\gamma^2(\tau)+2(1-\rho)\big(\theta(\gamma^1(\tau))-\gamma^2(\tau)\big)\Big),\quad \rho\in\big[\tfrac12,1\big],
\end{cases}
\]
where $(\rho,\tau)$ are the `polar' coordinates on the disk, $\rho\in[0,1]$ and $\tau\in S^1$.
(Roughly speaking, we first shrink the path $\gamma$ along the $x$-direction
to the arc $\{(t,\theta(t))\mid t\in\pr_t\gamma(S^1)\}$ of the graph of the function~$\theta$
and then shrink this arc along itself to the point $(t_0,\theta(t_0))$.)
\end{proof}

Note that an open simply connected set is not in general $x$-simple.
An example of such a set is $\mathbb R^2\setminus\big([0,+\infty)\times\{0\}\big)$.

The following lemma introduces an essential technical tool for our further investigation: it identifies,
within any non-$x$-simple set, a certain configuration that will allow us to construct differential operators
on $\Omega$ with `problematic' behavior.

\begin{lemma}\label{lem:OnOpenConnectedNonX-SimpleSets}\looseness=-1
For any open connected non-$x$-simple subset~$\Omega$ of the $(t,x)$-plane,
there exist $\tilde t_0,\varepsilon,\tilde x_1,\tilde x_2\in\mathbb R$ with $\varepsilon>0$, and $\tilde x_1\leqslant\tilde x_2$
such that, up to reflections in~$t$, the set~$\Omega$
does not intersect a closed subset~$\Upsilon$ of the line segment $\{\tilde t_0\}\times[\tilde x_1,\tilde x_2]$
with $(\tilde t_0,\tilde x_1),(\tilde t_0,\tilde x_2)\in\Upsilon$,
and contains the~subset
\begin{gather}\label{eq:SpecialRectangleDomain}
[\tilde t_0-\varepsilon,\tilde t_0]\times[\tilde x_1-\varepsilon,\tilde x_2+\varepsilon]\setminus\Upsilon.
\end{gather}
\end{lemma}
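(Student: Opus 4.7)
The plan is to identify a ``first obstruction'' for $\Omega$: a vertical line $\{\tilde t_0\}\times\mathbb R$ carrying a gap that $\Omega$ surrounds from one side by a full half-rectangle. From the failure of $x$-simplicity I would fix $t_0\in\pr_t\Omega$ such that $\Omega_{t_0}$, an open subset of the line $\{t=t_0\}$, is not connected, and pick $p_1<p_2$ with $(t_0,p_1),(t_0,p_2)\in\Omega$ lying in distinct connected components of $\Omega_{t_0}$; the set $K_0:=\{x\in[p_1,p_2]\colon(t_0,x)\notin\Omega\}$ is then a nonempty compact subset of the open interval $(p_1,p_2)$. Openness of $\Omega$ supplies $\eta>0$ with $(t_0-\eta,t_0+\eta)\times(p_i-\eta,p_i+\eta)\subseteq\Omega$ for $i=1,2$.

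Next I would consider the compact set $F_\eta:=\Omega^c\cap\bigl([t_0-\eta,t_0+\eta]\times[p_1,p_2]\bigr)$, which contains $(t_0,y)$ for every $y\in K_0$. Its projection $\pr_t F_\eta$ to the $t$-axis is compact, so both its minimum and maximum are attained, and up to the reflection $t\mapsto-t$ I may take $\tilde t_0:=\min\pr_t F_\eta\in[t_0-\eta,t_0]$. By minimality, for $t\in[t_0-\eta,\tilde t_0)$ and $y\in[p_1,p_2]$ the pair $(t,y)$ avoids $F_\eta$, hence lies in $\Omega$, giving left-side access by a full rectangular slice. In the generic situation $\tilde t_0>t_0-\eta$, the boxes above yield $(\tilde t_0,p_1),(\tilde t_0,p_2)\in\Omega$, so that $K:=\{x\in[p_1,p_2]\colon(\tilde t_0,x)\notin\Omega\}$ is a nonempty compact subset of $(p_1,p_2)$, and setting $\tilde x_1:=\min K$, $\tilde x_2:=\max K$, $\Upsilon:=\{\tilde t_0\}\times K$ makes $\Upsilon$ a closed subset of $\{\tilde t_0\}\times[\tilde x_1,\tilde x_2]$ containing both endpoints.

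Choosing $\varepsilon>0$ smaller than each of $\tilde t_0-(t_0-\eta)$, $\tilde x_1-p_1$, and $p_2-\tilde x_2$, the required inclusion $[\tilde t_0-\varepsilon,\tilde t_0]\times[\tilde x_1-\varepsilon,\tilde x_2+\varepsilon]\setminus\Upsilon\subseteq\Omega$ reduces to two direct checks: for $t\in[\tilde t_0-\varepsilon,\tilde t_0)$ and $y\in[\tilde x_1-\varepsilon,\tilde x_2+\varepsilon]\subseteq[p_1,p_2]$ the minimality property gives $(t,y)\in\Omega$, while for $t=\tilde t_0$ and $y\in[\tilde x_1-\varepsilon,\tilde x_2+\varepsilon]\setminus K\subseteq[p_1,p_2]\setminus K$ the definition of $K$ gives $(\tilde t_0,y)\in\Omega$. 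Since $\Upsilon\subseteq\Omega^c$ by construction, $\Omega$ indeed does not intersect $\Upsilon$.

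The hard part will be ensuring that $\tilde t_0$ is not forced to the boundary of $[t_0-\eta,t_0+\eta]$ on both sides simultaneously, which would prevent applying the boxes at $\tilde t_0$. Merely enlarging $\eta$ does not always help, since $\Omega^c\cap(\mathbb R\times[p_1,p_2])$ may extend unboundedly in $t$. I would address this by shrinking the bracket $[p_1,p_2]$ toward $K_0$: using a narrower strip $[p_1',p_2']\supset K_0$ together with the connectedness of $\Omega$, one argues that $\Omega^c\cap(\mathbb R\times[p_1',p_2'])$ must be bounded in $t$ on at least one side of $t_0$, since otherwise a closed set of full $t$-projection inside a compact $x$-strip would furnish a barrier separating $\Omega$ into disjoint pieces and contradict connectedness. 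The previous construction then succeeds with this refined bracket.
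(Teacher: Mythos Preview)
Your ``generic'' case is fine, and the idea of seeking the leftmost (or rightmost) obstruction is the right instinct. The gap is entirely in the ``hard part'', and it is real rather than cosmetic.

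First, the barrier claim is false as stated. Take $\Omega^c=\bigl([0,\infty)\times\{1\}\bigr)\cup\bigl((-\infty,0]\times\{-1\}\bigr)$. Then $\Omega$ is open and connected, yet $\Omega^c\cap(\mathbb R\times[p_1',p_2'])=\Omega^c$ has full $t$-projection for any bracket with $p_1'<-1<1<p_2'$. A closed set in a horizontal strip that projects onto all of~$\mathbb R$ need not separate the plane.

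Second, and more seriously, even when the boundedness you want does hold, ``the previous construction then succeeds with this refined bracket'' does not follow. Take $\Omega=\mathbb R^2\setminus\{(t,t):|t|\le1\}$, which is open, connected, and not $x$-simple. At $t_0=0$ one has $K_0=\{0\}$. Any refined bracket $[-\delta,\delta]$ gives $\Omega^c\cap(\mathbb R\times[-\delta,\delta])=\{(t,t):|t|\le\delta\}$, which is bounded. But for your construction to produce $\tilde t_0=\min\pr_tF_\eta>-\eta$ you need $\eta>\delta$, whereas the boxes $(-\eta,\eta)\times(\pm\delta-\eta,\pm\delta+\eta)$ then contain $(0,0)\in\Omega^c$. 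If you proceed anyway with $\eta>\delta$, you obtain $\tilde t_0=-\delta$ and $(\tilde t_0,p_1')=(-\delta,-\delta)\in\Omega^c$, so $\tilde x_1=p_1'$ and no positive~$\varepsilon$ satisfies your inequalities. The correct $\tilde t_0$ here is $-1$, the endpoint of the segment; your procedure, being anchored at $t_0=0$, never reaches it. Shrinking the bracket only slides the difficulty along the segment without resolving it.

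What is missing is precisely a mechanism for \emph{following the obstruction} away from the initial~$t_0$. The paper achieves this by taking a path $\gamma\subset\Omega$ joining the two sides of the gap, closing it up with the vertical segment, and invoking the Jordan curve theorem to obtain a bounded region $C\cup\Gamma$; then $\tilde t_0$ is the infimum of $\pr_t\bigl((C\cup\Gamma)\setminus\Omega\bigr)$. The compactness of $C\cup\Gamma$ replaces your hoped-for boundedness of $\Omega^c$ in a strip, and the fact that $\gamma\subset\Omega$ gives the buffer on the left and at the top and bottom that your boxes were meant to supply. Your argument can be repaired along these lines, but not without introducing the path and a Jordan-type separation step.
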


\begin{proof}
Since the set~$\Omega$ is not $x$-simple, there exists $t_0\in\pr_t\Omega$ such that $\Omega_{t_0}$ is not connected,
i.e.,  for some $x_0,x_1,x_2,x_3\in\mathbb R$ with $x_0<x_1\leqslant x_2<x_3$ we have
$[x_0,x_1),(x_2,x_3]\subset\Omega_{t_0}$ and $x_1,x_2\notin\Omega_{t_0}$;
see Figure~\ref{fig:OnOpenConnectedNonX-SimpleSets}.
Since the set~$\Omega$ is connected,
there exists a (continuous) path $\gamma\colon[0,1]\to\Omega$ with $\gamma(0)=(t_0,x_0)$ and $\gamma(1)=(t_0,x_3)$.
Without loss of generality, we can assume the map~$\gamma$ injective.%
\footnote{%
Using the openness of~$\Omega$, we can additionally assume that the image of~$\gamma$ is a polygonal line,
but this is not essential for the present proof.
}
Let
\[
\tau_0=\sup\big\{\tau\in[0,1]\mid\gamma(\tau)\in\{t_0\}\times[x_0,x_1)\big\},\quad
\tau_1=\inf\big\{\tau\in[0,1]\mid\gamma(\tau)\in\{t_0\}\times(x_1,x_3]\big\}.
\]
Replacing $\gamma$ by its subpath $\gamma\big|_{[\tau_0,\tau_1]}$,
$(t_0,x_0)$ by $\gamma(\tau_0)$,
$(t_0,x_3)$ by $\gamma(\tau_1)$ and
$x_2$ by the supremum of the relative complement of $\Omega_{t_0}$ in the new interval $(x_0,x_3)$,
we can also assume that $(t_0,x_0)$ and $(t_0,x_3)$
are the only common points of $\gamma([0,1])$ with $\{t_0\}\times\mathbb R$.
We complete $\gamma([0,1])$ by $\{t_0\}\times(x_0,x_3)$ to a simple closed curve, which we denote by~$C$.
According to the Jordan curve theorem, this curve divides the $(t,x)$-plane
into the (bounded) interior~$\Gamma$ and the (unbounded) exterior~$\tilde\Gamma$.
Up to reflections in~$t$, we can assume that there exists a neighborhood~$U$ of the point $(t_0,x_1)$ such that
$U\cap\Gamma$ and $U\cap\tilde\Gamma$ contain only points with negative and positive values of $t-t_0$, respectively.
Set \[\tilde t_0=\inf\{t\in\mathbb R\mid \exists\ x\in\mathbb R\colon (t,x)\in(C\cup\Gamma)\setminus\Omega\}.\]

\begin{figure}
\centering
\includegraphics[width=1.\linewidth]{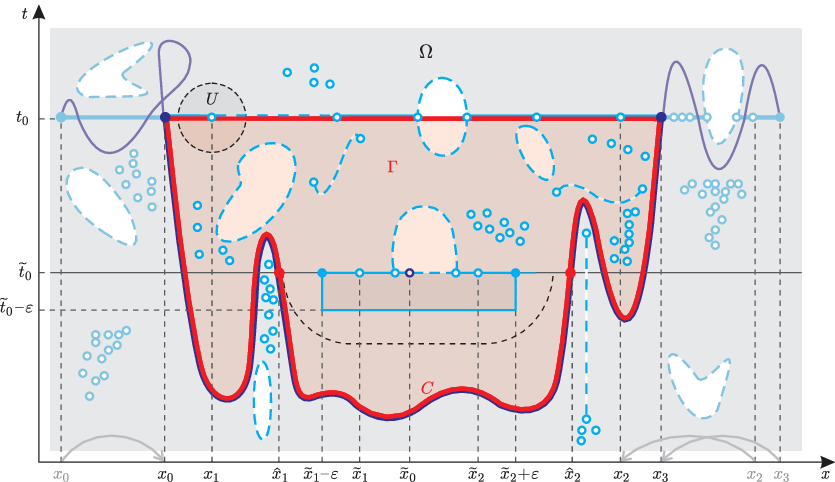}
\caption{Existence of a rectangle with a punctured boundary in an open connected non-$x$-simple set}\label{fig:OnOpenConnectedNonX-SimpleSets}
\end{figure}

If $\tilde t_0=t_0$, then we set $\tilde x_1=x_1$, $\tilde x_2=x_2$ and
$\varepsilon=\frac12\mathop{\rm dist}\big(\{\tilde t_0\}\times[\tilde x_1,\tilde x_2],\gamma([0,1])\big).$ 
(The distance is measured between disjoint compact sets and thus $\varepsilon>0$.)

\looseness=-1
Otherwise, the compactness of $(C\cup\Gamma)\setminus\Omega$ implies that
there exists $\tilde x_0\in\mathbb R$ with $(\tilde t_0,\tilde x_0)\in\Gamma\setminus\Omega$.
Since the set~$\Gamma$ is bounded, the line $t=\tilde t_0$ intersects the curve~$C$
in at least one point with $x$-coordinate less than~$\tilde x_0$ and
in at least one point with $x$-coordinate greater than~$\tilde x_0$.
Therefore the values
\begin{gather*}
\hat x_1=\sup\big\{x\in\mathbb R\mid (\tilde t_0,x)\in\gamma([0,1]),\, x<\tilde x_0\big\}, \quad
\tilde x_1=\inf\big\{x\in\mathbb R\mid (\tilde t_0,x)\notin\Omega,\, x>\hat x_1\big\},
\\
\hat x_2=\inf\big\{x\in\mathbb R\mid (\tilde t_0,x)\in\gamma([0,1]),\, x>\tilde x_0\big\}, \quad
\tilde x_2=\sup\big\{x\in\mathbb R\mid (\tilde t_0,x)\notin\Omega,\, x<\hat x_2\big\}
\end{gather*}
are well defined
as the supremum (resp.\ infimum) of  a nonempty set that is bounded from above (resp.\ below),
and $\tilde x_1\leqslant\tilde x_2$.
The line segment $\{\tilde t_0\}\times(\hat x_1,\hat x_2)$ does not intersect the curve~$C$
and contains the point $(\tilde t_0,\tilde x_0)$, which belongs to the interior~$\Gamma$.
Therefore, this segment is contained in~$\Gamma$.
The value of~$\varepsilon$ is defined as in the previous case.

The chosen values of $\tilde t_0$, $\varepsilon$, $\tilde x_1$ and $\tilde x_2$ then satisfy the claimed properties.
\end{proof}

\begin{definition}\label{def:XSimplePiece}
If there exists an open interval~$I$ of the $t$-axis such that
the intersection of a subset~$U$ of the $(t,x)$-plane by the strip $I\times\mathbb R$
has an $x$-simple connected component,
then we call this component an \emph{$x$-simple piece} of~$U$.
\end{definition}

\begin{remark}\label{rem:OnSpecificOpenSetsWithNoXSimplePieces}
Suppose that for an open set~$\Omega$ of the $(t,x)$-plane
the subset~$J$ of~$t$'s from $\pr_t\Omega$ with connected $\Omega_t$'s is dense in~$\pr_t\Omega$.
Then the set~$\Omega$ contains no $x$-simple pieces if and only if
the complement of~$J$ in $\pr_t\Omega$ is also dense in~$\pr_t\Omega$.
\end{remark}

\begin{figure}
\centering
\includegraphics[width=.5\linewidth]{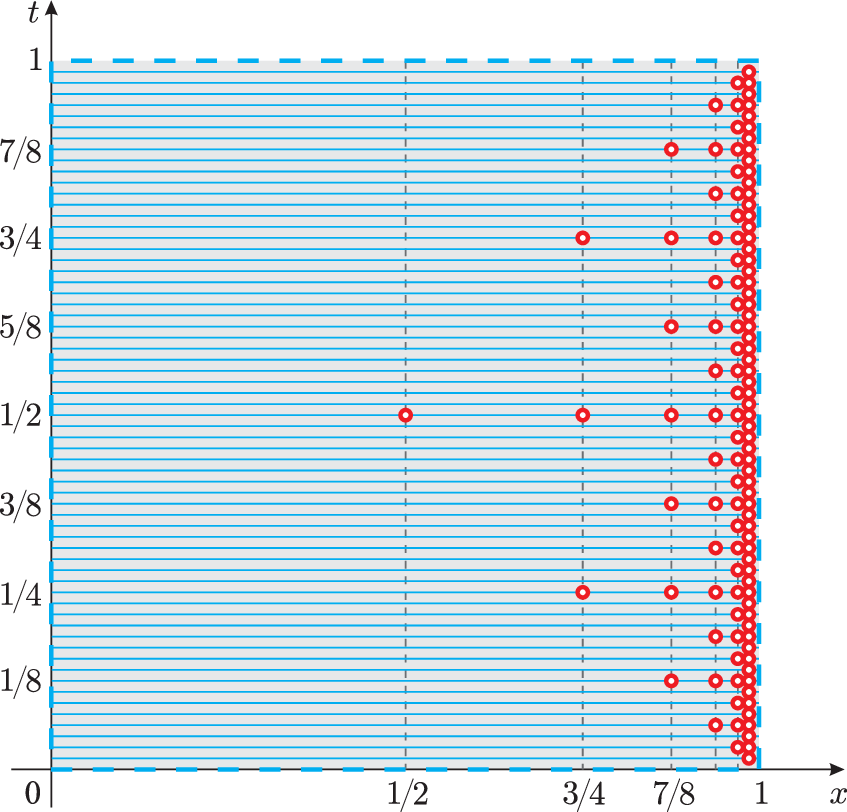}
\caption{``Infinitely punctured open square'' without $x$-simple pieces}\label{fig:InfinitelyPuncturedOpenSquare}
\end{figure}

\begin{example}\label{ex:InfinitelyPuncturedOpenSquare}
The set $\Omega:=\big((0,1)\times(0,1)\big)\setminus\big\{(2^{-k}l,1-2^{-k}),\,l=1,\dots,2^k-1,\,k\in\mathbb N\big\}$
is open, connected, and contains no $x$-simple pieces.
See Figure~\ref{fig:InfinitelyPuncturedOpenSquare}.
\end{example}

Open $x$-simple regions naturally arise in the context
of fundamental sets of solutions of linear ordinary differential equations depending on a parameter.
Moreover, several properties of such equations depend on
whether the underlying domain~$\Omega$ of the independent variable~$x$ and the parameter~$t$ is $x$-simple
and how the $x$-simplicity is combined with the connectedness,
in particular, whether all the connected components of~$\Omega$ or at least some of them are $x$-simple
or whether the domain~$\Omega$ has $x$-simple pieces.
These properties include
\begin{itemize}\itemsep=0ex
\item the existence of fundamental sets of solutions and of sets of solutions with nonvanishing Wronskians,
\item the relation between these two kinds of solution sets,
\item the existence of solutions that are not identically zero for such homogeneous equations and
\item the general existence of solutions for such inhomogeneous equations.
\end{itemize}
See Figure~\ref{fig:VariationsOnXSimplicityAndConnectedness}
for some variants of combining $x$-simplicity, connectedness and simple connectedness.

\begin{figure}
\centering
\includegraphics[width=.9\linewidth]{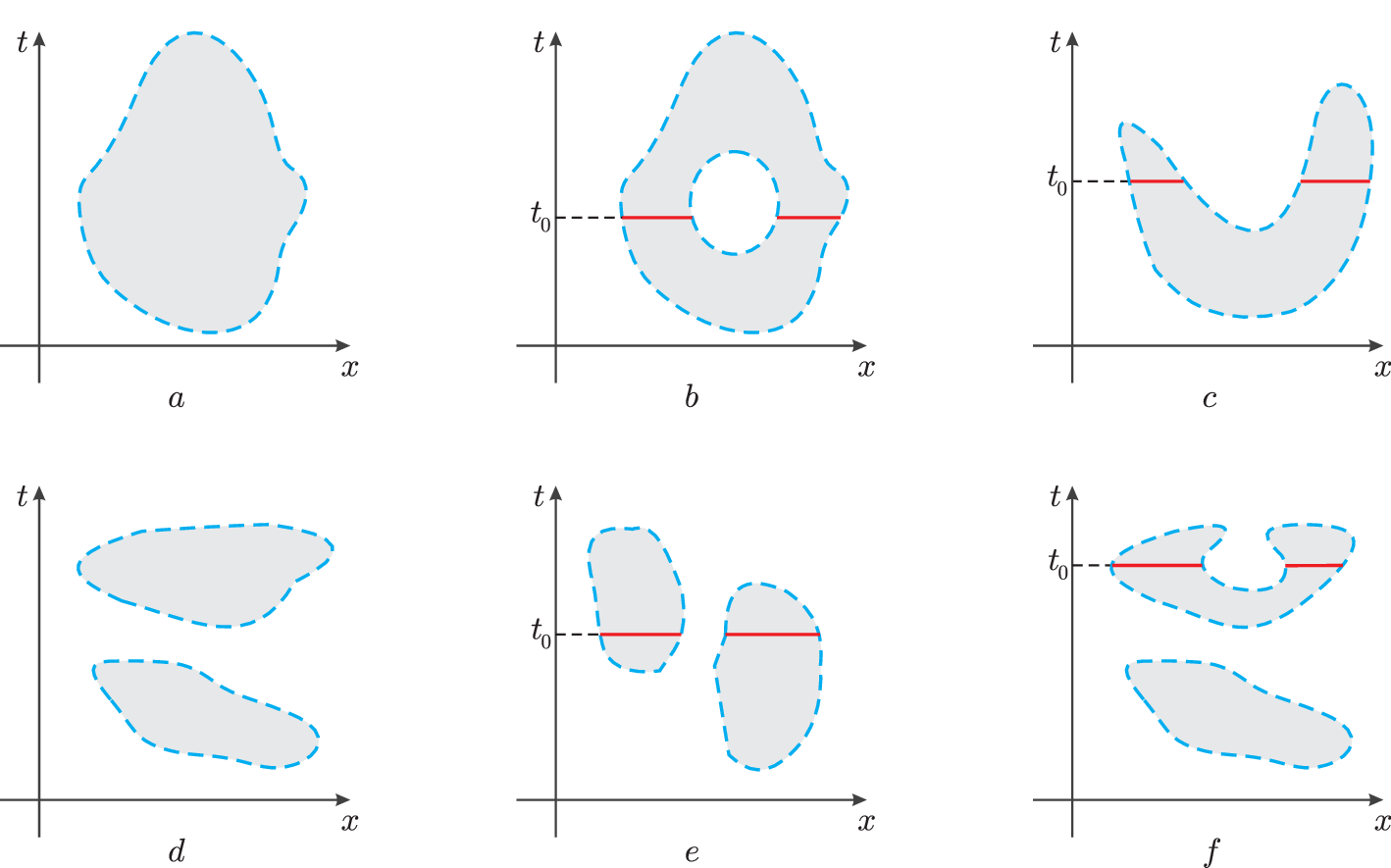}
\caption{%
Variants of combining $x$-simplicity, connectedness and simple connectedness:
({\it a\tiny\,}) connected, $x$-simple and thus simply connected set;
({\it b\tiny\,}) connected, non-$x$-simple and thus multiply connected set;
({\it c\tiny\,}) non-$x$-simple, simply connected set;
({\it d\tiny\,}) disconnected $x$-simple set (each of its connected components is necessarily $x$-simple and thus simply connected);
({\it e\tiny\,}) disconnected non-$x$-simple set whose connected components are $x$-simple and thus simply connected;
({\it f\tiny\,}) disconnected non-$x$-simple set having a non-$x$-simple connected component.
}\label{fig:VariationsOnXSimplicityAndConnectedness}
\end{figure}

\section{Fundamental sets of solutions of homogeneous linear ordinary differential equations depending on a parameter}
\label{sec:FundamentalSolutionSetsOfLinODEsWithParameter}

Given a homogeneous linear $p$th order ordinary differential equation with the independent variable~$x$ and the parameter~$t$
and with continuous coefficients defined on an open set~$\Omega\subseteq\mathbb R^2$ of~$(t,x)$,
the question is whether there exist $p$ continuous solutions%
\footnote{By this we mean solutions from $C^p_x(\Omega)$, cf.\ the notations agreed upon in Section \ref{sec:intro}.}
of this equation with nonvanishing Wronskian on~$\Omega$.
In general, the answer is negative, as is illustrated by the following example.

\begin{example}\label{ex:1stOrderHomLinODEWithVanishingWronskian}
Consider the linear homogeneous first-order ordinary differential equation
\[
\mathcal P\colon\quad u_x=\frac u{x^2+t^2}\quad\mbox{on}\quad \Omega=\mathbb R^2\setminus\{(0,0)\}.
\]
For each fixed~$t$, its general solution is
\[
u=C\exp\left(\frac1t\arctan\frac xt\right)\quad\mbox{if}\quad t\ne0, \qquad
u=C\exp\left(-\frac1x\right)\quad\mbox{if}\quad t=0,
\]
where $C$ is an arbitrary constant.
This solution is well defined on the entire $\Omega_t=\mathbb R$ if $t\ne0$,
and should be considered separately on each $x$-semiaxis, $\mathbb R_{+}$ and $\mathbb R_{-}$, if $t=0$.
The functions
\begin{gather*}
u=\zeta^+(t)\exp\left(\frac1t\arctan\frac xt\right),\quad t>0,\ x\in\mathbb R,
\\[.5ex]
u=\zeta^-(t)\exp\left(\frac1t\arctan\frac xt\right),\quad t<0,\ x\in\mathbb R,
\end{gather*}
where the parameter function~$\zeta^+$ (resp.~$\zeta^-$) runs through
${\rm C}(\mathbb R_{+})$ (resp.\ ${\rm C}(\mathbb R_{-})$\,),
represent the general solutions of the equation~$\mathcal P$
on the domains $\mathbb R_{+}\times\mathbb R$ and $\mathbb R_{-}\times\mathbb R$, respectively.
The question is whether there exists a solution of~$\mathcal P$ that is continuous and nonvanishing on the entire~$\Omega$.
Suppose that this is the case, and that $u=\varphi(t,x)$ is such a solution.
Define the function $\zeta(t):=\varphi(t,-1)$, $t\in\mathbb R$.
We have $\zeta\in {\rm C}(\mathbb R)$ and
\[
\varphi(t,x)=
\begin{cases}
\zeta(t)\exp\left(\dfrac1t\arctan\dfrac xt+\dfrac\pi{2|t|}-\dfrac{\arctan t}t\right) \quad\mbox{if}\quad t\ne0,\ x\in\mathbb R,
\\[2.5ex]
\zeta(0)\exp\left(-\dfrac1x-1\right) \quad\mbox{if}\quad t=0,\ x\in\mathbb R_-.
\end{cases}
\]
Here we use the equality
\begin{gather}\label{eq:IdentityWithArctan}
\dfrac1t\arctan\dfrac xt-\dfrac\pi{2|t|}\sgn x=-\dfrac1t\arctan\dfrac tx \quad\mbox{if}\quad t\ne0,\ x\ne0.
\end{gather}
The right hand side function is continuous on $\mathbb R^2\setminus\big(\{0\}\times[0,+\infty)\big)$
but cannot be continuously extended to~$\Omega$ if $\zeta(0)\ne0$ since for $x>0$ and $t\to0$ we obtain
\[
\zeta(t)\exp\left(\dfrac1t\arctan\dfrac xt+\dfrac\pi{2|t|}-\dfrac{\arctan t}t\right)=
\zeta(t)\exp\left(-\dfrac1t\arctan\dfrac tx+\dfrac\pi{ |t|}-\dfrac{\arctan t}t\right)\to\infty,
\]
where the sign of infinity coincides with the sign of $\zeta(0)$.
In other words, the equation~$\mathcal P$ has
\emph{no (continuous) solution that is nonzero on the entire domain}~$\Omega$.
Moreover, any solution of this equation on~$\Omega$ vanishes on the half-axis $\{0\}\times(-\infty,0)$,
and the corresponding function~$\zeta$ is $O(e^{-\pi/|t|})$ as $t\to 0$.
Consider the solution
\[
\varphi^1(t,x)=
\begin{cases}
\exp\left(\dfrac1t\arctan\dfrac xt-\dfrac\pi{2|t|}+\dfrac{\arctan t}t\right) \quad\mbox{if}\quad t\ne0,\ x\in\mathbb R,
\\[2.5ex]
\exp\left(\dfrac1x-1\right) \quad\mbox{if}\quad t=0,\ x\in\mathbb R_+,
\\[2.5ex]
0 \quad\mbox{if}\quad t=0,\ x\in\mathbb R_-.
\end{cases}
\]
Since $\varphi^1(t,1)=1$, any solution~$\varphi$ of the equation~$\mathcal P$ on~$\Omega$
can be represented as $\varphi=\tilde\zeta\varphi^1$, where $\tilde\zeta:=\varphi(t,1)\in {\rm C}(\mathbb R)$.
In this sense the function~$\varphi^1$ constitutes a fundamental set of solutions of this equation on~$\Omega$.
\end{example}

\begin{definition}\label{def:FundamentalSolutionSetOfODEWithParameter}
Given a linear ordinary differential equation~$\mathcal P$: $Pu=0$ on an open subset~$\Omega$ of the $(t,x)$-plane,
where $P\in\DO(\Omega)$ with $\ord P=p$ and $t$ plays the role of a parameter,
we say that functions $\varphi^s\in {\rm C}^p_x(\Omega)$, $s=1,\dots,p$,
satisfying this equation constitute
\begin{itemize}
\item
a \emph{fundamental set of solutions} of~$\mathcal P$ on~$\Omega$
if any solution~$u$ of~$\mathcal P$ can uniquely be represented in the form $u=\zeta^s\varphi^s$
for certain functions $\zeta^s\in {\rm C}(\pr_t\Omega)$;
\item
a \emph{locally fundamental set of solutions} of~$\mathcal P$ on~$\Omega$
if each point of~$\Omega$ has a neighbourhood $U\subseteq\Omega$
such that the restriction of any solution~$u$ of~$\mathcal P$ to~$U$, $u\big|_U$,
can uniquely be represented in the form $u\big|_U=\zeta^s\,\varphi^s\big|_U$
for certain functions $\zeta^s\in {\rm C}(\pr_tU)$.
\end{itemize}
\end{definition}

\begin{lemma}\label{lem:OnWronskianOfFundamentalSolutionSetOfODEWithParameter}
Any solutions $\varphi^s\in {\rm C}^p_x(\Omega)$, $s=1,\dots,p$, of an equation $Pu=0$ with $P\in\DO(\Omega)$ and $\ord P=p$
that satisfy the condition $\mathrm W(\varphi^1,\dots,\varphi^p)\ne0$ on~$\Omega$
constitute a locally fundamental set of solutions of this equation.
\end{lemma}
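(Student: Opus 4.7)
The plan is to produce an open cover of $\Omega$ by open rectangles $U=I\times J\subset\Omega$ (one around each point of $\Omega$) and show that on every such $U$ any solution $u$ of $Pu=0$ admits a unique representation $u|_U=\zeta^s\,\varphi^s|_U$ with $\zeta^s\in{\rm C}(\pr_tU)={\rm C}(I)$. Such rectangles evidently cover $\Omega$, so this suffices to exhibit $(\varphi^1,\dots,\varphi^p)$ as a locally fundamental set of solutions in the sense of Definition~\ref{def:FundamentalSolutionSetOfODEWithParameter}.

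For fixed $t\in I$, I would invoke the classical single-variable theory recalled in Section~\ref{sec:intro}: on the interval $J$ the functions $\varphi^s(t,\cdot)$ satisfy the scalar linear ODE $\sum_{i=0}^{p} g^i(t,\cdot)u_i=0$ of order $p$ with nowhere vanishing leading coefficient $g^p(t,\cdot)$, and by hypothesis their Wronskian is nonzero throughout $J$. Hence $(\varphi^1(t,\cdot),\dots,\varphi^p(t,\cdot))$ is a classical fundamental system on $J$, and the restriction $u(t,\cdot)$ can be written uniquely as $c^s(t)\,\varphi^s(t,\cdot)$ for real constants $c^s(t)$ depending on $t$.

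It remains to verify that $t\mapsto c^s(t)$ is continuous on $I$; this is the only substantive point. To this end I would fix any $x_1\in J$ and recover the $c^s(t)$ from the linear system
\[
u_{s'-1}(t,x_1)=c^s(t)\,\varphi^s_{s'-1}(t,x_1),\qquad s'=1,\dots,p,
\]
whose coefficient matrix is the Wronskian matrix of the $\varphi^s$ at $(t,x_1)$ and is invertible by assumption. Cramer's rule then exhibits each $c^s(t)$ as a ratio of determinants whose entries $u_{s'-1}(t,x_1)$ and $\varphi^s_{s'-1}(t,x_1)$ depend continuously on $t$ (since $u,\varphi^s\in{\rm C}^p_x(\Omega)$), with denominator $\mathrm W(\varphi^1,\dots,\varphi^p)(t,x_1)\ne 0$. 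Setting $\zeta^s:=c^s$ yields the required representation on $U$, and uniqueness is immediate from the same linear-algebra argument applied at any point of $U$. Everything else being pointwise in $t$, no deeper topological input on $\Omega$ is expected to be needed; the classical Wronskian theory together with Cramer's rule does all the work once the rectangle $U$ is fixed.
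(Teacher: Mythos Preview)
Your proof is correct and follows essentially the same approach as the paper: the paper covers $\Omega$ by open balls $B_{\varepsilon_j}(z_j)$ (rather than rectangles) centered at points $z_j=(t_j,x_j)\in\Omega$, and defines the coefficients $\zeta^{js}$ by solving the same linear system $\zeta^{js}(t)\,\varphi^s_{s'-1}(t,x_j)=u_{s'-1}(t,x_j)$, $s'=1,\dots,p$, at the center. Your exposition is simply more explicit about invoking Cramer's rule to justify the continuity of the~$\zeta^s$.
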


\begin{proof}
It suffices to consider a covering of~$\Omega$ by balls $U_j:=B_{\varepsilon_j}(z_j)\subseteq\Omega$, $j\in J$,
where $J$ is some index set, and $z_j=(t_j,x_j)\in\Omega$.
For an arbitrary solution~$u$ of the equation $Pu=0$ and for each of these balls,
we have the representation \smash{$u\big|_{U_j}=\zeta^{js}\varphi^s\big|_{U_j}$},
where the functions \smash{$\zeta^{js}\in C\big((t_j-\varepsilon_j,t_j+\varepsilon_j)\big)$}
are defined, for each $t\in(t_j-\varepsilon_j,t_j+\varepsilon_j)$,
as solutions of the system $\zeta^{js}(t)\varphi^s_{s'-1}(t,x_j)=u_{s'-1}(t,x_j)$, $s'=1,\dots,p$.
\end{proof}

\begin{theorem}\label{thm:OnFundSolutionSetOfLinODEsWithParameter}
Given an open subset~$\Omega$ of the $(t,x)$-plane, the following are equivalent:
\begin{itemize}\itemsep=0ex
\item[{\rm({\it  i})}] Any homogeneous linear ordinary differential equation $Pu=0$ with $P\in\DO(\Omega)$ 
admits a fundamental set of solutions on~$\Omega$ with Wronskian nonvanishing on the entire~$\Omega$.
\item[{\rm({\it ii})}] $\Omega$ is an $x$-simple region.
\end{itemize}
\end{theorem}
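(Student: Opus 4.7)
For (ii)$\Rightarrow$(i), the construction is the classical Cauchy-problem theory, done with parameters. Using Lemma~\ref{lem:OnOpenX-SimpleRegion} I would fix a smooth selector $\theta\colon I\to\mathbb R$ ($I:=\pr_t\Omega$) with graph contained in~$\Omega$. Given $P\in\DO(\Omega)$ of order~$p$, rewrite $Pu=0$ as the first-order linear system~\eqref{eq:ode_system_no_parameter}--\eqref{eq:matrix_system} and, for each $s=1,\dots,p$, let $\varphi^s$ be the solution on the slice $\Omega_t=(a(t),b(t))$ of the Cauchy problem with initial data $\p_x^{s'-1}\varphi^s(t,\theta(t))=\delta^{s'}_s$ imposed at $x=\theta(t)$. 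Standard continuous dependence of ODE solutions on parameters, combined with the smoothness of~$\theta$, yields $\varphi^s\in {\rm C}^p_x(\Omega)$. By construction $\mathrm W(\varphi^1,\dots,\varphi^p)(t,\theta(t))=1$, and applying the Liouville--Ostrogradski formula slice by slice gives $\mathrm W\ne0$ throughout~$\Omega$. For an arbitrary solution~$u$ of~$\mathcal P$, the functions $\zeta^s(t):=\p_x^{s-1}u(t,\theta(t))$ lie in ${\rm C}(I)$, and uniqueness for the Cauchy problem on each interval~$\Omega_t$ forces $u=\zeta^s\varphi^s$ on all of~$\Omega$; uniqueness of the $\zeta^s$ follows from $\mathrm W\ne0$.

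For the converse (i)$\Rightarrow$(ii), I would argue the contrapositive by exhibiting, whenever $\Omega$ is not $x$-simple, a single first-order operator $P\in\DO(\Omega)$ for which $Pu=0$ has no fundamental set on~$\Omega$. Split according to the source of non-$x$-simplicity. \emph{Case A}: some connected component~$\Omega'$ of~$\Omega$ is itself not $x$-simple. Lemma~\ref{lem:OnOpenConnectedNonX-SimpleSets} then supplies the configuration $[\tilde t_0-\varepsilon,\tilde t_0]\times[\tilde x_1-\varepsilon,\tilde x_2+\varepsilon]\setminus\Upsilon\subset\Omega'$, with $\Upsilon\subset\{\tilde t_0\}\times[\tilde x_1,\tilde x_2]$ avoided by~$\Omega$. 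I would choose a smooth coefficient~$h(t,x)$ modelled on $h=1/(x^2+t^2)$ of Example~\ref{ex:1stOrderHomLinODEWithVanishingWronskian}, recentred at an endpoint of~$\Upsilon$, so that $\int_{\tilde x_1-\varepsilon/2}^{\tilde x_2+\varepsilon/2}h(t,x)\,{\rm d}x\to+\infty$ as $t\to\tilde t_0^-$, and extend~$h$ smoothly to all of~$\Omega$ (setting $h\equiv0$ on the remaining components and gluing by a partition of unity). Repeating the analysis of Example~\ref{ex:1stOrderHomLinODEWithVanishingWronskian} locally, any continuous solution of $u_x=hu$ on~$\Omega$ is forced to vanish at some point of $\{\tilde t_0\}\times[\tilde x_1-\varepsilon,\tilde x_2+\varepsilon]$, and uniqueness for the first-order linear ODE on each slice then propagates this zero along the whole slice. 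Since the Wronskian for $p=1$ is the solution itself, no fundamental set with nonvanishing Wronskian can exist on~$\Omega$. \emph{Case B}: every connected component of~$\Omega$ is $x$-simple, but two of them, $\Omega^{(1)}$ and $\Omega^{(2)}$, have overlapping $t$-projections. Take $P=\p_x$ and the solution that equals~$1$ on~$\Omega^{(1)}$ and~$0$ on~$\Omega^{(2)}$; in any representation $u=\zeta(t)\varphi^1(t,x)$ with $\varphi^1\ne0$, restricting to any~$t$ in the overlap forces $\zeta(t)$ to be simultaneously nonzero and zero, a contradiction.

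The routine direction is (ii)$\Rightarrow$(i). The technical heart of the converse is Case~A: the Example handles the specific puncture $\{(0,0)\}$ in the plane, but here $\Upsilon$ is an arbitrary closed subset of a line segment, and the rectangle supplied by Lemma~\ref{lem:OnOpenConnectedNonX-SimpleSets} only covers the $t<\tilde t_0$ side. I expect the main obstacle will be to reduce the general configuration to a neighbourhood of an endpoint $(\tilde t_0,\tilde x_1)$ or $(\tilde t_0,\tilde x_2)$, where the open set~$\Omega'$ contains a half-disk whose geometry genuinely mimics the Example, making the blow-up computation essentially local; the rest is a partition-of-unity extension and global book-keeping of continuity and nonvanishing.
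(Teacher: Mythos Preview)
Your plan is essentially the paper's proof: same two-case split for the contrapositive, same use of Lemma~\ref{lem:OnOpenX-SimpleRegion} and the Cauchy problem along the graph of~$\theta$ for the easy direction, same invocation of Lemma~\ref{lem:OnOpenConnectedNonX-SimpleSets} and a coefficient singular at~$(\tilde t_0,\tilde x_1)$ for Case~A.

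Your anticipated obstacle in Case~A is illusory, however, and the over-engineering can be dropped. The coefficient $h(t,x)=\big((x-\tilde x_1)^2+(t-\tilde t_0)^2\big)^{-1}$ is already defined on all of~$\Omega$ (the point $(\tilde t_0,\tilde x_1)\in\Upsilon$ lies outside~$\Omega$), so no partition-of-unity gluing is required; and the one-sided rectangle from Lemma~\ref{lem:OnOpenConnectedNonX-SimpleSets} is exactly what you need, since the blow-up is as $t\to\tilde t_0{}^-$. No further reduction of the configuration is necessary. The paper in fact streamlines this step by applying the Liouville--Ostrogradski formula directly to the Wronskian: if $\mathrm W(\varphi^1,\dots,\varphi^p)(\tilde t_0,\tilde x_1-\varepsilon)\ne0$, then $\mathrm W(t,\tilde x_2+\varepsilon)=\mathrm W(t,\tilde x_1-\varepsilon)\exp\big({-}\!\int_{\tilde x_1-\varepsilon}^{\tilde x_2+\varepsilon}(g^{p-1}/g^p)\,{\rm d}x\big)\to\infty$ as $t\to\tilde t_0{}^-$, contradicting continuity. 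This handles all orders~$p$ and any~$P$ whose subleading-to-leading ratio has the right blow-up, and bypasses tracking individual solutions; your first-order version is the $p=1$ instance and suffices. One minor slip: the zero only propagates along the connected component of the slice, not ``the whole slice''---but a single zero of the Wronskian already negates~(i), so the propagation step is unnecessary anyway. In Case~B the paper shows the obstruction for \emph{every} $P\in\DO(\Omega)$, not just $\p_x$, but again one~$P$ is enough for the theorem.
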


\begin{proof} {\rm({\it ii})}${}\Rightarrow{}${\rm({\it i})}: 
Consider an arbitrary $P\in\DO(\Omega)$.
In view of Lemma~\ref{lem:OnOpenX-SimpleRegion}, there exists a function $\theta\in {\rm C}^\infty(I)$ with $I=\pr_t\Omega$
such that its graph is contained in~$\Omega$.
For each $t\in I$ and $s\in\{1,\dots,p\}$, we consider the initial value problem for the equation $Pu=0$ on $\Omega_t$
with the initial conditions $u_{s'-1}=\delta_{ss'}$, $s'=1,\dots,p$, at $x=\theta(t)$ and then vary~$t$ through~$I$.
Here $\delta_{ss'}$ is the Kronecker delta.
The collection of the solutions $\varphi^s\colon\Omega\to\mathbb R$ of the above problems
then satisfies the required properties.

{\rm({\it i})}${}\Rightarrow{}${\rm({\it ii})}:
Supposing that the open set~$\Omega$ is not $x$-simple, we distinguish two cases.

First, we assume that each connected component of $\Omega$ is an $x$-simple set but the entire~$\Omega$ is not.
This means that there are connected components~$U_1$ and~$U_2$ of $\Omega$
with overlapping projections $\pr_tU_1$ and $\pr_tU_2$ to the $t$-axis.
Suppose that for some $P\in\DO(\Omega)$ of some order~$p$
the equation~$\mathcal P$: $Pu=0$ possesses a fundamental set of solutions $\varphi^1$, \dots, $\varphi^p$ on~$\Omega$.
In view of the previous part of the proof, this equation possesses sets of $p$~solutions with nonzero Wronskians
on each connected component of $\Omega$ and hence it does on the entire~$\Omega$.
Therefore the Wronskian of any fundamental set of solutions of~$\mathcal P$ does not vanish on~$\Omega$.
Using Lemma~\ref{lem:OnOpenX-SimpleRegion}, we fix a function $\theta\in {\rm C}^\infty(\pr_tU_1)$
whose graph is contained in~$U_1$.
There is a solution~$\psi$ of~$\mathcal P$ such that $\psi=1$ on this graph and $\psi=0$ on~$U_2$.
By assumption, $\psi=\zeta^s\varphi^s$ for some functions $\zeta^s\in {\rm C}(\pr_t\Omega)$.
These functions vanish on~$\pr_tU_2$ since $\psi\equiv 0$ on~$U_2$
and thus the solution~$\psi$ vanishes on~the intersection
of the strip $\{(t,x)\mid t\in\pr_tU_2,\,x\in\mathbb R^2\}$ with~$\Omega$.
But this contradicts the fact that $\psi(t,\theta(t))=1$ for $t\in\pr_tU_1\cap\pr_tU_2$.
Therefore, for any $P\in\DO(\Omega)$ with such an $\Omega$,
the equation $Pu=0$ possesses no (global) fundamental set of solutions on~$\Omega$.

Henceforth we may therefore assume that some connected component of $\Omega$ is not an $x$-simple region.
Applying Lemma~\ref{lem:OnOpenConnectedNonX-SimpleSets} to this component, we get that up to reflections in~$t$,
the set~$\Omega$ contains,
for some $\tilde t_0,\varepsilon,\tilde x_1,\tilde x_2\in\mathbb R$ with $\varepsilon>0$, $\tilde x_1\leqslant\tilde x_2$
and for some closed subset~$\Upsilon$ of the line segment $\{\tilde t_0\}\times[\tilde x_1,\tilde x_2]$
with $(\tilde t_0,\tilde x_1),(\tilde t_0,\tilde x_2)\in\Upsilon$,
the~subset $[\tilde t_0-\varepsilon,\tilde t_0]\times[\tilde x_1-\varepsilon,\tilde x_2+\varepsilon]\setminus\Upsilon$
and does not intersect~$\Upsilon$.
Consider any $P\in\DO(\Omega)$ with $G(t)\to-\infty$ as $t\to\tilde t_0{}^-$,
where%
\footnote{
If $G(t)\to+\infty$ as $t\to\tilde t_0{}^-$, it is necessary to carry out a reflection in~$x$
permuting the points $\tilde x_1-\varepsilon$ and $\tilde x_2+\varepsilon$.
}
\[
G(t):=\int_{\tilde x_1-\varepsilon}^{\tilde x_2+\varepsilon}\frac{g^{p-1}(t,x)}{g^p(t,x)}\,{\rm d}x,
\quad t\in[\tilde t_0-\varepsilon,\tilde t_0),
\]
and $g^p$ and $g^{p-1}$ denote the leading and subleading coefficients of~$P$, respectively.
An example of appropriate coefficients is given by $g^p(t,x)=1$ and~$g^{p-1}(t,x)=-c\big((x-\tilde x_1)^2+(t-\tilde t_0)^2\big)^{-1}$
with $c>0$ for $(t,x)\in\Omega$,
cf.\ Example~\ref{ex:1stOrderHomLinODEWithVanishingWronskian}.
Then $\mathrm W(\varphi^1,\dots,\varphi^p)=0$ on~$\{\tilde t_0\}\times[\tilde x_1-\varepsilon,\tilde x_1)$
for any solutions $\varphi^1$, \dots, $\varphi^p$ of the equation $Pu=0$.
Indeed, it suffices to prove this claim only for the point $(\tilde t_0,\tilde x_1-\varepsilon)$.
Supposing that it is not the case, by the Liouville--Ostrogradski formula we obtain%
$\mathrm W(\varphi^1,\dots,\varphi^p)(t,\tilde x_2+\varepsilon)
=\mathrm W(\varphi^1,\dots,\varphi^p)(t,\tilde x_1-\varepsilon)e^{-G(t)}\to\infty$, $t\to\tilde t_0{}^-$,
\noprint{
\[
\mathrm W(\varphi^1,\dots,\varphi^p)(t,\tilde x_2+\varepsilon)
=\mathrm W(\varphi^1,\dots,\varphi^p)(t,\tilde x_1-\varepsilon)e^{-G(t)}\to\infty, \quad t\to\tilde t_0{}^-,
\]
}
which contradicts the continuity of $\mathrm W(\varphi^1,\dots,\varphi^p)$ at $(\tilde t_0,\tilde x_2+\varepsilon)$.
\end{proof}

\begin{corollary}
If a connected component of an open set~$\Omega$ is not an $x$-simple region,
then for each~$p\in\mathbb N$ there exists an infinite-parameter family of equations
of the form $Pu=0$ with $P\in\DO^\omega_1(\Omega)$ of order~$p$
such that the Wronskian of any $p$ solutions of any of them vanishes
on the same line segment $\{t_0\}\times[x_1,x_2]$ contained in~$\Omega$.
\end{corollary}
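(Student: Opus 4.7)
The plan is to adapt the construction from the proof of the implication (i)$\Rightarrow$(ii) of Theorem~\ref{thm:OnFundSolutionSetOfLinODEsWithParameter} so that the coefficients become real analytic on $\Omega$ rather than merely continuous. I would begin by applying Lemma~\ref{lem:OnOpenConnectedNonX-SimpleSets} to the non-$x$-simple connected component to fix data $\tilde t_0,\varepsilon,\tilde x_1,\tilde x_2$ and a closed set $\Upsilon\subseteq\{\tilde t_0\}\times[\tilde x_1,\tilde x_2]$ with $(\tilde t_0,\tilde x_1),(\tilde t_0,\tilde x_2)\in\Upsilon$ such that, up to reflections in $t$, the rectangle minus $\Upsilon$ is contained in $\Omega$ and $\Upsilon\cap\Omega=\emptyset$. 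In particular, $(\tilde t_0,\tilde x_1)\notin\Omega$.

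The core step is to exhibit a subleading coefficient that is both real analytic on $\Omega$ and causes $G(t):=\int_{\tilde x_1-\varepsilon}^{\tilde x_2+\varepsilon}g^{p-1}(t,x)/g^p(t,x)\,{\rm d}x$ to diverge as $t\to\tilde t_0{}^{-}$. Setting $g^p\equiv1$ (so $P\in\DO^\omega_1(\Omega)$) and taking
\[
g^{p-1}_0(t,x):=-\frac{c}{(x-\tilde x_1)^2+(t-\tilde t_0)^2},\qquad c>0,
\]
I note that $g^{p-1}_0\in {\rm C}^\omega(\mathbb R^2\setminus\{(\tilde t_0,\tilde x_1)\})\subseteq {\rm C}^\omega(\Omega)$, precisely because the singular point lies in $\Upsilon\subset\Omega^{\rm c}$. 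A short calculation (substituting $s=t-\tilde t_0$, $y=x-\tilde x_1$) gives $G_0(t)\sim -c\pi/|t-\tilde t_0|\to-\infty$. To build an infinite-parameter family, I take
\[
g^{p-1}:=g^{p-1}_0+h,\qquad g^0,\dots,g^{p-2}\in {\rm C}^\omega(\Omega),
\]
where $h$ ranges over any real analytic function on $\Omega$ bounded in a neighborhood of $(\tilde t_0,\tilde x_1)$ (for instance, any element of ${\rm C}^\omega(\mathbb R^2)$), so that the $h$-contribution to $G$ stays bounded and $G(t)\to-\infty$ is preserved. The freedom in $c>0$, in $h$, and in $g^0,\dots,g^{p-2}$ yields the required infinite-parameter family of operators in $\DO^\omega_1(\Omega)$ of order $p$.

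With this choice, I repeat verbatim the Liouville--Ostrogradski argument from the proof of Theorem~\ref{thm:OnFundSolutionSetOfLinODEsWithParameter}. For $t\in[\tilde t_0-\varepsilon,\tilde t_0)$ the entire horizontal segment $\{t\}\times[\tilde x_1-\varepsilon,\tilde x_2+\varepsilon]$ lies in $\Omega$, so
\[
\mathrm W(\varphi^1,\dots,\varphi^p)(t,\tilde x_2+\varepsilon)=\mathrm W(\varphi^1,\dots,\varphi^p)(t,\tilde x_1-\varepsilon)\,e^{-G(t)}.
\]
Were the Wronskian nonzero at $(\tilde t_0,\tilde x_1-\varepsilon)\in\Omega$, continuity would force the left-hand side to blow up as $t\to\tilde t_0{}^{-}$, contradicting continuity of $\mathrm W$ at $(\tilde t_0,\tilde x_2+\varepsilon)\in\Omega$. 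Hence $\mathrm W=0$ at $(\tilde t_0,\tilde x_1-\varepsilon)$, and a second application of Liouville--Ostrogradski along the connected slice $\{\tilde t_0\}\times[\tilde x_1-\varepsilon,\tilde x_1)\subseteq\Omega$ propagates this vanishing to the whole slice. The desired closed segment is then $\{t_0\}\times[x_1,x_2]:=\{\tilde t_0\}\times[\tilde x_1-\varepsilon,\tilde x_1-\varepsilon/2]\subseteq\Omega$.

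The only genuine obstacle is the simultaneous requirement of \emph{global} real analyticity on $\Omega$ and the prescribed singular behavior of the $x$-integral of the subleading coefficient; this is resolved precisely by the Lemma~\ref{lem:OnOpenConnectedNonX-SimpleSets} configuration, which places the pole of $g^{p-1}_0$ on the puncture $\Upsilon$ outside $\Omega$. Everything else is a parameter count and the same Liouville--Ostrogradski bookkeeping already used in Theorem~\ref{thm:OnFundSolutionSetOfLinODEsWithParameter}.
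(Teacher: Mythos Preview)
Your proposal is correct and follows essentially the same route as the paper's proof: both invoke Lemma~\ref{lem:OnOpenConnectedNonX-SimpleSets}, place the pole of the subleading coefficient at $(\tilde t_0,\tilde x_1)\in\Upsilon\subset\Omega^{\rm c}$ to secure real analyticity on~$\Omega$, and then repeat the Liouville--Ostrogradski argument from Theorem~\ref{thm:OnFundSolutionSetOfLinODEsWithParameter}. The only cosmetic difference is in how the free parameters are introduced: the paper takes $g^{p-1}=-f(t,x)\big((x-\tilde x_1)^2+(t-\tilde t_0)^2\big)^{-1}$ with $f\in{\rm C}^\omega(\Omega)$ positive and bounded away from zero near the puncture, whereas you use a constant $c>0$ together with an additive perturbation~$h$; both choices yield an infinite-parameter family in $\DO^\omega_1(\Omega)$ and leave the divergence $G(t)\to-\infty$ intact.
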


\begin{proof}
We follow the proof of Theorem~\ref{thm:OnFundSolutionSetOfLinODEsWithParameter}
and consider an operator~$P\in\DO^\omega_1(\Omega)$ of the form $P=\sum_{q=0}^pg^q\p_x^q$,
where $g^p(t,x)=1$, $g^{p-1}(t,x)=-f(t,x)\big((x-\tilde x_1)^2+(t-\tilde t_0)^2\big)^{-1}$ for $(t,x)\in\Omega$,
$g^q$, $q=0,\dots,p-2$, are arbitrary elements of ${\rm C}^\omega(\Omega)$,
and $f$ is an arbitrary positive function in ${\rm C}^\omega(\Omega)$
that is separated from zero on the intersection of a neighborhood of~$(\tilde t_0,\tilde x_1)$ with~$\Omega$.
The coefficients of the Taylor expansions of~the functions~$f$ and $g^q$, $q=0,\dots,p-2$,
can serve as parameters of the family of equations $Pu=0$, which obviously has the required properties.
\end{proof}

\begin{corollary}
If each connected component of an open non-$x$-simple set~$\Omega$ is $x$-simple,
then any equation $Pu=0$ with $P\in\DO(\Omega)$ admits sets of $\ord P$ solutions with Wronskians nonvanishing on~$\Omega$
and no fundamental set of solutions on~$\Omega$.
\end{corollary}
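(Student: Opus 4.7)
The plan is to derive both assertions by specializing the proof of Theorem~\ref{thm:OnFundSolutionSetOfLinODEsWithParameter} to the present setting, in which every connected component of $\Omega$ is $x$-simple while $\Omega$ itself is not. The existence part uses the $(ii)\Rightarrow(i)$ direction componentwise; the non-existence part uses the first case of the $(i)\Rightarrow(ii)$ direction verbatim.

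First, to produce $p$ solutions of $Pu=0$ with Wronskian nonvanishing on $\Omega$, I would apply Theorem~\ref{thm:OnFundSolutionSetOfLinODEsWithParameter} separately on each connected component of $\Omega$. Each such component is open and $x$-simple, so it carries a fundamental set of solutions whose Wronskian does not vanish there. Because the connected components form a pairwise disjoint open cover of $\Omega$, the corresponding tuples can simply be pasted together to yield functions $\varphi^1,\dots,\varphi^p\in\mathrm{C}^p_x(\Omega)$ with $Pu=0$ and $\mathrm W(\varphi^1,\dots,\varphi^p)\ne 0$ on all of $\Omega$.

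Second, to rule out a fundamental set on $\Omega$, since $\Omega$ is non-$x$-simple but all its components are $x$-simple, I argue that there must be two components $U_1$ and $U_2$ of $\Omega$ with $\pr_t U_1\cap\pr_t U_2\ne\varnothing$. Suppose for contradiction that $\varphi^1,\dots,\varphi^p$ is a fundamental set of solutions of $Pu=0$ on $\Omega$. By the existence part just proved, its Wronskian is nonvanishing on $\Omega$. Using Lemma~\ref{lem:OnOpenX-SimpleRegion}, pick a smooth function $\theta$ on $\pr_t U_1$ whose graph lies in $U_1$, and construct a solution $\psi$ of $Pu=0$ on $\Omega$ such that $\psi(t,\theta(t))=1$ on this graph and $\psi\equiv 0$ on $U_2$ (this $\psi$ exists by the first part: expand it componentwise against the nonvanishing-Wronskian tuple with an appropriate componentwise-constant choice of continuous coefficients). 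Writing $\psi=\zeta^s\varphi^s$ with $\zeta^s\in\mathrm{C}(\pr_t\Omega)$ then forces $\zeta^s\equiv 0$ on $\pr_t U_2$, because on $U_2$ the $\varphi^s$ are pointwise linearly independent. Consequently $\psi$ vanishes on the intersection of $\Omega$ with every vertical line $t=t_0$, $t_0\in\pr_t U_2$, contradicting $\psi(t,\theta(t))=1$ for any $t\in\pr_t U_1\cap\pr_t U_2$.

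The only point requiring a little care is the construction of the separating solution $\psi$: one needs a global $\psi\in\mathrm{C}^p_x(\Omega)$ with $P\psi=0$, matching two prescribed behaviors on $U_1$ and $U_2$ and being arbitrary on the remaining components. This is however immediate once the tuple $\varphi^1,\dots,\varphi^p$ from the existence part is in hand, because on each component the representation $\psi=\zeta^s\varphi^s$ with an independently chosen constant tuple $(\zeta^s)$ produces a local solution of $Pu=0$, and the disjointness of components means that these local choices glue without any compatibility condition.
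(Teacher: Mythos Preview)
Your proposal is correct and follows essentially the same approach as the paper: both assertions are extracted directly from the proof of Theorem~\ref{thm:OnFundSolutionSetOfLinODEsWithParameter}, the existence part from the direction $(\textit{ii})\Rightarrow(\textit{i})$ applied componentwise and glued over the disjoint open components, and the non-existence part from the first case of $(\textit{i})\Rightarrow(\textit{ii})$ verbatim. The paper gives no separate proof of this corollary; your write-up simply makes the implicit argument explicit, including the observation that non-$x$-simplicity of~$\Omega$ together with $x$-simplicity of each component forces two components with overlapping $t$-projections.
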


\begin{corollary}\label{cor:WronkianOfFSS}
Given an open $x$-simple subset~$\Omega$ of the $(t,x)$-plane,
a solution set $\{\varphi^1,\dots,\varphi^p\}$
of a $p$th order linear ordinary differential equation $\mathcal P$: $Pu=0$ with $P\in\DO(\Omega)$
is fundamental on~$\Omega$ if and only if
the Wronskian of these solutions vanishes nowhere on~$\Omega$.
\end{corollary}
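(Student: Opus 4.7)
The plan is to prove both implications separately, leveraging Theorem~\ref{thm:OnFundSolutionSetOfLinODEsWithParameter} and Lemma~\ref{lem:OnWronskianOfFundamentalSolutionSetOfODEWithParameter} together with the crucial fact that $x$-simplicity makes each fiber $\Omega_t$ an open interval (possibly empty), so that classical single-variable ODE theory applies on each fiber.

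For the implication ``nonvanishing Wronskian $\Rightarrow$ fundamental set'', I would proceed as follows. Fix a solution $u$ of $\mathcal P$. For each $t_0\in I:=\pr_t\Omega$, the set $\Omega_{t_0}$ is an open interval by $x$-simplicity, and $\varphi^1(t_0,\cdot),\dots,\varphi^p(t_0,\cdot)$ satisfy the classical linear ODE in $x$ on $\Omega_{t_0}$ with nonvanishing Wronskian, hence form a basis of its solution space. Therefore there exist unique constants $\zeta^s(t_0)$ such that $u(t_0,x)=\zeta^s(t_0)\varphi^s(t_0,x)$ for all $x\in\Omega_{t_0}$; this is where $x$-simplicity is essential, since on a disconnected fiber the representing constants could differ between components. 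It then remains to show $\zeta^s\in {\rm C}(I)$. Pick any $(t_0,x_0)\in\Omega$; by openness, $(t,x_0)\in\Omega$ for $t$ in a neighborhood of $t_0$, and the linear system $u_{s'-1}(t,x_0)=\zeta^s(t)\varphi^s_{s'-1}(t,x_0)$, $s'=1,\dots,p$, has an invertible coefficient matrix (its determinant is $\mathrm W(\varphi^1,\dots,\varphi^p)(t,x_0)\ne0$). Cramer's rule then expresses each $\zeta^s(t)$ as a continuous function of $t$ near $t_0$. Uniqueness of the representation on~$\Omega$ follows from the fiberwise uniqueness.

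For the converse, assume $\{\varphi^1,\dots,\varphi^p\}$ is a fundamental set. Since $\Omega$ is $x$-simple, Theorem~\ref{thm:OnFundSolutionSetOfLinODEsWithParameter} supplies another fundamental set $\{\tilde\varphi^1,\dots,\tilde\varphi^p\}$ of $\mathcal P$ whose Wronskian does not vanish on~$\Omega$. By the fundamental property of $\{\varphi^s\}$, we can write $\tilde\varphi^{s'}=\mu^s_{s'}\varphi^s$ for uniquely determined $\mu^s_{s'}\in {\rm C}(I)$. Differentiating in~$x$ and computing the Wronskian determinant (the transformation matrix $(\mu^s_{s'}(t))$ is applied from the right to the Wronskian matrix of the $\varphi^s$), one obtains the identity
\[
\mathrm W(\tilde\varphi^1,\dots,\tilde\varphi^p)(t,x)=\det\bigl(\mu^s_{s'}(t)\bigr)\,\mathrm W(\varphi^1,\dots,\varphi^p)(t,x)
\]
for all $(t,x)\in\Omega$. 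Since the left-hand side is nonzero at every point of~$\Omega$, the factor $\mathrm W(\varphi^1,\dots,\varphi^p)(t,x)$ must be nonzero throughout~$\Omega$, as desired.

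The main obstacle is the first implication, specifically the step of verifying that the representation coefficients $\zeta^s$ depend only on~$t$ and are \emph{globally} defined continuous functions on~$I$. If some $\Omega_t$ were disconnected, we could not guarantee the same constants work across different components of the fiber, which would obstruct the construction of a single $\zeta^s(t)$; $x$-simplicity removes this obstacle cleanly. The upgrade from a locally fundamental set (provided by Lemma~\ref{lem:OnWronskianOfFundamentalSolutionSetOfODEWithParameter}) to a globally fundamental one is thus built into the fiberwise argument rather than being a separate gluing step.
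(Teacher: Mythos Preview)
Your proof is correct. The paper states this corollary without an explicit proof, but your two directions align with the ideas embedded in the surrounding results: the ``fundamental $\Rightarrow$ nonvanishing Wronskian'' direction is precisely the comparison argument used inside the proof of Theorem~\ref{thm:OnFundSolutionSetOfLinODEsWithParameter} (first case of {\rm({\it i})}${}\Rightarrow{}${\rm({\it ii})}), and the converse globalizes the Cramer-rule construction from the proof of Lemma~\ref{lem:OnWronskianOfFundamentalSolutionSetOfODEWithParameter} using that each fiber~$\Omega_t$ is a single interval.
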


\begin{corollary}\label{cor:OnOpenSetsWithXSimplePieces}
1. If an open set~$\Omega$ has an $x$-simple piece,
then any differential equation $Pu=0$ with $P\in\DO(\Omega)$,
possesses a solution that is not identically zero on~$\Omega$.

2. If there are $x$-simple pieces of~$\Omega$ with overlapping projections to the $t$-axis,
then any equation of the above form admits no fundamental set of solutions on~$\Omega$.
\end{corollary}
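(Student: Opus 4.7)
The plan for part~1 is to combine Theorem~\ref{thm:OnFundSolutionSetOfLinODEsWithParameter}, applied to the $x$-simple piece $V$, with a cutoff-and-extend-by-zero construction in the parameter $t$. Writing $V$ as a connected component of $\Omega\cap(I\times\mathbb R)$ for some open interval $I$, Theorem~\ref{thm:OnFundSolutionSetOfLinODEsWithParameter} provides a fundamental set $\varphi^1,\dots,\varphi^p$ of $Pu=0$ on $V$ with nowhere-vanishing Wronskian. Fix $t_0\in\pr_t V$ and a continuous bump function $\zeta\colon\mathbb R\to\mathbb R$ with $\zeta(t_0)\ne 0$ and $\supp\zeta$ a compact subset of $\pr_t V$. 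Define $\psi:=\zeta\varphi^1$ on $V$ and $\psi:=0$ on $\Omega\setminus V$. The crucial topological observation is that any point of $\overline V\cap\Omega$ lying outside $V$ must have its $t$-coordinate in $\partial I$: distinct components of $\Omega\cap(I\times\mathbb R)$ are open and hence separated in $\Omega$, so $V$ cannot accumulate in $\Omega$ to a point of another component. Since $\supp\zeta\subseteq\pr_t V\subseteq I$ is bounded away from $\partial I$, both $\psi$ and each of its $x$-derivatives up to order $p$ vanish on an $\Omega$-neighbourhood of every such boundary point. Hence $\psi\in{\rm C}^p_x(\Omega)$ and $P\psi=0$ on $\Omega$ (on $V$ because $P$ acts only in $x$ and annihilates $\varphi^1$; elsewhere because $\psi$ is identically zero). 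That $\psi\not\equiv 0$ follows from the nowhere-vanishing Wronskian on $V$, which prevents $\varphi^1(t_0,\cdot)$ from being identically zero on $V_{t_0}$.

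For part~2 I plan to argue by contradiction via a dimension count on the fiber over a common $t_0\in\pr_t V_1\cap\pr_t V_2$. Each $V_{j,t_0}$ is simultaneously an interval (by $x$-simplicity of $V_j$) and a connected component of $\Omega_{t_0}$ (since $V_j$ is a connected component of $\Omega\cap(I_j\times\mathbb R)$); hence $V_{1,t_0}$ and $V_{2,t_0}$ are either equal or disjoint. The former case can be absorbed by enlarging the pair $(V_1,V_2)$ to a single larger $x$-simple piece containing $V_1\cup V_2$, so without loss of generality $V_{1,t_0}\cap V_{2,t_0}=\emptyset$. Now assume $\varphi^1,\dots,\varphi^p$ is a fundamental set of $Pu=0$ on $\Omega$, and fix on each $V_j$ a fundamental set $\tilde\varphi^{j,1},\dots,\tilde\varphi^{j,p}$. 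The construction of part~1 produces, for every $(j,k)\in\{1,2\}\times\{1,\dots,p\}$, a solution $\Psi^{j,k}\in{\rm C}^p_x(\Omega)$ of $Pu=0$ equal to $\zeta\tilde\varphi^{j,k}$ on $V_j$ and zero on $\Omega\setminus V_j$; in particular the fiber $\Psi^{j,k}(t_0,\cdot)$ is supported in $V_{j,t_0}$. Expressing $\Psi^{j,k}=\eta^{j,k,s}\varphi^s$ through the global fundamental set and specialising to $t=t_0$ produces $2p$ vectors $\bigl(\eta^{j,k,s}(t_0)\bigr)_{s=1}^p\in\mathbb R^p$; these are mapped by
\[
T\colon\mathbb R^p\to{\rm C}(V_{1,t_0})\oplus{\rm C}(V_{2,t_0}),\quad (c_s)\mapsto\bigl(c_s\varphi^s(t_0,\cdot)|_{V_{1,t_0}},\,c_s\varphi^s(t_0,\cdot)|_{V_{2,t_0}}\bigr),
\]
to $2p$ vectors lying in the two complementary coordinate subspaces and linearly independent within each by Corollary~\ref{cor:WronkianOfFSS} applied to $V_j$. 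This yields $\dim\mathrm{im}\,T\geqslant 2p$, contradicting the trivial bound $\dim\mathrm{im}\,T\leqslant p$.

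The main technical obstacle I anticipate is the boundary-regularity step in part~1, namely justifying that $\overline V\cap\Omega\setminus V\subseteq\partial I\times\mathbb R$ so that the cutoff $\zeta$ can be forced to vanish in an $\Omega$-neighbourhood of it. A minor subtlety in part~2 is the `nested fibers' case, which reduces to the disjoint one by passing to a maximal $x$-simple piece containing $V_1\cup V_2$; once that is disposed of, the dimension count proceeds cleanly.
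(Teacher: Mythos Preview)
Your part~1 is essentially the paper's argument: solve the equation on the $x$-simple piece and use a bump function in~$t$ to extend by zero. The paper phrases it as posing an initial-value problem on~$V$ with bump functions as initial data along the graph of~$\theta$, while you multiply a fundamental solution by a bump; these are equivalent. Your boundary analysis---that points of $\overline V\cap\Omega\setminus V$ project into $\partial I$, so a compactly supported $\zeta$ kills everything nearby---is correct and makes explicit a step the paper leaves implicit.

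For part~2 your route is genuinely different and considerably more elaborate than the paper's. The paper builds a \emph{single} solution $\psi$ supported in the first piece $U$ with $\psi\big(t_0,\theta(t_0)\big)\ne0$, writes $\psi=\zeta^s\varphi^s$ via the putative fundamental set, observes that $\psi\equiv0$ on the second piece $\tilde U$, and then uses that the Wronskian of the $\varphi^s$ does not vanish on $\tilde U$ (via Corollary~\ref{cor:WronkianOfFSS}) to force $\zeta^s(t_0)=0$ for all~$s$---a direct contradiction. Your dimension count with $2p$ auxiliary solutions works too, and has the minor advantage of bypassing the paper's unproved assertion that the restriction of a global fundamental set to $\tilde U$ is again fundamental on $\tilde U$; but once the disjoint-fibre case is granted, the paper's one-solution argument is decidedly shorter.

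Your reduction of the case $V_{1,t_0}=V_{2,t_0}$ is not sound as written. ``Enlarging $(V_1,V_2)$ to a single larger $x$-simple piece'' leaves you with one piece, not two; you would still need to exhibit a \emph{new} pair of pieces with disjoint fibres over some common parameter value, and you give no argument for this. In fact, when $\Omega$ itself is $x$-simple one can produce distinct $x$-simple pieces with overlapping projections (take sub-strips) yet $\Omega$ certainly admits a fundamental set---so the corollary as literally stated already requires an implicit disjointness hypothesis. The paper's proof makes exactly the same tacit assumption (it uses $\psi\equiv0$ on~$\tilde U$, which needs $\tilde U\cap U=\varnothing$), so this is not a defect of your argument relative to the paper's; but your attempted reduction does not rescue the general case.
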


\begin{proof}
1. Fix $P\in\DO(\Omega)$ with $\ord P=p$ and let $U$ be an $x$-simple piece of $\Omega$.
In view of Lemma~\ref{lem:OnOpenX-SimpleRegion}, there exists a function $\theta\in {\rm C}^\infty(\pr_tU)$
whose graph is contained in~$U$.
For each $t\in\pr_tU$, we consider the initial value problem for the equation $\mathcal P$: $Pu=0$ on~$U_t$
with the initial conditions $u_{s-1}=\chi^s(t)$, $s=1,\dots,p$, at $x=\theta(t)$ and then vary~$t$ through~$\pr_tU$.
Here $\chi^1$,~\dots, $\chi^p$ are bump functions 
(i.e., smooth functions with compact nonempty supports) on~$\pr_tU$.
The continuation of the solution of this problem by zero to~$\Omega$ gives a solution of~$\mathcal P$ on~$\Omega$ as required.

2. Suppose that there is another $x$-simple piece~$\tilde U$ of $\Omega$
such that $\pr_t U\cap\pr_t\tilde U\ne\varnothing$.
We choose a value $t_0\in\pr_t U\cap\pr_t\tilde U$ and additionally set the condition $\chi^1(t_0)\ne0$ in the above construction,
which results in a solution~$\psi\in {\rm C}^p_x(\Omega)$ of~$\mathcal P$ with $\supp\psi\subset U$ and $\psi\big(t_0,\theta(t_0)\big)\ne0$.
If the equation~$\mathcal P$ admitted a fundamental set $\{\varphi^1,\dots,\varphi^p\}$ of solutions on~$\Omega$,
where $p=\ord P$,
then the restrictions of these solutions to~$U$ and to~$\tilde U$ would form fundamental sets of solutions of~$\mathcal P$
on~$U$ and on~$\tilde U$, respectively.
Thus, Corollary~\ref{cor:WronkianOfFSS} would imply that the Wronskian of these solutions does not vanish on $U\cup\tilde U$.
Let us analyze the expansion $\psi=\zeta^s\varphi^s$, where $\zeta^s\in {\rm C}(\pr_t\Omega)$.
Since $\psi=0$ on $\tilde U$, the functions~$\zeta^s$ would vanish on~$\pr_t\tilde U$,
which contradicts the condition $\psi\big(t_0,\theta(t_0)\big)\ne0$.
\end{proof}

If an open set~$\Omega$ contains no $x$-simple pieces,
then there may exist a differential equation $Pu=0$ with $P\in\DO(\Omega)$,
possessing only the zero solution on~$\Omega$:

\begin{example}\label{ex:1stOrderODEWithNoNonzeroSolutions}
On the ``infinitely punctured open square'' $\Omega$ presented in Example~\ref{ex:InfinitelyPuncturedOpenSquare},
we consider the equation $u_x=H(t,x)u$, where
\[
H(t,x):=\sum_{k=1}^\infty\sum_{l=1}^{2^k-1}\frac{4^{-k}c_{kl}}{(x-1+2^{-k})^2+(t-2^{-k}l)^2}, \quad (t,x)\in\Omega,
\]
with positive constants~$c_{kl}$ such that $\{c_{ki},\, k,i\in\mathbb N\}$ is bounded above by a (positive) constant~$C$.
Note that $H\in {\rm C}^\omega(\Omega)$,
being a locally uniformly convergent sum of real analytic functions on~$\Omega$.
Indeed, take an arbitrary point $z_0=(t_0,x_0)\in\Omega$ and fix $\delta>0$
such that the ball $B_{2\delta}(z_0)$ is contained in~$\Omega$.
Then the series for~$H$ is dominated on~$B_\delta(z_0)$
by the convergent series \smash{$\sum_{k=1}^\infty\sum_{l=1}^{2^k-1}\delta^{-2}C4^{-k}$}.

Let $\psi$ be a solution of this equation on~$\Omega$.
According to the proof of Theorem~\ref{thm:OnFundSolutionSetOfLinODEsWithParameter},
the func\-tion~$\psi$ vanishes on the set \smash{$\bigcup_{k=1}^\infty\bigcup_{l=1}^{2^k-1}\{2^{-k}l\}\times(0,1-2^{-k})$},
which is dense in~$\Omega$.
Hence this function vanishes on the entire~$\Omega$.

Moreover, the above consideration allows us to conclude by induction that for any $p\in\mathbb N$
the equation $(\p_x-H)^pu=0$ admits only the zero solution on~$\Omega$.
\end{example}

Example~\ref{ex:1stOrderODEWithNoNonzeroSolutions} can be generalized to the following assertion.

\begin{theorem}\label{thm:NoNonzeroSolutionsForLinODEsWithParameterOnSpecialDomain}\looseness=-1
If an open set~$\Omega$ contains no $x$-simple pieces,
and the subset~$J$ of~$t$'s from $\pr_t\Omega$ with connected $\Omega_t$'s is dense in~$\pr_t\Omega$,
then for each~$p\in\mathbb N$ there exists an infinite-parameter family of equations
of the form $Pu=0$ with $P\in\DO^\omega_1(\Omega)$ of order~$p$
that possess only the zero solution on~$\Omega$.
\end{theorem}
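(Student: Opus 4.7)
The plan is to generalize Example~\ref{ex:1stOrderODEWithNoNonzeroSolutions} directly: construct an $H\in{\rm C}^\omega(\Omega)$ so that the first-order equation $(\p_x-H)\psi=0$ admits only $\psi\equiv0$ as a classical solution on $\Omega$, and then reduce the order-$p$ case to this one by setting $P:=(\p_x-H)^p$. Expanding, $P\in\DO^\omega_1(\Omega)$ has order $p$, and if $Pu=0$ then $v:=(\p_x-H)^{p-1}u$ satisfies $(\p_x-H)v=0$, hence $v\equiv0$ by the first-order case, so induction on $p$ delivers $u\equiv0$. The family parameters will be the coefficient weights $(c_k)$ in the construction below.

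By Remark~\ref{rem:OnSpecificOpenSetsWithNoXSimplePieces}, the hypotheses imply that $\pr_t\Omega\setminus J$ is dense in $\pr_t\Omega$. Fix a countable basis $\{B_m\}_{m\in\mathbb N}$ for the topology of $\Omega$ by open balls whose closures lie in $\Omega$. For each $m$ I pick $t_m\in\pr_tB_m\cap(\pr_t\Omega\setminus J)$; the connected component $U_m$ of $\Omega_{t_m}$ that meets the nonempty slice $B_m\cap(\{t_m\}\times\mathbb R)$ is then a proper open subinterval of $\mathbb R$, so one of $\sup U_m$, $\inf U_m$ is finite. Call this value $y_m$ and set $\sigma_m:=+1$ when $y_m=\sup U_m$, $\sigma_m:=-1$ otherwise; observe $(t_m,y_m)\in\partial\Omega$. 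After relabelling as $\{(t_k,y_k,\sigma_k)\}_{k\in\mathbb N}$, I set, for any sequence $(c_k)$ of positive reals with $\sum_k c_k<\infty$,
\[
H(t,x):=\sum_{k=1}^\infty\frac{\sigma_k c_k}{(x-y_k)^2+(t-t_k)^2}.
\]
Since every $(t_k,y_k)\in\partial\Omega$, the series is dominated on any compact $K\subset\Omega$ by $\mathrm{dist}(K,\partial\Omega)^{-2}\sum_k c_k$; local uniform convergence of real-analytic summands yields $H\in{\rm C}^\omega(\Omega)$ as in the example, and distinct $(c_k)$ provide the promised infinite-parameter family.

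Now let $\psi\in{\rm C}^1_x(\Omega)$ satisfy $\psi_x=H\psi$. Fix $k$. Invoking Lemma~\ref{lem:OnOpenConnectedNonX-SimpleSets} (applied, after a possible shrinking of $B_m$ in the previous paragraph, to a connected component of $\Omega\cap(I_m\times\mathbb R)$ for a small enough interval $I_m\ni t_m$), I have a rectangle-minus-top-closed-set configuration inside $\Omega$ with $(t_k,y_k)$ on its forbidden top segment. Among the summands of $H$, only the $k$th blows up at $(t_k,y_k)$, so $\int_{y_k-\varepsilon}^{y_k+\varepsilon}H(t,x)\,{\rm d}x=\sigma_k c_k\pi/|t-t_k|+O(1)$ as $t\to t_k$; the Liouville--Ostrogradski argument from the proof of the implication ${\rm({\it i})}\Rightarrow{\rm({\it ii})}$ of Theorem~\ref{thm:OnFundSolutionSetOfLinODEsWithParameter} then forces $\psi$ to vanish at a point of $\{t_k\}\times U_k$ (with the sign $\sigma_k$ chosen precisely so that the vanishing falls on the $U_k$-side of $y_k$). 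Uniqueness for the linear first-order ODE $\psi_x=H(t_k,\cdot)\psi$ on the interval $U_k$ propagates the zero across the whole of $\{t_k\}\times U_k$, which by construction meets $B_k$. Since $\{B_m\}$ is a basis, the zero set of $\psi$ is dense in $\Omega$; continuity gives $\psi\equiv0$.

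The main obstacle is the geometric step of guaranteeing, uniformly in $m$, that the chosen pair $(t_m,y_m)$ admits a Lemma~\ref{lem:OnOpenConnectedNonX-SimpleSets}-type local configuration contained in $\Omega$ and with vanishing segment in $U_m$. This is immediate when $\Omega$ is locally puncture-like near $y_m$ (as in Example~\ref{ex:1stOrderODEWithNoNonzeroSolutions}), but in general, when gaps of $\Omega_{t_m}$ accumulate at $y_m$ or when $\partial\Omega$ behaves wildly nearby, it requires shrinking $B_m$ and allowing the forbidden set $\Upsilon$ to be richer than a singleton, as well as selecting the appropriate side of approach to $t_m$. The density of $\pr_t\Omega\setminus J$ in $\pr_t\Omega$, coupled with the absence of $x$-simple pieces, is precisely what provides the flexibility needed to make such selections.
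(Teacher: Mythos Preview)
Your overall architecture---place real-analytic poles on $\partial\Omega$, use a Liouville--Ostrogradski blow-up to force $\psi$ to vanish on a dense family of slice segments, then pass to order~$p$ via $(\p_x-H)^p$---is the same as the paper's. But the proposal has a genuine gap, and it is exactly the step you yourself flag as ``the main obstacle''. Lemma~\ref{lem:OnOpenConnectedNonX-SimpleSets} produces a rectangle-with-slit at \emph{some} $\tilde t_0$ and \emph{some} $[\tilde x_1,\tilde x_2]$; it does not let you prescribe that $(t_k,y_k)$ lies on the forbidden segment, nor that the accessible sides of the rectangle sit inside $U_k$ and inside another component of~$\Omega_{t_k}$. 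Nothing in your selection of $(t_m,y_m)$ guarantees the existence of a point of~$\Omega$ on the far side of~$y_m$ at $t=t_m$, or along a path in $\Omega$ approaching it, which is what the blow-up needs. Your closing sentence (``the density \dots\ is precisely what provides the flexibility needed'') is an assertion, not an argument. A related unjustified claim is $\int_{y_k-\varepsilon}^{y_k+\varepsilon}H(t,x)\,{\rm d}x=\sigma_k c_k\pi/|t-t_k|+O(1)$: all poles $(t_j,y_j)$ lie on~$\partial\Omega$, so they may accumulate at $(t_k,y_k)$ with opposite signs~$\sigma_j$, and the remainder need not stay bounded along your approach.

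The paper dissolves both issues by organizing the construction differently. It works with a countable dense subset $\{t_k\}\subset J$ (the \emph{connected}-slice parameters), not with $\pr_t\Omega\setminus J$. For each such $t_k$ it first chooses nested rectangles $[t_k-\delta_{ki},t_k+\delta_{ki}]\times[a_{ki},b_{ki}]\subset\Omega$ and, using the absence of $x$-simple pieces, then locates auxiliary boundary points $(t_{ki},x'_{ki})\in\Theta\setminus\Omega$ with $t_{ki}\to t_k$; the poles of $H$ are put at the $(t_{ki},x'_{ki})$, not at any pre-selected $(t_k,y_k)$. The rectangle required for the blow-up is thus in place by construction, and the limit is taken as $t\to t_{ki}$ \emph{within}~$J$, where $\Omega_t$ is an interval containing the entire integration path $[b_{ki},b'_{ki}]$. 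One obtains $\psi=0$ on $\{t_{ki}\}\times(a_{ki},x'_{ki})$, and letting $i\to\infty$ gives $\psi(t_k,\theta(t_k))=0$ along a fixed smooth section~$\theta$, hence $\psi\equiv0$ on the dense union of connected slices. Adopting this two-step scheme---choose the rectangles first, then place the poles---is precisely what converts your sketch into a proof.
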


\begin{proof} \looseness=-1
Given a set~$\Omega$ with the prescribed properties and $I:=\pr_t\Omega$,
we can consider each connected component of~$\Omega$ separately,
and thus we can assume that $\Omega$ is connected.
We define the set
\[\Theta:=\{(t,x)\in\mathbb R^2\mid t\in I,\,
{\rm lb}_\Omega(t)<x<{\rm ub}_\Omega(t)\},\]
which is open and $x$-simple with $\pr_t\Theta=I$.
(Moreover, it is the minimal $x$-simple set that contains~$\Omega$.)
We fix a function $\theta\in {\rm C}^\infty(I)$ with graph contained in~$\Theta$,
which exists in view of Lemma~\ref{lem:OnOpenX-SimpleRegion}.

\begin{figure}
\centering
\includegraphics[width=1.\linewidth]{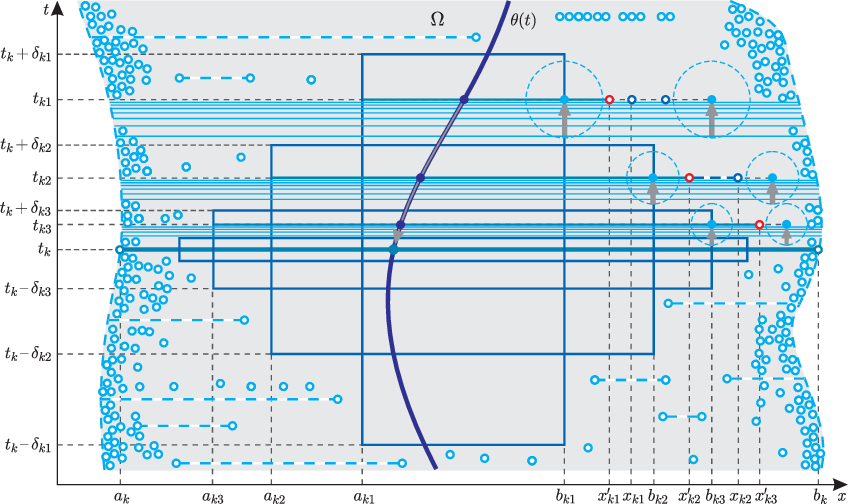}
\caption{Objects related to $t_k$ in the proof of Theorem~\ref{thm:NoNonzeroSolutionsForLinODEsWithParameterOnSpecialDomain}}
\label{fig:NoNonzeroSolutionsForLinODEsWithParameterOnSpecialDomain}
\end{figure}

We choose a countable subset $\{t_k,\,k\in\mathbb N\}$ in~$J$
that is dense in~$J$ and thus in~$I$.
For each~$k$,
we have $\Omega_{t_k}=(a_k,b_k)$,
where $a_k:={\rm lb}_\Omega(t_k)\in\mathbb R\cup\{-\infty\}$ and $b_k:={\rm ub}_\Omega(t_k)\in\mathbb R\cup\{+\infty\}$;
see Figure~\ref{fig:NoNonzeroSolutionsForLinODEsWithParameterOnSpecialDomain}.
Since the set~$\Omega$ is open, there exists a sequence of rectangles $[t_k-\delta_{ki},t_k+\delta_{ki}]\times[a_{ki},b_{ki}]$, $i\in\mathbb N$,
where $\delta_{ki}\downarrow0$, $a_{ki}\downarrow a_k$ and $b_{ki}\uparrow b_k$ strictly monotonically as $i\to\infty$,
and $a_{ki}<\theta(t)<b_{ki}$ for $t\in[t_k-\delta_{ki},t_k+\delta_{ki}]$,
and that are contained in~$\Omega$.
There exists a sequence of points $(t_{ki},x_{ki})\in\Theta\setminus\Omega$, $i\in\mathbb N$,
such that $t_{ki}\in(t_k-\delta_{ki},t_0+\delta_{ki})$ and hence either $x_{ki}<a_{ki}$ or $x_{ki}>b_{ki}$ for each $i\in\mathbb N$.
Indeed, if this was not the case for some~$i$,
then the set~$\Omega$ would possess the \mbox{$x$-simple} piece $\Omega\cap(t_k-\delta_{ki},t_0+\delta_{ki})\times\mathbb R$,
which contradicts the assumption of the lemma.
Therefore, the sequence $(x_{ki}, i\in\mathbb N)$ has limit points
that are less than~$a_k$ or greater than~$b_k$.
Define%
\footnote{
It suffices for each~$k$ to belong to a single set, either~$K_+$ or~$K_-$.
}
\[
K_+:=\bigg\{k\in\mathbb N\ \Big|\,\varlimsup_{i\to\infty}x_{ki}\geqslant b_k\bigg\},\quad
K_-:=\bigg\{k\in\mathbb N\setminus K_+\ \Big|\,      \varliminf_{i\to\infty}x_{ki}\leqslant a_k\bigg\},
\]
endowing these sets with the natural order inherited from $\mathbb N$.
Only one of them may be empty.
If $K_+\ne\varnothing$, then for each $k\in K_+$, we can assume without loss of generality
(by selecting a subsequence)
that $x_{ki}>b_{ki}$ for any $i\in\mathbb N$.
Define $x_{ki}':=\inf\{x\in(b_{ki},x_{ki}]\mid x\notin\Omega_{t_{ki}}\}$.
As a result, we construct the countable tuple $\big((t_{ki},x_{ki}'), k\in K_+,i\in\mathbb N\big)$.%
\footnote{%
In general, there may be repeated points,
but this is not essential for the further construction.
}
In a similar way,
if $K_-\ne\varnothing$, then for each $k\in K_-$, we can assume without loss of generality
that $x_{ki}<a_{ki}$ for any $i\in\mathbb N$.
Then set $x_{ki}':=\sup\{x\in[x_{ki},a_{ki})\mid x\notin\Omega_{t_{ki}}\}$.
This gives the countable tuple $\big((t_{ki},x_{ki}'),\, k\in K_-,\,i\in\mathbb N\big)$.
We define the function
\[
H(t,x):=
 \sum_{k\in K_+}\sum_{i=1}^\infty\frac{2^{-k-i}c_{ki}}{(x-x_{ki}')^2+(t-t_{ki})^2}
-\sum_{k\in K_-}\sum_{i=1}^\infty\frac{2^{-k-i}c_{ki}}{(x-x_{ki}')^2+(t-t_{ki})^2},
\quad (t,x)\in\Omega,
\]
where the $c_{ki}$ are positive constants%
\footnote{%
These constants can be replaced by functions from ${\rm C}^\omega(\Omega)$
each of which is positive on~$\Omega$, bounded above by the same constant~$C$ on~$\Omega$
and separated from zero on the intersection of a neighborhood of the corresponding point $(t_{ki},x_{ki}')$ with~$\Omega$.
}
such that $\{c_{ki},\, k,i\in\mathbb N\}$ is bounded above.
(These $c_{ki}$ can serve as a family of infinitely many parameters, cf.\ the formulation of the proposition.)
The function~$H$ is real analytic on~$\Omega$,
which is shown similarly to Example~\ref{ex:1stOrderODEWithNoNonzeroSolutions}.
Let us prove that the equation $u_x=H(t,x)u$ possesses only the zero solution on~$\Omega$.

Any solution $\psi\in {\rm C}^p_x(\Omega)$ of this equation vanishes on all the line segments
$\{t_{ki}\}\times(a_{ki},x_{ki}')$, $k\in K_+$, $i\in\mathbb N$, and
$\{t_{ki}\}\times(x_{ki}',b_{ki})$, $k\in K_-$, $i\in\mathbb N$.
We will show this for arbitrary fixed $k\in K_+$ and $i\in\mathbb N$.
(The proof for $k\in K_-$ is similar.)
It suffices to prove that $\psi(t_{ki},b_{ki})=0$.
There exists $b_{ki}'\in\Omega_{t_{ki}}$ that is greater than~$x_{ki}'$.
Since the set~$\Omega$ is open, there exists $\delta>0$
such that both the balls $B_{2\delta}\big((t_{ki},b_{ki})\big)$ and $B_{2\delta}\big((t_{ki},b_{ki}')\big)$
are contained in~$\Omega$.
Then also $[b_{ki},b_{ki}']\subset\Omega_t$ for any $t\in J\cap[t_{ki}-\delta,t_{ki}+\delta]$.
Similarly to the proof of Theorem~\ref{thm:OnFundSolutionSetOfLinODEsWithParameter},
the assumption $\psi(t_{ki},b_{ki})\ne0$ implies that
\[
\psi(t,b_{ki}')=\psi(t,b_{ki})\exp\left(\int_{b_{ki}}^{b_{ki}'}H(t,x)\,{\rm d}x\right)\to\infty
\mbox{ \ as \ }  t\to t_{ki}
\mbox{ \ within \ } J\cap[t_{ki}-\delta,t_{ki}+\delta],
\]
which contradicts the continuity of~$\psi$ at the point $(t_{ki},b_{ki}')$.

As a result, we have $0=\psi\big(t_{ki},\theta(t_{ki})\big)\to\psi\big(t_k,\theta(t_k)\big)$ as $i\to\infty$,
and hence $\psi\big(t_k,\theta(t_k)\big)=0$.
Therefore, $\psi=0$ on the union $\bigcup_{k=1}^\infty\{t_k\}\times\Omega_{t_k}$, which is dense in~$\Omega$.
This finally implies that $\psi=0$ on~$\Omega$.

It is easy to prove by induction using the above claim on the equation $u_x=H(t,x)u$
as both the base case and a base for proving the inductive step
that for any $p\in\mathbb N$ the equation $(\p_x-H)^pu=0$ admits only the zero solution on~$\Omega$.
\end{proof}

The following is an analogue of Theorem~\ref{thm:NoNonzeroSolutionsForLinODEsWithParameterOnSpecialDomain}
for an arbitrary open subset of the $(t,x)$-plane without $x$-simple pieces
only for equations with coefficients in ${\rm C}^\omega_x(\Omega)$.

\begin{theorem}\label{thm:NoNonzeroSolutionsForLinODEsWithParameter}
An open set~$\Omega$ contains no $x$-simple pieces if and only if
for each~$p\in\mathbb N$ there exists an infinite-parameter family of equations
of the form $Pu=0$ with $P\in\DO^\omega_{x,1}(\Omega)$ of order~$p$
that possess only the zero solution on~$\Omega$.
\end{theorem}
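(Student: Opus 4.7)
Necessity follows at once from Corollary~\ref{cor:OnOpenSetsWithXSimplePieces}(1): any $x$-simple piece of~$\Omega$ produces a nonzero solution to every equation $Pu=0$ with $P\in\DO(\Omega)\supseteq\DO^\omega_{x,1}(\Omega)$, precluding the existence of the asserted family.

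For sufficiency, assume $\Omega$ has no $x$-simple pieces. As in the closing paragraph of the proof of Theorem~\ref{thm:NoNonzeroSolutionsForLinODEsWithParameterOnSpecialDomain}, it suffices to produce a single $H\in{\rm C}^\omega_x(\Omega)$ such that $u_x=Hu$ has only the zero solution on~$\Omega$; induction then promotes this to $(\p_x-H)^pu=0$ for arbitrary $p\in\mathbb N$, and the positive parameters entering $H$ (together with the freedom to perturb by arbitrary lower-order operators in $\DO^\omega_{x,1}(\Omega)$) deliver the infinite parameter family.

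The construction of $H$ mimics that in the proof of Theorem~\ref{thm:NoNonzeroSolutionsForLinODEsWithParameterOnSpecialDomain}, substituting the analyticity of $\psi$ in~$x$ for the density of~$J$. After restricting to a connected component $\Omega'$ of~$\Omega$, I would take a countable basis $\{R_n\}$ of~$\Omega'$ consisting of open rectangles whose closures lie in~$\Omega'$. For each~$n$, the connected component $W_n$ of $\Omega'\cap(\pr_tR_n\times\mathbb R)$ containing~$R_n$ is non-$x$-simple by hypothesis, so Lemma~\ref{lem:OnOpenConnectedNonX-SimpleSets} applied to $W_n$ supplies parameters $(\tilde t_0^{(n)},\tilde x_1^{(n)},\tilde x_2^{(n)},\varepsilon^{(n)})$ and a puncture $\Upsilon^{(n)}\subset\{\tilde t_0^{(n)}\}\times[\tilde x_1^{(n)},\tilde x_2^{(n)}]$ disjoint from~$\Omega'$ such that the corresponding rectangular configuration sits inside~$W_n$. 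Picking $(\tilde t_0^{(n)},\tilde x^{(n)})\in\Upsilon^{(n)}$, I set
\[
H(t,x):=\sum_n \frac{\pm\,c_n}{(x-\tilde x^{(n)})^2+(t-\tilde t_0^{(n)})^2},
\]
with signs dictated by the reflection-in-$t$ branch of the lemma and positive constants $c_n$ small enough for locally uniform convergence on~$\Omega'$; then $H\in{\rm C}^\omega(\Omega')\subseteq{\rm C}^\omega_x(\Omega')$, since each summand is real-analytic on~$\Omega'$. The Liouville--Ostrogradski argument from the proof of Theorem~\ref{thm:OnFundSolutionSetOfLinODEsWithParameter}, using that $\int_{\tilde x_1^{(n)}-\varepsilon^{(n)}}^{\tilde x_2^{(n)}+\varepsilon^{(n)}}H(t,x)\,{\rm d}x$ is dominated by the $n$-th summand and diverges as $t$ approaches $\tilde t_0^{(n)}$ from below while the remaining summands stay bounded, forces any solution $\psi$ of $u_x=Hu$ to vanish at the edge point $(\tilde t_0^{(n)},\tilde x_1^{(n)}-\varepsilon^{(n)})$; the first-order ODE (or equivalently analyticity in~$x$ together with the identity theorem on intervals) then extends this zero to the entire connected component of $\Omega'_{\tilde t_0^{(n)}}$ containing that edge. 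Provided this vanishing set meets every $R_n$, density of $\{R_n\}$ in~$\Omega'$ and continuity of~$\psi$ force $\psi\equiv0$.

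The technical heart of the proof, and the main obstacle, is precisely to arrange that the forced-vanishing slice component meets~$R_n$: a priori, $\tilde x_1^{(n)}-\varepsilon^{(n)}$ and the slice $R_n\cap\{t=\tilde t_0^{(n)}\}$ may lie in different connected components of $\Omega'_{\tilde t_0^{(n)}}$. To handle this, I would densify and localize the construction, applying Lemma~\ref{lem:OnOpenConnectedNonX-SimpleSets} to the connected components of $\Omega'\cap(J\times\mathbb R)$ for shrinking intervals $J$ around every rational point $z\in\Omega'$; as $J$ contracts, the configuration localizes near~$z$, and by suitably combining the reflection-in-$t$ option of Lemma~\ref{lem:OnOpenConnectedNonX-SimpleSets} with the choice of which edge is forced to vanish (determined by the sign of the singular term in $H$), one can arrange that for $J$ small the forced-vanishing edge lies in the slice component of~$z$. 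Making this geometric localization precise---using the semi-continuity of ${\rm lb}_{\Omega'}$ and ${\rm ub}_{\Omega'}$ from Lemma~\ref{lem:OnOpenSetLB&UB} and the flexibility, in the proof of Lemma~\ref{lem:OnOpenConnectedNonX-SimpleSets}, to start from any disconnected slice of~$W_n$ (such slices being dense in $\pr_tW_n$ by the no-$x$-simple-piece assumption)---is what the analyticity-in-$x$ hypothesis buys in place of the density of~$J$ exploited in Theorem~\ref{thm:NoNonzeroSolutionsForLinODEsWithParameterOnSpecialDomain}.
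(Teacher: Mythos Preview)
Your proposal correctly handles necessity and the reduction to the first-order case, and you correctly identify the central difficulty. But there are two genuine gaps in the sufficiency argument.

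First, your function $H$ omits the $t$-cutoff functions $\chi^k$ that the paper introduces precisely to guarantee the domination claim you assert. Without them, the statement that ``$\int_{\tilde x_1^{(n)}-\varepsilon^{(n)}}^{\tilde x_2^{(n)}+\varepsilon^{(n)}}H(t,x)\,{\rm d}x$ is dominated by the $n$-th summand \dots\ while the remaining summands stay bounded'' is unjustified: nothing prevents two puncture times $\tilde t_0^{(n)},\tilde t_0^{(m)}$ from being close (or equal) while carrying opposite signs, so both integrals diverge and may cancel. The paper handles this by (a) multiplying each singular term by a smooth cutoff $\chi^k(t)$ supported in a short $t$-interval $I_k$ and (b) recursively defining $\Omega_{k+1}$ by removing from $\Omega_k$ all $t$-lines already used, so that $\chi^{k'}(t_{klm})=0$ for $k'>k$; this is exactly why the theorem is stated for $\DO^\omega_{x,1}$ rather than $\DO^\omega_1$. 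Your pure sum of analytic kernels would actually land in ${\rm C}^\omega(\Omega)$, but then the non-interference argument collapses.

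Second, and more seriously, you explicitly leave the key step unproved: arranging that the slice-component on which $\psi$ is forced to vanish actually meets a dense family of points of~$\Omega$. Your last paragraph is a plan, not an argument. The paper does not try to repair an application of Lemma~\ref{lem:OnOpenConnectedNonX-SimpleSets}; instead it works directly with the upper/lower slice bounds $a^k,b^k$ of the connected slice component through a chosen point $(t_k,x_k)$, proves by a four-case dichotomy that one of them must fail the relevant semi-continuity inequality on some sequence $t_{klm}\to t_{kl}$ (otherwise an $x$-simple piece would exist), and places the singularity at $(t_{kl},b^k(t_{kl}))$ or $(t_{kl},a^k(t_{kl}))$. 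This guarantees by construction that the forced zero lies in the slice component containing $(t_k,x_k)$, which is chosen within $k^{-1}$ of a prescribed dense point. Your route via Lemma~\ref{lem:OnOpenConnectedNonX-SimpleSets} gives no control over which slice-component the edge $(\tilde t_0^{(n)},\tilde x_1^{(n)}-\varepsilon^{(n)})$ lands in, and the ``localize as $J$ shrinks'' heuristic does not obviously force it into the component of the chosen base point.

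A minor point: the parenthetical ``freedom to perturb by arbitrary lower-order operators'' does not preserve the only-zero-solution property (e.g.\ $(\p_x-H)+H=\p_x$), so it should be dropped; the parameters~$c_n$ alone already give an infinite family.
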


\begin{proof}
We prove the sufficiency of the absence of $x$-simple pieces for existence of equations with only the zero solution
since the necessity follows from point~1 of Corollary~\ref{cor:OnOpenSetsWithXSimplePieces}.
Thus, suppose that an open set~$\Omega$ contains no $x$-simple pieces.

Choose a countable dense subset $\{(t^*_k,x^*_k),\,k\in\mathbb N\}$ of $\Omega$.
We consider nested open subsets $\Omega_k$, $k\in\mathbb N$, of~$\Omega$,
$\Omega_1:=\Omega\supset\Omega_2\supset\Omega_3\supset\cdots$,
and points $(t_k,x_k)\in\Omega_k$ with $(t_1,x_1):=(t^*_1,x^*_1)$
and $(t_k-t^*_k)^2+(x_k-x^*_k)^2<k^{-2}$ for $k>1$.
The subsets~$\Omega_k$ with $k>1$ will be defined recursively later.

For each $k\in\mathbb N$, we implement the following procedure.

There exists $\delta_k>0$ such that $I_k\times\{x_k\}\subset\Omega_k$, where $I_k:=(t_k-\delta_k,t_k+\delta_k)$.
Define the functions
$a^k\colon I_k\to\mathbb R\cup\{-\infty\}$ and
$b^k\colon I_k\to\mathbb R\cup\{+\infty\}$ by
$a^k(t):=\inf\{x\in\mathbb R\mid[x,x_k]\subset\Omega_t\}$ and 
$b^k(t):=\sup\{x\in\mathbb R\mid[x_k,x]\subset\Omega_t\}$.    
These functions are upper and lower semi-continuous on~$I_k$, respectively;
cf.\ the proof of Lemma~\ref{lem:OnOpenSetLB&UB}.
Indeed, fix an arbitrary $t\in I_k$.
If $a^k(t)\in\mathbb R$, then for any~$\varepsilon>0$ with $a^k(t)+\varepsilon<x_k$,
the interval $[a^k(t)+\varepsilon,x_k]$ is contained in~$\Omega_t$
and thus there exists a $\delta>0$
such that $(t-\delta,t+\delta)\subset I_k$
and $(t-\delta,t+\delta)\times[a^k(t)+\varepsilon,x_k]\subset\Omega$.
Therefore, for any $t'\in(t-\delta,t+\delta)$ we have $a^k(t')<a^k(t)+\varepsilon$.
Analogously, if $a^k(t)=-\infty$, then for an arbitrary~$N>0$ with $-N<x_k$,
the interval $[-N,x_k]$ is contained in~$\Omega_t$
and again there exists a $\delta>0$
such that  $(t-\delta,t+\delta)\subset I_k$ and $(t-\delta,t+\delta)\times[-N,x_k]\subset\Omega$.
Hence for any $t'\in(t-\delta,+\delta)$ we have $a^k(t')<-N$.
In total, this means that the function~$a^k$ is upper semi-continuous on~$I_k$.
The lower semi-continuity of~$b^k$ is proved in a similar way.

We distinguish four possible cases.
For each of Cases 2--4, we assume that the conditions of the previous cases do not hold.

\medskip\par\noindent
1.\ There exists a sequence $(t_{k0m})_{m\in\mathbb N}$ contained in~$I_k$
and strictly monotonically converging to~$t_k$ such that
\smash{$\displaystyle\beta_k:=\limsup_{m\to\infty}b^k(t_{k0m})>b^k(t_k)$} and
$(b^k(t_k),\beta_k)\cap\Omega_{t_k}\ne\varnothing$.
Set $\Lambda_k:=\{0\}$ and $t_{k0}:=t_k$.

\medskip\par\noindent
2.\ Else there exists a sequence $(t_{k0m})_{m\in\mathbb N}$ contained in~$I_k$
and strictly monotonically converging to~$t_k$ such that
\smash{$\displaystyle\alpha_k:=\liminf_{m\to\infty}a^k(t_{k0m})<a^k(t_k)$} and
$(\alpha_k,a^k(t_k))\cap\Omega_{t_k}\ne\varnothing$.
Set $\Lambda_k:=\{0\}$ and $t_{k0}:=t_k$.

\medskip\par\noindent
3.\ Else there exists a sequence $(t_{kl})_{l\in\mathbb N}$ contained in~$I_k$
and strictly monotonically converging to~$t_k$
such that for each $l\in\mathbb N$
there exists a sequence $(t_{klm})_{m\in\mathbb N}$ contained in~$I_k$
and strictly monotonically converging to $t_{kl}$ with
\smash{$\displaystyle\beta_{kl}:=\limsup_{m\to\infty}b^k(t_{klm})>b^k(t_{kl})$} and
$(b^k(t_{kl}),\beta_{kl})\cap\Omega_{t_{kl}}\ne\varnothing$.
Set $\Lambda_k:=\mathbb N$.

\medskip\par\noindent
4.\ Else there exists a sequence $(t_{kl})_{l\in\mathbb N}$ contained in~$I_k$
and strictly monotonically converging to~$t_k$
such that for each $l\in\mathbb N$
there exists a sequence $(t_{klm})_{m\in\mathbb N}$ contained in~$I_k$
and strictly monotonically converging to $t_{kl}$ with
\smash{$\displaystyle\alpha_{kl}:=\liminf_{m\to\infty}a^k(t_{klm})<a^k(t_{kl})$} and
\mbox{$(\alpha_{kl},a^k(t_{kl}))\cap\Omega_{t_{kl}}\ne\varnothing$}.
Set $\Lambda_k:=\mathbb N$.

\medskip

Let us show that one of the above cases necessarily holds.
Indeed, otherwise there exists $\delta'_k$ with $0<\delta'_k<\delta_k$
such that the restrictions of $a^k$ and $b^k$ on the interval $I'_k:=(t_k-\delta'_k,t_k+\delta'_k)$
have none of the properties associated with these cases.
Consider the intersection~$\Upsilon$ of~$\Omega$ with the strip $I'_k\times\mathbb R$
and partition it into three parts,
\begin{align*}
\Upsilon_-&:=\{(t,x)\in\Omega\mid t\in I'_k,\, x\leqslant a^k(t)\},\\
\Upsilon_0&:=\{(t,x)\in\Omega\mid t\in I'_k,\, a^k(t)<x<b^k(t)\},\\
\Upsilon_+&:=\{(t,x)\in\Omega\mid t\in I'_k,\, x\geqslant b^k(t)\};
\end{align*}
see Figure~\ref{fig:NoNonzeroSolutionsForLinODEsWithParameterOnDomainWithoutXSimplePieces}.
In fact,
$\Upsilon_0=\{(t,x)\in\mathbb R^2\mid t\in I'_k,\, a^k(t)<x<b^k(t)\}$.
From this it is obvious that $\Upsilon_0$ is a subset of~$\Omega$ that is $x$-simple and connected.
Since the lower and upper bounds of~$\Upsilon_0$ in~$x$, \smash{$a^k|_{I'_k}$} and \smash{$b^k|_{I'_k}$},
are upper and lower semi-continuous, respectively,
then Lemma~\ref{lem:OnOpenX-SimpleRegion} implies that $\Upsilon_0$ is an open set.
Hence $\Upsilon_0$ is not a connected component of~$\Upsilon$;
otherwise $\Upsilon_0$ would be an $x$-simple piece of~$\Omega$.
This implies that $\Upsilon_-\cup\Upsilon_+\ne\varnothing$ and
there exists a continuous path $\gamma=(\gamma^1,\gamma^2)\colon[0,1]\to\Upsilon$
such that $\gamma(0)\in\Upsilon_0$ and $\gamma(1)\in\Upsilon_-\cup\Upsilon_+$\,.
Define $\tau_0:=\sup\{\tau\in[0,1]\mid\gamma([0,\tau])\subset\Upsilon_0\}$.
Since the set~$\Upsilon_0$ is open, the point $\gamma(\tau_0)$ does not belong to it
and thus it belongs to $\Upsilon_-\cup\Upsilon_+$, say to $\Upsilon_+$.
It is obvious that $b^k(\gamma^1(\tau))>\gamma^2(\tau)$ for $\tau\in[0,\tau_0)$,
and $b^k(\gamma^1(\tau_0))<\gamma^2(\tau_0)$.
Therefore, for $\hat t:=\gamma^1(\tau_0)$ we have
$\hat t\in I'_k$, $\hat\beta:=\limsup_{t\to\hat t}b^k(t)>b^k(\hat t)$ and
$(b^k(\hat t),\hat\beta)\cap\Omega_{\hat t}\ne\varnothing$,
which contradicts the conditions for~$I'_k$.
\looseness=-1

\begin{figure}
\centering
\includegraphics[width=1.\linewidth]{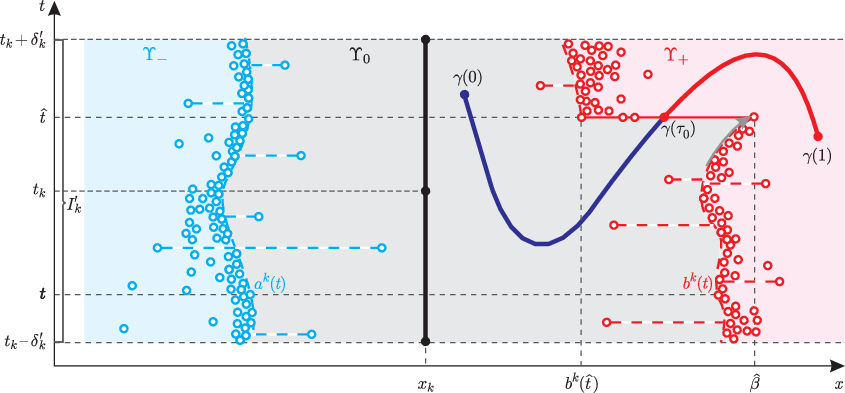}
\caption{Argumentation on the conditions for $a^k$ and~$b^k$ in the proof of Theorem~\ref{thm:NoNonzeroSolutionsForLinODEsWithParameter}}
\label{fig:NoNonzeroSolutionsForLinODEsWithParameterOnDomainWithoutXSimplePieces}
\end{figure}

Denote the set of $k$'s related to Cases~1 and~3 by~$K_+$
and the set of $k$'s related to Cases~2 and~4 by~$K_-$.
Thus, $K_+\cup K_-=\mathbb N$ and $K_+\cap K_-=\varnothing$.
We define $\Omega_1:=\Omega$ and
\[
\Omega_{k+1}=\Omega_k\setminus\big(
\{t_k,t_{kl},t_{klm},l\in\Lambda_k,m\in\mathbb N\}\times\mathbb R\big),\quad k\in\mathbb N,
\]
i.e., the set~$\Omega_{k+1}$ is obtained from~$\Omega_k$
by excluding all lines with fixed values of~$t$ that are involved in the $k$th step.
Clearly there exists a point $(t_{k+1},x_{k+1})\in\Omega_{k+1}$
with $(t_{k+1}-t^*_{k+1})^2+(x_{k+1}-x^*_{k+1})^2<(k+1)^{-2}$.
Hence the above recursion procedure is well defined.

We define the function
\[
H(t,x):=
\sum_{k\in K_+}\sum_{l\in\Lambda_k}\frac{2^{-k-l}c_{kl}\chi^k(t)}{(x-b^k(t_{kl}))^2+(t-t_{kl})^2}-
\sum_{k\in K_-}\sum_{l\in\Lambda_k}\frac{2^{-k-l}c_{kl}\chi^k(t)}{(x-a^k(t_{kl}))^2+(t-t_{kl})^2},
\]
for $(t,x)\in\Omega$,
where $c_{kl}$ are positive constants%
\footnote{%
These constants can be replaced by functions from ${\rm C}^\omega_x(\Omega)$
each of which is positive on~$\Omega$, bounded above by the same constant~$C$ on~$\Omega$
and separated from zero on the intersection of~$\Omega$ by a neighborhood of the corresponding point,
$(t_{kl},a^k(t_{kl}))$ if $k\in K_-$ or $(t_{kl},b^k(t_{kl}))$ if $k\in K_+$.
}
such that $\{c_{kl}, k\in\mathbb N, l\in\Lambda_k\}$
is bounded above by a (positive) constant~$C$,
and the functions $\chi^k\in {\rm C}^\infty(\pr_t\Omega)$ satisfy the properties
\begin{gather*}
\chi^k(t_{kl})=1,\quad \chi^k(t)\geqslant0\ \ \forall t\in\pr_t\Omega,\quad \supp \chi^k\subseteq I_k.
\end{gather*}
(Again, these $c_{ki}$ can serve as a family of infinitely many parameters, cf.\ the formulation of the theorem.)
The function~$H$ belongs to the space ${\rm C}^\omega_x(\Omega)$,%
\footnote{%
For each fixed $q\in\mathbb N$, we can obtain $q$ times continuous differentiability of~$H$ with respect to~$t$
by setting more restrictive conditions on the parameters $c_{kl}$.
More precisely, denote by ${\rm C}^{\omega,q}_{x,t}(\Omega)$ the subspace of functions in ${\rm C}^\omega_x(\Omega)$
that are continuously differentiable with respect to~$t$ $q$ times,
with each of these derivatives belonging to ${\rm C}^\omega_x(\Omega)$.
Then the above function~$H$ belongs to ${\rm C}^{\omega,q}_{x,t}(\Omega)$
if additionally $c_{kl}<C/\max\{1,|(\p^{q'}\chi^k/\p t^{q'})(t)|, t\in\pr_t\Omega, q'=1,\dots,q\}$
for all $k\in\mathbb N$ and all $l\in\Lambda_k$.
}
being a locally uniformly convergent sum of functions in ${\rm C}^\omega_x(\Omega)$.
Indeed, take an arbitrary point $z_0=(t_0,x_0)\in\Omega$ and fix $\delta>0$
such that the ball $B_{2\delta}(z_0)$ is contained in~$\Omega$.
Then the series for~$H$ is dominated on~$B_\delta(z_0)$
by the convergent series \smash{$\sum_{k=1}^\infty\sum_{l\in\Lambda_k}C\delta^{-2}2^{-k-l}$}.

Let us prove that the equation $u_x=H(t,x)u$ possesses only the zero solution on~$\Omega$.
Any solution $\psi\in {\rm C}^1_x(\Omega)$ of this equation vanishes on all the line segments
$\{t_{kl}\}\times(a^k(t_{kl}),b^k(t_{kl}))$, $k\in \mathbb N$, $l\in\Lambda_k$, and hence
on all the line segments $\{t_k\}\times(a^k(t_k),b^k(t_k))$, $k\in \mathbb N$.
We will show this for arbitrary fixed $k\in K_+$ and $l\in\Lambda_k$.
(The proof for $k\in K_-$ is similar.)
Since the union of the line segments $\{t_k\}\times(a^k(t_k),b^k(t_k))$, $k\in \mathbb N$,
is dense in~$\Omega$, this will imply that the function~$\psi$ vanishes identically on~$\Omega$.

We fix a $y_1\in(a^k(t_{kl}),b^k(t_{kl}))$ and a $y_2\in(b^k(t_{kl}),\beta_{kl})$.
Selecting a subsequence if necessary, we can assume without loss of generality
that the sequence $(b^k(t_{klm}))_{m\in\mathbb N}$ converges to~$\beta_{kl}$.
Since the function~$a^k$ is upper semi-continuous on~$I_k$ and
$b^k(t_{klm})\to\beta_{kl}>b^k(t_{kl})$ as $m\to\infty$,
there exists $N_1\in\mathbb N$ such that
$[y_1,y_2]\subset(a^k(t_{klm}),b^k(t_{klm}))\subset\Omega_{t_{klm}}$ for any $m>N_1$.
Since $\chi^k(t_{kl})=1$ and $t_{klm}\to t_{kl}$ as $m\to\infty$,
there exists $N_2\in\mathbb N$ such that $\chi^k(t_{klm})>1/2$ for any $m>N_1$.
Further we consider only values of~$m$ greater than~$N:=\max(N_1,N_2)$.
We have $\chi^{k'}(t_{klm})=0$ for any $k'>k$ and any $m\in\mathbb N$ and thus
\[
H(t_{klm},x)\geqslant\frac{2^{-k-l-1}c_{kl}}{(x-b^k(t_{kl}))^2+(t-t_{kl})^2}-C\delta_{kl}^{-2},
\]
where $\delta_{kl}:=\mathop{\rm dist}(\mathbb R\setminus I_k,\{t_{kl},t_{klm},m\in\mathbb N\})$.
Consequently, the assumption $\psi(t_{kl},y_1)\ne0$ implies that
\[
\psi(t_{klm},y_2)=\psi(t_{klm},y_1)\exp\left(\int_{y_1}^{y_2}H(t_{klm},x)\,{\rm d}x\right)\to\infty
\mbox{ \ as \ }  m\to\infty,
\]
which contradicts the continuity of~$\psi$ at the point $(t_{ki},b_{ki}')$.
Therefore, the function~$\psi$ vanishes at $(t_{kl},y_1)$
and thus it vanishes on the entire line segment $\{t_{kl}\}\times(a^k(t_{kl}),b^k(t_{kl}))$.

Similarly to the proof of Theorem~\ref{thm:NoNonzeroSolutionsForLinODEsWithParameterOnSpecialDomain},
we use the above claim on the equation $u_x=H(t,x)u$
as both the base case and a base for proving the inductive step
and derive
that for any $p\in\mathbb N$ the equation $(\p_x-H)^pu=0$ admits only the zero solution on~$\Omega$.
\end{proof}

\section{Existence of solutions of inhomogeneous linear\\ ordinary differential equations with parameter}
\label{sec:ExistenceOfSolutionsOfInhomLinODEsWithParameter}

As illustrated by the following example,
an inhomogeneous linear $p$th order ordinary differential equation with independent variable~$x$ and parameter~$t$
and with real analytic coefficients and right hand side defined on an open set~$\Omega\subseteq\mathbb R^2$ of~$(t,x)$
may possess no continuous solutions on~$\Omega$ at all.

\begin{example}\label{ex:1stOrderInhomLinODEWithoutSolutions}
Similarly to Example~\ref{ex:1stOrderHomLinODEWithVanishingWronskian},
consider the elementary linear inhomogeneous first-order ordinary differential equation
\[
\mathcal P\colon\quad u_x=\frac 1{x^2+t^2}\quad\mbox{on}\quad \Omega=\mathbb R^2\setminus\{(0,0)\},
\]
which corresponds to the operator $P:=\p_x\in\DO^\omega(\Omega)$.
For each fixed~$t$, its general solution is
\[
u=\frac1t\arctan\frac xt+C\quad\mbox{if}\quad t\ne0, \qquad
u=-\frac1x+C\quad\mbox{if}\quad t=0,
\]
where $C$ is an arbitrary constant.
This solution is well defined on the entire $\Omega_t=\mathbb R$ if $t\ne0$,
and should be separately considered on each $x$-semiaxis, $\mathbb R_{+}$ and $\mathbb R_{-}$, if $t=0$.
The functions
\begin{gather*}
u=\frac1t\arctan\frac xt+\zeta^+(t),\quad t>0,\ x\in\mathbb R,
\\[.5ex]
u=\frac1t\arctan\frac xt+\zeta^-(t),\quad t<0,\ x\in\mathbb R,
\end{gather*}
where the parameter function~$\zeta^+$ (resp.~$\zeta^-$) runs through
${\rm C}(\mathbb R_{+})$ (resp.\ ${\rm C}(\mathbb R_{-})$\,),
represent the general solutions of the equation~$\mathcal P$
on the domains $\mathbb R_{+}\times\mathbb R$ and $\mathbb R_{-}\times\mathbb R$, respectively.
The question is whether there exists a solution of~$\mathcal P$ that is continuous on the entire~$\Omega$.
Suppose that this is the case, and that $u=\varphi(t,x)$ is such a solution.
Define the function $\zeta(t):=\varphi(t,-1)$, $t\in\mathbb R$.
We have $\zeta\in {\rm C}(\mathbb R)$ and
\[
\varphi(t,x)=
\begin{cases}
\dfrac1t\arctan\dfrac xt+\dfrac\pi{2|t|}-\dfrac{\arctan t}t+\zeta(t)\quad\mbox{if}\quad t\ne0,\ x\in\mathbb R,
\\[2.5ex]
-\dfrac1x-1+\zeta(0) \quad\mbox{if}\quad t=0,\ x\in\mathbb R_-,
\end{cases}
\]
where we use the equality~\eqref{eq:IdentityWithArctan}.
Here the right hand side is continuous on $\mathbb R^2\setminus\big(\{0\}\times[0,+\infty)\big)$
but cannot be continuously extended to~$\Omega$ since for $x>0$ and $t\to0$ we obtain
\[
\dfrac1t\arctan\dfrac xt+\dfrac\pi{2|t|}-\dfrac{\arctan t}t+\zeta(t)=
-\dfrac1t\arctan\dfrac tx+\dfrac\pi{ |t|}-\dfrac{\arctan t}t+\zeta(t)\to+\infty.
\]
In other words, the equation~$\mathcal P$ has
\emph{no (continuous) solution on the entire domain}~$\Omega$.
\end{example}

\begin{example}\label{ex:InfiniteSetFamilyOf1stOrderInhomLinODEWithoutSolutions}
Generalizing Example~\ref{ex:1stOrderInhomLinODEWithoutSolutions},
consider the family of elementary linear inhomogeneous first-order ordinary differential equations
\[
\mathcal P_f\colon\quad (x^2+t^2)u_x=f(t,x)\quad\mbox{on}\quad \Omega=\mathbb R^2\setminus\{(0,0)\},
\]
with the operator $P:=(x^2+t^2)\p_x\in\DO^\omega(\Omega)$,
where the parameter function~$f$ runs through the subset~$\mathcal F$ of functions from ${\rm C}(\Omega)$
whose values at certain upper or lower half-neighborhoods of $(0,0)$ are separated from zero,
i.e., for each element~$f$ of~$\mathcal F$ there exist $\delta,\varepsilon>0$ such that,
up to reflections in~$t$ and function values,
$f(t,x)\geqslant\delta$ for $(t,x)\in(0,\varepsilon]\times[-\varepsilon,\varepsilon]$.
Supposing that the equation~$\mathcal P_f$ admits a solution~$\varphi\in {\rm C}^1_x(\Omega)$,
we obtain
\[
\begin{split}
\varphi(t,\varepsilon)&=\varphi(t,-\varepsilon)+\int_{-\varepsilon}^\varepsilon\frac{f(t,x)}{x^2+t^2}\,{\rm d}x
\geqslant\varphi(t,-\varepsilon)+\int_{-\varepsilon}^\varepsilon\frac{\delta\,{\rm d}x}{x^2+t^2}\\[.5ex]
&=\frac{\pi\delta}t-\frac{2\delta}t\arctan\dfrac t\varepsilon+\varphi(t,-\varepsilon)\to +\infty
\quad\mbox{as}\quad t\to0 \quad\mbox{within}\quad (0,\varepsilon],
\end{split}
\]
which contradicts the continuity of~$\varphi$ at $(0,\varepsilon)$.
In other words, for any~$f\in\mathcal F$ the equation~$\mathcal P_f$ has
no (continuous) solution on the entire domain~$\Omega$.
Since the set~$\mathcal F$ clearly contains infinitely many linearly independent functions, 
\emph{the quotient space ${\rm C}(\Omega)/\mathop{\rm im}P$ is infinite dimensional}.
Additionally assuming $f\in{\rm C}^\infty(\Omega)$ or $f\in{\rm C}^\omega(\Omega)$,
we also conclude that the quotient spaces
${\rm C}^\infty(\Omega)/P({\rm C}^\infty(\Omega))$ and
${\rm C}^\omega(\Omega)/P({\rm C}^\omega(\Omega))$ are infinite dimensional.
\end{example}

\begin{theorem}\label{thm:OnExistenceOfSolutionsOfInhomLinODEsWithParameter}
Given an open subset~$\Omega$ of the $(t,x)$-plane,
every inhomogeneous linear ordinary differential equation $Pu=f$ with $P\in\DO(\Omega)$ and $f\in {\rm C}(\Omega)$
admits solutions on the entire~$\Omega$
if and only if each connected component of~$\Omega$ is an $x$-simple set.
\end{theorem}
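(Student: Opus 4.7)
The plan is to prove both implications, relying on Theorem~\ref{thm:OnFundSolutionSetOfLinODEsWithParameter} for the sufficiency and on the structural Lemma~\ref{lem:OnOpenConnectedNonX-SimpleSets} (together with the divergent-integral trick of Examples~\ref{ex:1stOrderInhomLinODEWithoutSolutions} and~\ref{ex:InfiniteSetFamilyOf1stOrderInhomLinODEWithoutSolutions}) for the necessity.

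For the sufficiency, I first assume each connected component of $\Omega$ is $x$-simple and work on one component at a time, so that without loss of generality $\Omega$ itself is $x$-simple. Lemma~\ref{lem:OnOpenX-SimpleRegion} produces a smooth $\theta \in {\rm C}^\infty(\pr_t\Omega)$ whose graph lies in $\Omega$, and Theorem~\ref{thm:OnFundSolutionSetOfLinODEsWithParameter} gives a fundamental set $\varphi^1,\dots,\varphi^p$ of $Pu=0$ with $\mathrm W(\varphi^1,\dots,\varphi^p)\ne 0$ throughout $\Omega$. I then use these ingredients in the parameter-dependent variation-of-parameters formula
\[
u(t,x) := \sum_{s=1}^p (-1)^{p-s} \varphi^s(t,x) \int_{\theta(t)}^x \psi^s(t,x')\,{\rm d}x',
\qquad
\psi^s := \frac{f}{g^p}\frac{\mathrm W(\varphi^1,\dots,\lefteqn{\varphi^s}\!\smash{\diagdown}\,,\dots,\varphi^p)}{\mathrm W(\varphi^1,\dots,\varphi^p)}.
\]
The decisive use of $x$-simplicity is that for every $(t,x)\in\Omega$ the segment joining $(t,\theta(t))$ to $(t,x)$ lies entirely in $\Omega_t$, so the integrand is defined on the whole integration path. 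Continuity of $u$ and of its $x$-derivatives up to order $p$ on $\Omega$ then follows from the continuity of $\theta$, $\varphi^s$ and $\psi^s$ together with a standard dominated-convergence argument on compact subsets of $\Omega$, while fiberwise classical theory (i.e., formula~\eqref{eq:particular_sol_from_wronskian}) ensures $Pu = f$.

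For the necessity, I argue contrapositively and suppose that some connected component of $\Omega$ is not $x$-simple. Lemma~\ref{lem:OnOpenConnectedNonX-SimpleSets} then yields (up to a reflection in $t$) parameters $\tilde t_0,\varepsilon>0,\tilde x_1\leqslant\tilde x_2$ and a closed subset $\Upsilon\subseteq\{\tilde t_0\}\times[\tilde x_1,\tilde x_2]$ with $(\tilde t_0,\tilde x_1),(\tilde t_0,\tilde x_2)\in\Upsilon$, $\Upsilon\cap\Omega=\varnothing$ and the punctured rectangle~\eqref{eq:SpecialRectangleDomain} contained in $\Omega$. I take the first-order operator
\[
P := \bigl((x-\tilde x_1)^2+(t-\tilde t_0)^2\bigr)\p_x \in \DO^\omega(\Omega),
\]
whose leading coefficient vanishes only at the point $(\tilde t_0,\tilde x_1)\notin\Omega$, together with $f\equiv 1\in {\rm C}(\Omega)$. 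If some $\varphi\in {\rm C}^1_x(\Omega)$ solved $Pu=f$, then for every $t\in[\tilde t_0-\varepsilon,\tilde t_0)$ the segment $\{t\}\times[\tilde x_1-\varepsilon,\tilde x_2+\varepsilon]$ avoids $\Upsilon\subset\{\tilde t_0\}$ and therefore lies in $\Omega$; integrating $\varphi_x=1/((x-\tilde x_1)^2+(t-\tilde t_0)^2)$ along it gives
\[
\varphi(t,\tilde x_2+\varepsilon)-\varphi(t,\tilde x_1-\varepsilon)
\geqslant\int_{\tilde x_1-\varepsilon}^{\tilde x_1+\varepsilon}\frac{{\rm d}x}{(x-\tilde x_1)^2+(t-\tilde t_0)^2}
=\frac{2}{|t-\tilde t_0|}\arctan\frac{\varepsilon}{|t-\tilde t_0|}\to+\infty
\]
as $t\to\tilde t_0{}^-$, contradicting continuity of $\varphi$ at the points $(\tilde t_0,\tilde x_1-\varepsilon),(\tilde t_0,\tilde x_2+\varepsilon)\in\Omega$.

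The main obstacle lies in the sufficiency direction: one must verify with some care that the variation-of-parameters formula above actually produces a function in ${\rm C}^p_x(\Omega)$. This comes down to checking that the smooth selection $\theta$ supplied by $x$-simplicity gives a globally valid continuous lower limit of integration, and that the resulting parameter-dependent integral and its first $p$ derivatives in $x$ depend continuously on $(t,x)$, uniformly on compact subsets of $\Omega$. Once this regularity is in hand, the necessity direction is essentially the explicit divergent-integral calculation already appearing in the motivating examples.
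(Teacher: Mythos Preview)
Your proof is correct and follows essentially the same route as the paper: the sufficiency uses Lemma~\ref{lem:OnOpenX-SimpleRegion}, Theorem~\ref{thm:OnFundSolutionSetOfLinODEsWithParameter}, and the variation-of-parameters formula~\eqref{eq:ParticularSolutionOfInhomLinODEsWithParameter} exactly as in the paper, while the necessity uses Lemma~\ref{lem:OnOpenConnectedNonX-SimpleSets} together with the same divergent-integral obstruction. The only cosmetic differences are that you place the singular factor $\bigl((x-\tilde x_1)^2+(t-\tilde t_0)^2\bigr)$ in the leading coefficient of~$P$ rather than in~$f$ (the paper takes $P=\p_x$ and $f=\bigl((x-\tilde x_1)^2+(t-\tilde t_0)^2\bigr)^{-1}$, citing Example~\ref{ex:InfiniteSetFamilyOf1stOrderInhomLinODEWithoutSolutions}), and that for the regularity $\psi\in{\rm C}^p_x(\Omega)$ the paper invokes the reduction to the first-order system~\eqref{eq:main_problem_system} rather than a direct dominated-convergence argument.
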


\begin{proof}
Without loss of generality, we may assume that the set~$\Omega$ itself is connected.

Suppose that the set~$\Omega$ is $x$-simple.
In view of Lemma~\ref{lem:OnOpenX-SimpleRegion}, there exists a function $\theta\in {\rm C}^\infty(I)$ with $I=\pr_t\Omega$
such that its graph is contained in~$\Omega$.
Consider an arbitrary $P\in\DO(\Omega)$ with $p=\ord P$.
Theorem~\ref{thm:OnFundSolutionSetOfLinODEsWithParameter} implies
that the equation $Pu=0$ admits a fundamental set of solutions on~$\Omega$
with Wronskian nonvanishing on the entire~$\Omega$, $\{\varphi^s, s=1,\dots,p\}$.
Using the Lagrange method of variation of constants, for any $f\in {\rm C}(\Omega)$
the general solution of the equation $Pu=f$ can be represented in the form $u=\psi+\sum_{s=1}^p\zeta^s\varphi^s$.
Here the tuple $(\zeta^1,\dots,\zeta^p)$ runs through ${\rm C}(I,\mathbb R^p)$
and $\psi\in {\rm C}^p_x(\Omega)$ is a particular solution of this equation that is defined~by (cf.\ \eqref{eq:particular_sol_from_wronskian})
\begin{gather}\label{eq:ParticularSolutionOfInhomLinODEsWithParameter}
\psi(t,x)=\sum_{s=1}^p\varphi^s(t,x)\int_{\theta(t)}^x\psi^s(t,x')\,{\rm d}x', \quad (t,x)\in\Omega
\end{gather}
with
\[
\psi^s:=(-1)^{p-s}\frac f{\lcoef P}\frac{ W(\varphi^1,\dots,\lefteqn{\varphi^s}\!\smash{\diagdown}\,,\dots,\varphi^p)}{\mathrm W(\varphi^1,\dots,\varphi^p)}, \quad
s=1,\dots,p.
\]
For proving $\psi\in {\rm C}^p_x(\Omega)$ it suffices to switch from the equation~\eqref{eq:main_problem}
to the equivalent linear system of first-order ordinary differential equations
in the normal form~\eqref{eq:main_problem_system} with $A$ and~$F$ defined by~\eqref{eq:matrix_system}.
In other words, the equation $Pu=f$ possesses a family of solutions
that are continuous on the entire~$\Omega$ and parameterized by $p$ arbitrary continuous functions of~$t$.

Conversely, let $\Omega$ be an open set that is not $x$-simple.
In view of Lemma~\ref{lem:OnOpenConnectedNonX-SimpleSets},
there exist $\tilde t_0,\varepsilon,\tilde x_1,\tilde x_2\in\mathbb R$ with $\varepsilon>0$ and $\tilde x_1\leqslant\tilde x_2$
such that up to reflections in~$t$,
the set~$\Omega$ contains
a subset of the form $[\tilde t_0-\varepsilon,\tilde t_0]\times[\tilde x_1-\varepsilon,\tilde x_2+\varepsilon]\setminus\Upsilon$,
where $\Upsilon$ is a closed subset of $\{\tilde t_0\}\times[\tilde x_1,\tilde x_2]$
that is disjoint from~$\Omega$ and contains the points $(\tilde t_0,\tilde x_1)$ and $(\tilde t_0,\tilde x_2)$.
The equation \smash{$u_x=\big((x-\tilde x_1)^2+(t-\tilde t_0)^2\big)^{-1}$}
has no (continuous) solution on the entire domain $\Omega$;
cf.\ Example~\ref{ex:InfiniteSetFamilyOf1stOrderInhomLinODEWithoutSolutions}.
\end{proof}

If a connected component of an open set $\Omega$ is not $x$-simple,
then in fact we can show much more than just
the existence of an inhomogeneous linear ordinary differential equation $Pu=f$ with $P\in\DO(\Omega)$ and $f\in {\rm C}(\Omega)$
that possesses no continuous solutions on the entire~$\Omega$.

\begin{theorem}\label{thm:QuotientSpacesForLinODEOpsWithParameter}
If a connected component of an open set of~$\Omega$ is not $x$-simple,
then for each~$P\in\DO(\Omega)$
the quotient space ${\rm C}(\Omega)/\mathop{\rm im}P$ is infinite dimensional.
\end{theorem}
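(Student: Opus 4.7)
The plan is to exhibit an explicit infinite family $\{f_n\}_{n\in\mathbb N}\subset {\rm C}(\Omega)$ whose classes in ${\rm C}(\Omega)/\mathop{\rm im}P$ are linearly independent, generalising the one-operator instance in Example~\ref{ex:InfiniteSetFamilyOf1stOrderInhomLinODEWithoutSolutions}. First I would reduce to the case that $\Omega$ itself is connected and not $x$-simple: a non-$x$-simple connected component $\Omega_0$ is clopen in the open set $\Omega$, so restriction ${\rm C}(\Omega)\to {\rm C}(\Omega_0)$ is surjective (extension by zero provides a section) and it maps $\mathop{\rm im}P$ into $\mathop{\rm im}P|_{\Omega_0}$, inducing a surjection ${\rm C}(\Omega)/\mathop{\rm im}P\twoheadrightarrow {\rm C}(\Omega_0)/\mathop{\rm im}P|_{\Omega_0}$. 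Lemma~\ref{lem:OnOpenConnectedNonX-SimpleSets} then yields, up to a $t$-reflection, the configuration $\tilde t_0,\varepsilon,\tilde x_1\leqslant\tilde x_2,\Upsilon$ with the punctured rectangle $[\tilde t_0-\varepsilon,\tilde t_0]\times[\tilde x_1-\varepsilon,\tilde x_2+\varepsilon]\setminus\Upsilon\subset\Omega$; the open strip $S:=(\tilde t_0-\varepsilon,\tilde t_0)\times(\tilde x_1-\varepsilon,\tilde x_2+\varepsilon)\subset\Omega$ is $x$-simple, so Theorem~\ref{thm:OnFundSolutionSetOfLinODEsWithParameter} provides a fundamental set $\varphi^1,\dots,\varphi^p$ of $Pu=0$ on $S$ with nowhere vanishing Wronskian.

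For each $n\in\mathbb N$ I define
\[
f_n(t,x):=\chi(t)\,g^p(t,x)\,r(t,x)^{-(n+1)},\qquad r(t,x):=\sqrt{(x-\tilde x_1)^2+(t-\tilde t_0)^2},
\]
where $\chi\in {\rm C}(\mathbb R)$ is a bump with $\chi(\tilde t_0)=1$ supported in a small neighbourhood of $\tilde t_0$. Each $f_n\in {\rm C}(\Omega)$ because the only singular point of $r^{-(n+1)}$, namely $(\tilde t_0,\tilde x_1)\in\Upsilon$, is excluded from $\Omega$; the factor $g^p$ ensures that $f_n/g^p=\chi(t)\,r(t,x)^{-(n+1)}$, simplifying the variation-of-constants computation.

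Assuming for contradiction that $Pu=\sum_{k=1}^N a_k f_k$ has a solution $u\in {\rm C}^p_x(\Omega)$ with $a_N\ne 0$, I apply the variation-of-constants formula~\eqref{eq:ParticularSolutionOfInhomLinODEsWithParameter} on $S$ with base point $\tilde x_1-\varepsilon$ and combine it with the fact that $u,u_1,\dots,u_{p-1}$ have finite limits as $t\to\tilde t_0^-$ at both boundary points $(\tilde t_0,\tilde x_1-\varepsilon),(\tilde t_0,\tilde x_2+\varepsilon)\in\Omega$. This determines the coefficients $\zeta^s(t)$ via the invertibility of the Wronskian matrix at $x=\tilde x_1-\varepsilon$ and constrains each integral $\int_{\tilde x_1-\varepsilon}^{\tilde x_2+\varepsilon}\psi^s(t,x'){\rm d}x'$. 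Applying the rescaling $x'=\tilde x_1+(\tilde t_0-t)y$ yields the leading-order asymptotics
\[
\int_{\tilde x_1-\varepsilon}^{\tilde x_2+\varepsilon}\frac{\mathrm W(\varphi^1,\dots,\lefteqn{\varphi^s}\!\smash{\diagdown}\,,\dots,\varphi^p)}{\mathrm W(\varphi^1,\dots,\varphi^p)}(t,x')\,r(t,x')^{-(k+1)}{\rm d}x'\sim C_{k,s}\,|t-\tilde t_0|^{-k}\quad\text{as}\quad t\to\tilde t_0^-,
\]
with $C_{k,s}$ equal to a limiting Wronskian ratio at $(\tilde t_0,\tilde x_1)$ from within $S$ multiplied by the positive constant $\int_{-\infty}^\infty(y^2+1)^{-(k+1)/2}{\rm d}y$. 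Since distinct powers $|t-\tilde t_0|^{-k}$ are linearly independent, matching the polynomial-in-$|t-\tilde t_0|^{-1}$ contribution $\sum_k a_k C_{k,s}|t-\tilde t_0|^{-k}$ against the growth scale dictated by the homogeneous factors $\varphi^s(t,\tilde x_2+\varepsilon)$ and $\zeta^s(t)$ forces, by downward induction on $k$, $a_N=\dots=a_1=0$, a contradiction.

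The main technical obstacle is to verify that the leading coefficients $(C_{k,s})_{s=1}^p$ do not vanish simultaneously at each order $k$. I would select the fundamental system via initial data at a point $(\tilde t_0,\theta_0)\in\Omega$ with $\theta_0\in(\tilde x_1-\varepsilon,\tilde x_1)$, so that the Wronskian is normalised at $\theta_0$ and continuous dependence of ODE solutions on the parameter $t$ propagates the continuity of the Wronskian ratios to a one-sided neighbourhood of $(\tilde t_0,\tilde x_1)$ within $S$; the vector $(C_{k,s})_{s=1}^p$ then emerges, via Cramer's rule applied to the Lagrange variation-of-constants system, as a column of the inverse of the extended fundamental matrix and hence contains at least one nonzero entry. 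A secondary subtlety is that the prefactors $\varphi^s(t,\tilde x_2+\varepsilon)$ and the coefficients $\zeta^s(t)$ may have exponential growth or decay as $t\to\tilde t_0^-$; this is handled by separating the polynomial-in-$|t-\tilde t_0|^{-1}$ asymptotic contributions of the rescaled integrals from the non-polynomial growth scales produced by the homogeneous equation alone, so that polynomial and non-polynomial parts must vanish independently.
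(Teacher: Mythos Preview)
Your construction has a genuine gap at the asymptotic step. The point $(\tilde t_0,\tilde x_1)$ lies in $\Upsilon$, hence outside $\Omega$, so the coefficients of $P\in\DO(\Omega)$ need not extend continuously there, and the Wronskian ratios $\mathrm W^s/\mathrm W$ evaluated along the rescaled path can fail to have a finite nonzero limit as $t\to\tilde t_0^-$. Concretely, take $p=1$ and $P=\p_x-r^{-2}$ (with $r$ as you defined). Normalising $\varphi^1(t,\tilde x_1-\varepsilon)=1$ gives $\varphi^1(t,\tilde x_1+\tau y)\to+\infty$ for every fixed $y$ as $\tau:=\tilde t_0-t\to0^+$, so $1/\varphi^1\to0$ and the rescaled integral is $o(\tau^{-k})$, not $\sim C_{k,1}\tau^{-k}$ with $C_{k,1}\ne0$. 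Worse, for this same $P$ your $f_1=\chi\,r^{-2}$ actually lies in $\mathop{\rm im}P$: the function $u=-\chi(t)$ satisfies $Pu=\chi\,r^{-2}=f_1$. So the fixed family $\{f_n\}$ cannot witness infinite-dimensionality uniformly in $P$; the right-hand sides must be adapted to the operator. Your proposed separation into ``polynomial and non-polynomial growth scales'' does not help, because the integrals themselves inherit exponential factors from $\mathrm W^s/\mathrm W$ and are not polynomial-scale objects in general.

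The paper avoids this difficulty by never passing to the limit $t\to\tilde t_0$. It fixes a discrete sequence $t^*_k=\tilde t_0-\varepsilon/k$ and, for each $k$, uses only the finite values of the fundamental solutions and sub-Wronskians on the compact slice $\{t^*_k\}\times[\tilde x_1-\varepsilon,\tilde x_2+\varepsilon]\subset\Omega$ to choose a bump $f^k$ (supported in $t$ near $t^*_k$, concentrated in $x$ near $\tilde x_1$) large enough to force $|\psi(t^*_k,x^*_k)|>k$ for every solution $\psi$ on an auxiliary $x$-simple set $\Xi\supset S$. The sum $f=\sum_k f^k\chi^k$ then admits no global solution, and the freedom in the construction yields infinitely many linearly independent classes. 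Your approach would go through if the coefficients of $P$ extended continuously across $\Upsilon$, but for arbitrary $P\in\DO(\Omega)$ the adaptive, slice-by-slice calibration is essential.
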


\begin{proof}
We again apply Lemma~\ref{lem:OnOpenConnectedNonX-SimpleSets} to obtain the existence, up to reflections in~$t$,
of a subset of the ``rectangular'' shape in~$\Omega$.
Below we continue to use the notation of this lemma.
There exist $\delta_k\in\mathbb R_+$, $k\in\mathbb N$, with $\delta_k\geqslant\delta_{k+1}$ for any $k\in\mathbb N$
such that
\begin{gather*}
\begin{split}
\Xi:=&(\tilde t_0-\varepsilon-\delta_1,\tilde t_0)\times(\tilde x_1-\varepsilon-\delta_1,\tilde x_2+\varepsilon+\delta_1)\\
&\cup[\tilde t_0,\tilde t_0+\delta_1)\times(\tilde x_1-\varepsilon-\delta_1,\tilde x_1-\varepsilon/2)
\cup\mathop{\cup}\limits_{k=2}^\infty[\tilde t_0,\tilde t_0+\delta_k)\times[\tilde x_1-\varepsilon/k,\tilde x_1-\varepsilon/(k+1))
\end{split}
\end{gather*}
is a subset of~$\Omega$; see Figure~\ref{fig:QuotientSpacesForLinODEOpsWithParameter}.
By construction, the set~$\Xi$ is $x$-simple and open.
In the capacity of a smooth function~$\theta$ related to~$\Xi$ according to Lemma~\ref{lem:OnOpenX-SimpleRegion}, we can choose
the constant function $\theta(t)=\tilde x_1-\varepsilon$, $t\in(\tilde t_0-\varepsilon-\delta_1,\tilde t_0+\delta_1)$.

\begin{figure}
\centering
\includegraphics[width=1.\linewidth]{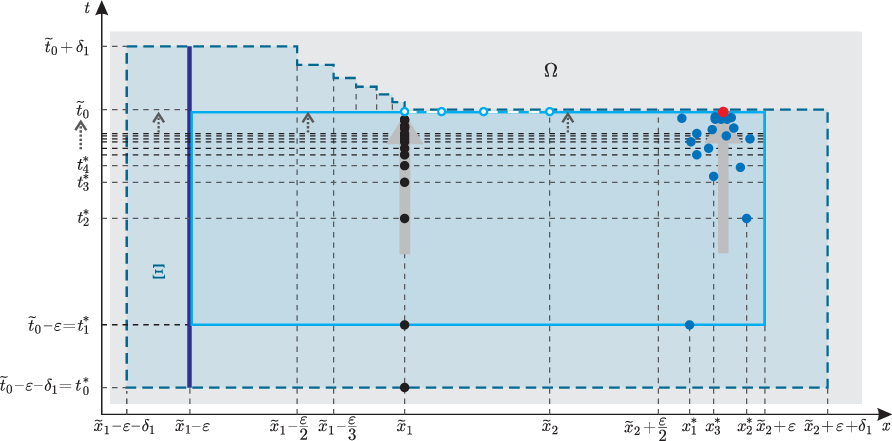}
\caption{Geometric constructions in the proof of Theorem~\ref{thm:QuotientSpacesForLinODEOpsWithParameter}}
\label{fig:QuotientSpacesForLinODEOpsWithParameter}
\end{figure}

For an arbitrary operator~$P\in\DO(\Omega)$,
we consider the corresponding inhomogeneous linear differential equations, $\mathcal P_f$: $Pu=f$ with $f\in {\rm C}(\Omega)$.
We will prove that there exist an infinite number of linearly independent continuous right hand sides~$f$ such that
for any solution~$\psi$ of~$\mathcal P_f$ on~$\Xi$
we have a sequence $\big((t^*_k,x^*_k), k\in\mathbb N\big)$ of points in~$\Xi$
with $t^*_k\to\tilde t_0$, $x^*_k\to\tilde x_3\in(\tilde x_2,\tilde x_2+\varepsilon]$ and
$\psi(t^*_k,x^*_k)\to\infty$ as $k\to\infty$.
(We will choose $t^*_k=\tilde t_0-\varepsilon/k$.)
This means that such solutions cannot be extended to (continuous) solutions of~$\mathcal P_f$ on~the entire~$\Omega$.
In other words, this implies that the equation~$\mathcal P_f$ admits no continuous solutions on~$\Omega$.

We elucidate the basic ideas of the proof by first treating the particular case of first-order differential operators.
Thus, we consider an arbitrary operator~$P$ of the form $P:=h^1(t,x)\p_x+h^0(t,x)$
with $h^0,h^1\in {\rm C}(\Omega)$ and $h^1\ne0$ on~$\Omega$.
The function $\varphi\in {\rm C}^1_x(\Xi)$ defined by
\[
\varphi(t,x):=\exp\int_{\tilde x_1-\varepsilon}^x\frac{h^0(t,x')}{h^1(t,x')}\,{\rm d}x',\quad (t,x)\in\Xi,
\]
constitutes
a fundamental set of solutions of the equation~$\mathcal P_0$ on~$\Xi$ and satisfies the initial condition
$\varphi=1$ at $x=\tilde x_1-\varepsilon$ for $t\in(\tilde t_0-\varepsilon-\delta_1,\tilde t_0+\delta_1)$.
Note that this solution is positive on~$\Xi$,
and this specific feature of first-order operators from~$\DO(\Omega)$ has no counterpart in higher orders.
We set $t^*_0:=\tilde t_0-\varepsilon-\delta_1$ and $t^*_k:=\tilde t_0-\varepsilon/k$, $k\in\mathbb N$.
For each~$k\in\mathbb N$, there exist $b_k>0$ and $\varepsilon_k$ with $0<\varepsilon_k\leqslant\varepsilon$
such that $|h^1(t^*_k,x)|\,\varphi(t^*_k,x)\leqslant b_k$
for $x\in\bar B_{\varepsilon_k}(\tilde x_1)=\{x\in\mathbb R\mid|x-\tilde x_1|\leqslant\varepsilon_k\}$.
We take a 
continuous function $f^k$ of~$x\in\mathbb R$ and a continuous function $\chi^k$ of~$t\in\mathbb R$,
respectively, satisfying the properties
\begin{gather*}
\supp f^k\subseteq\bar B_{\varepsilon_k}(\tilde x_1), \quad f^k(x)\geqslant0\ \ \forall x\in\mathbb R, \quad
\int_{-\infty}^{+\infty}f^k(x)\,{\rm d}x\geqslant kb_k\left(1+\frac1{\varphi(t^*_k,\tilde x_2+\varepsilon)}\right),\\
\chi^k(t^*_k)=1,\quad \chi^k(t)\geqslant0\ \ \forall t\in\mathbb R,\quad \supp \chi^k\subseteq[t^*_{k-1},t^*_{k+1}]
\end{gather*}
and construct the function $f:=\left(\sum_{k=1}^\infty f^k\chi^k\right)\big|_\Omega$, which is continuous on~$\Omega$.
Any solution~$\psi$ of~$\mathcal P_f$ on~$\Xi$ can be represented in the form
\[
\psi(t,x)=\varphi(t,x)\left(\psi(t,\tilde x_1-\varepsilon)+\int_{\tilde x_1-\varepsilon}^x\frac{f(t,x')}{h^1(t,x')\varphi(t,x')}\,{\rm d}x'\right).
\]
Since $\psi\in {\rm C}^1_x(\Xi)$, it is bounded on the line segment $[\tilde t_0-\varepsilon,\tilde t_0]\times\{\tilde x_1-\varepsilon\}$,
i.e., there exists a constant $C>0$ such that $|\psi(t,\tilde x_1-\varepsilon)|\leqslant C$ for any $t\in[\tilde t_0-\varepsilon,\tilde t_0]$.
Estimating the value of~$\psi$ at $(t^*_k,\tilde x_2+\varepsilon)$ for $k>C$, we obtain
\begin{gather*}
|\psi(t^*_k,\tilde x_2+\varepsilon)|\geqslant\varphi(t^*_k,\tilde x_2+\varepsilon)
\left(\frac1{b_k}kb_k\left(1+\frac1{\varphi(t^*_k,\tilde x_2+\varepsilon)}\right)-C\right)>k,
\end{gather*}
which completes the proof for $\ord P=1$. Here $x^*_k:=\tilde x_2+\varepsilon$ for any $k\in\mathbb N$.

Now we consider the general case of $\ord P=:p$.
Following the proof of Theorem~\ref{thm:OnFundSolutionSetOfLinODEsWithParameter}
we choose the fundamental set of solutions $\{\varphi^s, s=1,\dots,p\}$ of the homogeneous equation~$\mathcal P_0$ on~$\Xi$
that satisfy the initial conditions $\varphi^s_{s'-1}=\delta_{ss'}$, $s'=1,\dots,p$,
at $x=\tilde x_1-\varepsilon$ with $t$ varying through $(\tilde t_0-\varepsilon-\delta_1,\tilde t_0+\delta_1)$.
Recall that $\delta_{ss'}$ denotes the Kronecker delta.
The Wronskian $\mathrm W:=\mathrm W(\varphi^1,\dots,\varphi^p)$ does not vanish on~$\Xi$,
and thus it does not vanish at the points $(t^*_k,\tilde x^1)$, $k\in\mathbb N$,
where again $t^*_0:=\tilde t_0-\varepsilon-\delta_1$ and $t^*_k:=\tilde t_0-\varepsilon/k$, $k\in\mathbb N$.

We fix $k\in\mathbb N$ and set
$\varphi^{k1s}:=\varphi^s(t^*_k,\cdot)\in {\rm C}^p\big((\tilde x_1-\varepsilon-\delta_1,\tilde x_2+\varepsilon+\delta_1)\big)$,
$s=1,\dots,p$.
Now choose $s_1\in\{1,\dots,p\}$ such that $|\varphi^{k1s_1}(\tilde x_1)|=\max_s|\varphi^{k1s}(\tilde x_1)|$.
This absolute value is greater than zero since $\mathrm W(t^*_k,\tilde x^1)\ne0$.
Set 
\[
\varphi^{k2s_1}:=\varphi^{k1s_1},\quad
\varphi^{k2s}:=\varphi^{k1s}-\frac{\varphi^{k1s}(\tilde x_1)}{\varphi^{k1s_1}(\tilde x_1)}\varphi^{k1s_1},\ \ s\ne s_1.
\]
For the transition matrix from $(\varphi^{k1s})_s$ to $(\varphi^{k2s})_s$,
its determinant equals one,
the absolute value of each of its entries is not greater than one,
and its inverse has the same properties.
Therefore, the Wronskian of~$(\varphi^{k2s})_s$ coincides with~$\mathrm W(t^*_k,\cdot)$.
Then we recursively iterate the above procedure, repeating it for ascending orders of derivatives.
More specifically, on the $s'$th step, where $s'\in\{1,\dots,p-1\}$,
we choose $s_{s'}\in N_{s'-1}:=\{1,\dots,p\}\setminus\{s_1,\dots,s_{s'-1}\}$ such that
\[|\varphi^{ks's_{s'}}_{s'-1}(\tilde x_1)|=\max\{|\varphi^{ks's}_{s'-1}(\tilde x_1)\mid s\in N_{s'-1}\}.\]
Recall that a subscript of a function denotes the corresponding number of differentiations with respect to~$x$, $f_s:=\p_x^sf$.
The above maximal absolute value is greater than zero since the Wronskian of~$(\varphi^{ks's})_s$ coincides with~$\mathrm W(t^*_k,\cdot)$
and hence it does not vanish at $\tilde x^1$.
We define
\begin{gather*}
\varphi^{k,s'+1,s_i}:=\varphi^{ks's_i},\ \ i=1,\dots,s',\\
\varphi^{k,s'+1,s}:=\varphi^{ks's}-\frac{\varphi^{ks's}_{s'-1}(\tilde x_1)}{\varphi^{ks's_{s'}}_{s'-1}(\tilde x_1)}\varphi^{ks's_{s'}},\ \ s\in N_{s'}.
\end{gather*}
For the transition matrix from $(\varphi^{ks's})_s$ to $(\varphi^{k,s'+1,s})_s$,
again its determinant equals one,
the absolute value of each of its entries is not greater than one,
and its inverse has the same properties.

The above procedure results in the functions
$\varphi^{kps}\in {\rm C}^p\big((\tilde x_1-\varepsilon-\delta_1,\tilde x_2+\varepsilon+\delta_1)\big)$, $s=1,\dots,p$,
with Wronskian coinciding with~$\mathrm W(t^*_k,\cdot)$.
Since the Wronskian~$\mathrm W$ does not vanish on~$\Xi$,
there exists $x^*_k\in[\tilde x_2+\varepsilon/2,\tilde x_2+\varepsilon]$
such that $\varphi^{kps_p}(x^*_k)\ne0$.
We also have $\varphi^{kps_i}_{j-1}(\tilde x_1)=0$, $1\leqslant j<i\leqslant p$.
Consequently, the $(p-1)$th order sub-Wronskians $\mathrm W^{ks_i}:=\mathrm W(\varphi^{kps})_{s\ne s_i}$
satisfy the conditions $\mathrm W^{ks_i}(\tilde x_1)=0$, $i=1,\dots,p-1$, and $\mathrm W^{ks_p}(\tilde x_1)\ne0$,
and hence there exist $b_k>0$ and $\varepsilon_k$ with $0<\varepsilon_k\leqslant\varepsilon$
such that
\[
\left|\frac{\mathrm W^{ks_p}(x)}{(\mathrm W \cdot\lcoef P)(t^*_k,x)}\right|\geqslant b_k,\quad
\left|\frac{\mathrm W^{ks_i}(x)}{(\mathrm W \cdot \lcoef P)(t^*_k,x)}\right|\leqslant
\frac{b_k|\varphi^{kps_p}(x^*_k)|}{4p\max_s|\varphi^{kps}(x^*_k)|},\quad
i=1,\dots,p-1,
\]
for any $x\in\bar B_{\varepsilon_k}(\tilde x_1)$.
We pick a 
continuous function $f^k$ of~$x\in\mathbb R$ and a continuous function $\chi^k$ of~$t\in\mathbb R$,
respectively, satisfying the properties
\begin{gather*}
\supp f^k\subseteq\bar B_{\varepsilon_k}(\tilde x_1), \quad f^k(x)\geqslant0\ \ \forall x\in\mathbb R,
\\[.5ex]
\frac{2k}{b_k|\varphi^{kps_p}(x^*_k)|}\left(1+\!\sum_{s=1}^p|\varphi^{kps}(x^*_k)|\right)
\leqslant\int_{-\infty}^{+\infty}\!f^k(x)\,{\rm d}x\leqslant
\frac{4k}{b_k|\varphi^{kps_p}(x^*_k)|}\left(1+\!\sum_{s=1}^p|\varphi^{kps}(x^*_k)|\right)\!,
\\[.5ex]
\chi^k(t^*_k)=1,\quad \chi^k(t)\geqslant0\ \ \forall t\in\mathbb R,\quad \supp \chi^k\subseteq[t^*_{k-1},t^*_{k+1}]
\end{gather*}
and construct the function $f:=\left(\sum_{k=1}^\infty f^k\chi^k\right)\big|_\Omega$.
Any solution~$\psi$ of~$\mathcal P_f$ on~$\Xi$ can be represented at $t=t^*_k$ in the form
\[
\psi(t^*_k,x)=\sum_{s=1}^p\varphi^{kps}(x)\left(\,\sum_{s'=1}^pd_{kss'}\psi_{s'-1}(t^*_k,\tilde x_1-\varepsilon)
+\int_{\tilde x_1-\varepsilon}^x\frac{(-1)^{p-s}f(t,x')\mathrm W^{ks}(x')}{(\mathrm W\lcoef P)(t^*_k,x')}\,{\rm d}x'\right).
\]
Here the matrix $(d_{kss'})_{s,s'=1,\dots,p}$ is the inverse
of the Wronsky matrix $(\varphi^{kps}_{s'-1})_{s,s'=1,\dots,p}$ at $\tilde x_1-\varepsilon$,
which coincides with the transition matrix from $(\varphi^{k1s})_s$ to $(\varphi^{kps})_s$
in view of $\varphi^{k1s}_{s'-1}(\tilde x_1-\varepsilon)=\delta_{ss'}$.
Hence the set of the matrices $(d_{kss'})_{s,s'=1,\dots,p}$ with $k$ running through $\mathbb N$ is bounded.
Since $\psi\in {\rm C}^\infty(\Xi)$, the function~$\psi$ and each of its derivatives with respect to~$x$
are bounded on the line segment $[\tilde t_0-\varepsilon,\tilde t_0]\times\{\tilde x_1-\varepsilon\}$.
As a result, there exists a constant $C>0$ such that $|\sum_{s'=1}^pd_{kss'}\psi_{s'-1}(t,\tilde x_1-\varepsilon)|\leqslant C$
for any $t\in[\tilde t_0-\varepsilon,\tilde t_0]$ and for any $s\in\{1,\dots,p\}$.

Minorizing the value of~$|\psi|$ at $(t^*_k,x^*_k)$ for $k>C$,
we use the above representation for $\psi(t^*_k,x)$, compute a lower bound of the absolute value of the summand
\[
\varphi^{kps}(x^*_k)\int_{\tilde x_1-\varepsilon}^{x^*_k}\frac{(-1)^{p-s}f(t,x')\mathrm W^{ks}(x')}{(\mathrm W\lcoef P)(t^*_k,x')}\,{\rm d}x'
\]
and subtract upper bounds of the absolute values of the other summand from this lower bound.
We arrive at
\begin{gather*}
\begin{split}
|\psi(t^*_k,x^*_k)|\geqslant{}
&|\varphi^{kps_p}(x^*_k)|b_k\frac{2k}{b_k|\varphi^{kps_p}(x^*_k)|}\left(1+\sum_{s=1}^p|\varphi^{kps}(x^*_k)|\right)
\\
&-\sum_{s\ne s_p}|\varphi^{kps}(x^*_k)|\frac{b_k|\varphi^{kps_p}(x^*_k)|}{4p\max_{s'}|\varphi^{kps'}(x^*_k)|}
\frac{4k}{b_k|\varphi^{kps_p}(x^*_k)|}\left(1+\sum_{s=1}^p|\varphi^{kps}(x^*_k)|\right)
\\
&-C\sum_{s=1}^p|\varphi^{kps}(x^*_k)|
\\
{}={}&k+(k-C)\sum_{s=1}^p|\varphi^{kps}(x^*_k)|>k
\end{split}
\end{gather*}
if $k>C$.
Therefore, $|\psi(t^*_k,x^*_k)|\to+\infty$ as $k\to\infty$.
There is a convergent subsequence of the sequence $(x^*_k)_{k\in\mathbb N}^{}$,
and the limit of this subsequence belongs to the interval $[\tilde x_2+\varepsilon/2,\tilde x_2+\varepsilon]$, 
which contradicts the fact that the function~$\psi$ is continuous on~$\Omega$.
\end{proof}

An inspection of the above proof also shows the following:

\begin{corollary}
If a connected component of an open set of~$\Omega$ is not $x$-simple,
then for each~$P\in\DO^\infty(\Omega)$
the quotient space ${\rm C}^\infty(\Omega)/P({\rm C}^\infty(\Omega))$ is infinite dimensional.
\end{corollary}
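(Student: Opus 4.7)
My plan is to inspect the proof of Theorem~\ref{thm:QuotientSpacesForLinODEOpsWithParameter} and verify that every single construction there can be carried out within the smooth category, so that no new ideas are needed. Concretely, I take $P\in\DO^\infty(\Omega)$ and reuse the same geometric data produced by Lemma~\ref{lem:OnOpenConnectedNonX-SimpleSets}: the values $\tilde t_0,\varepsilon,\tilde x_1,\tilde x_2$, the closed set $\Upsilon\subset\{\tilde t_0\}\times[\tilde x_1,\tilde x_2]$ disjoint from~$\Omega$, the $x$-simple open subset $\Xi\subset\Omega$ and the constant section $\theta(t)\equiv\tilde x_1-\varepsilon$. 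Since the coefficients of $P$ now belong to ${\rm C}^\infty(\Omega)$, the fundamental set $\{\varphi^s,\,s=1,\dots,p\}$ of solutions of $Pu=0$ on~$\Xi$ constructed via the initial value problem at $x=\theta(t)$ lies in ${\rm C}^\infty(\Xi)$ by standard smooth dependence on parameters for linear ODEs. All the subsequent estimates -- maximizing the column entries of $(\varphi^{ks's}_{s'-1}(\tilde x_1))$, bounding $|\mathrm W^{ks}/(\mathrm W\cdot\lcoef P)|$ on $\bar B_{\varepsilon_k}(\tilde x_1)$, and choosing the ``test mass'' constants $b_k$ and the target points $x^*_k\in[\tilde x_2+\varepsilon/2,\tilde x_2+\varepsilon]$ -- depend only on continuity of the $\varphi^s$ and of $\lcoef P$, so they remain valid verbatim.

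Next, I replace the continuous bump functions $f^k$ of~$x$ and $\chi^k$ of~$t$ by functions in ${\rm C}^\infty_{\rm c}(\mathbb R)$ with exactly the same support, sign and integral/value properties; such smooth bumps obviously exist. Then $f:=\big(\sum_{k=1}^\infty f^k\chi^k\big)\big|_\Omega$ belongs to ${\rm C}^\infty(\Omega)$: by construction the summands have supports with $t$-projections contained in $[t^*_{k-1},t^*_{k+1}]$, so at most two summands are nonzero at any interior point and the full family accumulates only at $(\tilde t_0,\tilde x_1)\in\Upsilon$, which lies \emph{outside}~$\Omega$. Hence on every compact subset of~$\Omega$ the sum is finite and all derivatives exist classically. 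The blow-up argument at $(t^*_k,x^*_k)$ uses only the continuity of an alleged solution $\psi$ and the existence of the indicated integral representation, both of which hold for $\psi\in {\rm C}^\infty(\Omega)$. Thus $f\notin P({\rm C}^\infty(\Omega))$.

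To upgrade this from a single obstruction to infinite dimensionality of the quotient, I partition $\mathbb N$ into infinitely many infinite subsets $A_1,A_2,\dots$ chosen so that for each fixed $i$ the indices $k\in A_i$ are so sparse that the supports of the corresponding $\chi^k$ are pairwise disjoint, and set $f_i:=\big(\sum_{k\in A_i}f^k\chi^k\big)\big|_\Omega\in {\rm C}^\infty(\Omega)$. For any nontrivial finite linear combination $g=\sum_{j=1}^m c_jf_{i_j}$ with $c_m\ne0$, along the subsequence of indices $k\in A_{i_m}$ with $k$ larger than any index appearing (through $\chi^{k'}$) in the other $f_{i_j}$'s, the right-hand side of $Pu=g$ at $t=t^*_k$ coincides with $c_m$ times the original contribution $f^k$; the same blow-up analysis as in Theorem~\ref{thm:QuotientSpacesForLinODEOpsWithParameter} yields $|\psi(t^*_k,x^*_k)|\geqslant|c_m|\,k - O(1)\to\infty$ along $A_{i_m}$, so $g\notin P({\rm C}^\infty(\Omega))$. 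Hence the classes $[f_i]$ are linearly independent in ${\rm C}^\infty(\Omega)/P({\rm C}^\infty(\Omega))$.

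The only delicate point is the final linear-independence step: I must choose the sparseness of the $A_i$ and the constants $c_{ki}$ (playing the role of the integrals of $f^k$) in such a way that contributions from $f_{i_j}$ with $j<m$ enter the Lagrange-type representation of $\psi$ only as bounded perturbations along the chosen subsequence, while the $c_m f^k$-term still forces divergence at rate $\sim k$. This is exactly the role of the estimate $|\psi(t^*_k,x^*_k)|>k$ in the proof above, together with the disjointness of the $\chi^k$-supports within each $A_i$, so no genuinely new estimate is required; everything reduces to bookkeeping of the same inequalities.
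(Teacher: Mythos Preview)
Your approach is exactly the paper's: the corollary's proof there is literally the single sentence ``An inspection of the above proof also shows the following'', referring to Theorem~\ref{thm:QuotientSpacesForLinODEOpsWithParameter}, and you have carried out that inspection. Your verification that the construction lives in the smooth category is correct---smooth coefficients give smooth fundamental solutions on~$\Xi$, smooth bumps replace continuous ones, and (once one additionally imposes $\varepsilon_k\to0$, which is free) the supports of $f^k\chi^k$ accumulate only at $(\tilde t_0,\tilde x_1)\notin\Omega$, so the locally finite sum defines an element of ${\rm C}^\infty(\Omega)$.

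The one imprecision is in your linear-independence step. If the $A_i$ genuinely \emph{partition} $\mathbb N$, then for every $k\in A_{i_m}$ the neighbours $k\pm1$ lie in some $A_{i_j}$, and since each $A_{i_j}$ is infinite the phrase ``$k$ larger than any index appearing in the other $f_{i_j}$'s'' can never be satisfied; contributions from $f^{k\pm1}$ may then interfere with the blow-up estimate. The easy fix is to drop the partition requirement and take the $A_i$ to be pairwise disjoint infinite subsets of $\mathbb N$ with the separation property that $k\in\bigcup_iA_i$ implies $k\pm1\notin\bigcup_iA_i$ (for instance, disjoint infinite subsets of $3\mathbb N$). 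Then at $t=t^*_k$ with $k\in A_{i_m}$ the right-hand side of $Pu=g$ is exactly $c_mf^k$, and your blow-up argument goes through verbatim. The paper's own proof of Theorem~\ref{thm:QuotientSpacesForLinODEOpsWithParameter} is equally terse on this point, announcing ``an infinite number of linearly independent'' right-hand sides without spelling out independence in the quotient, so you are filling in detail the paper omits.
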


\section{Distributional solutions of linear ordinary\\ differential equations with parameter}
\label{sec:DistrSolutionsOfLinODEsWithParameter}

In contrast to usual ordinary differential operators,
an operator~$P$ from $\DO^\infty(\Omega)$, where $\Omega$ is an $x$-simple open subset of~$\mathbb R^2$,
is never hypoelliptic.
At the same time, for any $f\in {\rm C}^\infty(\Omega)$
we can represent the general distributional solution of the equation $Pu=f$ on~$\Omega$
in terms of a fundamental set of smooth solutions of this equation.

\begin{proposition}\label{pro:DistrSolutionsOfLinODEsWithParameter}
Given an $x$-simple open subset~$\Omega$ of~$\mathbb R^2$,
an arbitrary $P\in\DO^\infty(\Omega)$ of order $p\in\mathbb N$ and an arbitrary $f\in {\rm C}^\infty(\Omega)$,
the general solution of the equation $Pu=f$ in $\mathcal D'(\Omega)$ can be represented in the~form
$u=T_\psi+\varphi^s\cdot(\zeta^s\otimes T_{\mathbf 1_{\mathbb R}})\big|_\Omega$,
where $T_\psi$ and $T_{\mathbf 1_{\mathbb R}}$ are the regular distributions
associated with a particular solution $\psi\in {\rm C}^\infty(\Omega)$ of this equation
and with the indicator function~$\mathbf 1_{\mathbb R}$ of~$\mathbb R$, respectively,
$\{\varphi^s, s=1,\dots,p\}$ is a fundamental set of smooth solutions of this equation on~$\Omega$
and each $\zeta^s$ runs through $\mathcal D'(\pr_t\Omega)$.
\end{proposition}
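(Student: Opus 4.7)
The plan is to reduce the inhomogeneous equation to the homogeneous one and then identify homogeneous distributional solutions via a fundamental matrix transformation. By Theorem~\ref{thm:OnFundSolutionSetOfLinODEsWithParameter}, the $x$-simplicity of $\Omega$ yields a fundamental set $\{\varphi^1,\dots,\varphi^p\}$ of solutions of $Pu=0$ with Wronskian nonvanishing on $\Omega$. Since the coefficients of $P$, the initial data (polynomials in the initial conditions from the construction in that theorem's proof), the reference function $\theta$ from Lemma~\ref{lem:OnOpenX-SimpleRegion}, and the right hand side $f$ are all smooth, the standard smoothness-on-parameters results for ODEs give $\varphi^s\in {\rm C}^\infty(\Omega)$, and the particular solution $\psi$ produced by formula~\eqref{eq:ParticularSolutionOfInhomLinODEsWithParameter} (together with Theorem~\ref{thm:OnExistenceOfSolutionsOfInhomLinODEsWithParameter}) lies in ${\rm C}^\infty(\Omega)$. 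Thus $PT_\psi=T_f$ in $\mathcal D'(\Omega)$, and any distributional solution $u$ of $Pu=f$ gives $\tilde u:=u-T_\psi$ with $P\tilde u=0$.

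Next I would rewrite the homogeneous equation distributionally as a first-order system. Setting $v^s:=\p_x^{s-1}\tilde u\in\mathcal D'(\Omega)$ for $s=1,\dots,p$, the relation $P\tilde u=0$ is equivalent to $v_1=Av$ with $A$ as in~\eqref{eq:matrix_system}. The fundamental matrix $\Phi:=(\p_x^{r-1}\varphi^s)_{r,s}$ is smooth, and its determinant equals the Wronskian, which is nowhere zero; hence $\Phi^{-1}$ is smooth on $\Omega$. I define $w:=\Phi^{-1}v$ componentwise as a vector of distributions on $\Omega$ (smooth-function multiplication being well defined on $\mathcal D'(\Omega)$). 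Differentiating, and using the identity $\p_x(\Phi^{-1})=-\Phi^{-1}(\p_x\Phi)\Phi^{-1}=-\Phi^{-1}A$, which is valid for smooth $\Phi$, I obtain
\[
\p_x w=\p_x(\Phi^{-1})v+\Phi^{-1}\p_x v=-\Phi^{-1}Av+\Phi^{-1}Av=0
\]
componentwise in $\mathcal D'(\Omega)$.

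At this point I would invoke the structure theorem from Appendix~\ref{sec:DistributionsWithVanishingPartialDerivatives}, which asserts that on the $x$-simple domain~$\Omega$ every $\mu\in\mathcal D'(\Omega)$ with $\p_x\mu=0$ is of the unique form $\mu=(\zeta\otimes T_{\mathbf 1_{\mathbb R}})\big|_\Omega$ for some $\zeta\in\mathcal D'(\pr_t\Omega)$. Applied componentwise, this produces uniquely determined $\zeta^s\in\mathcal D'(\pr_t\Omega)$ with $w^s=(\zeta^s\otimes T_{\mathbf 1_{\mathbb R}})\big|_\Omega$. The first component of $v=\Phi w$ then reads $\tilde u=v^1=\sum_{s=1}^p\varphi^s\,w^s=\varphi^s\cdot(\zeta^s\otimes T_{\mathbf 1_{\mathbb R}})\big|_\Omega$, which combined with $u=T_\psi+\tilde u$ is the asserted representation. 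Uniqueness of the tuple $(\zeta^s)$ follows from the uniqueness in the structure theorem together with the invertibility of~$\Phi$, while its arbitrariness comes from the fact that any such tuple, run through the above steps in reverse, yields a bona fide distributional solution.

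The main obstacle is genuinely the structure theorem in Appendix~\ref{sec:DistributionsWithVanishingPartialDerivatives}: everything else is a distributional translation of the classical variation-of-parameters argument, valid verbatim because $\Phi$ and $\Phi^{-1}$ are smooth. Locally, on a product of intervals, the result that $\p_x\mu=0$ forces $\mu$ to be a pullback under $\pr_t$ is a standard consequence of the tensor-product structure of $\mathcal D'$; but assembling these local primitives into a single global $\zeta^s\in\mathcal D'(\pr_t\Omega)$ compatible with every $x$-section of $\Omega$ is exactly where the $x$-simplicity hypothesis becomes essential, since it guarantees that the $x$-fibers of $\Omega$ are (possibly empty) open intervals and hence contractible along $x$, eliminating any monodromy obstruction to reconstructing $\zeta^s$ from its restrictions.
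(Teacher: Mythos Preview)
Your proof is correct and follows essentially the same route as the paper: reduce to the homogeneous equation by subtracting a smooth particular solution, pass to the equivalent first-order system via~\eqref{eq:matrix_system}, apply $\Phi^{-1}$ to reduce to $\p_x w=0$, and then invoke the structure theorem of Appendix~\ref{sec:DistributionsWithVanishingPartialDerivatives}. The paper merely packages the system step as a forward reference to Proposition~\ref{pro:DistrSolutionsOfLinSystemsOfODEsWithParameter}, whose proof is exactly your $\Phi^{-1}$ computation plus Theorem~\ref{thm:distributional_derivative_tensor_product_y_simple}.
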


The proof of this proposition follows from
Proposition~\ref{pro:DistrSolutionsOfLinSystemsOfODEsWithParameter} below, 
using the equivalence of~\eqref{eq:main_problem} and~\eqref{eq:main_problem_system} via~\eqref{eq:matrix_system}.
Proposition~\ref{pro:DistrSolutionsOfLinODEsWithParameter} can be generalized to right hand sides of lower regularity.
Thus, for $f\in {\rm C}(\Omega)$ or $f\in\mathcal D'(\Omega)$
it suffices to replace the condition $\psi\in {\rm C}^\infty(\Omega)$ by the condition $\psi\in {\rm C}^p_x(\Omega)$
or $T_\psi$ by $\psi\in\mathcal D'(\Omega)$,
respectively.
For $f\in{\rm C}^0_x(\mathcal D'_t)(\Omega)$ we should substitute the condition $\psi\in{\rm C}^p_x(\mathcal D'_t)(\Omega)$,
which leads to a result in the spirit of~\cite[Theorem~4.4.8]{Hoermander1983}.
Here ${\rm C}^0_x(\mathcal D'_t)(\Omega)$ is the space of distributions on~$\Omega$ that are ${\rm C}^0$-semiregular in~$x$.
We call an element~$u$ of $\mathcal D'(\Omega)$ \emph{${\rm C}^0$-semiregular in~$x$}
if for any open rectangle $I\times J\subset\Omega$
we have that $u$ restricted to $I\times J$ is in ${\rm C}(J,\mathcal D'(I))$, cf.~\cite{Malgrange&Garding1961,Schwartz1957}.
The space ${\rm C}^p_x(\mathcal D'_t)(\Omega)$ is defined analogously.

\begin{example}\label{ex:1stOrderODEWithNonzeroDistrSolutions}
Although the equation $u_x=H(t,x)u$ from Example~\ref{ex:1stOrderODEWithNoNonzeroSolutions} has no nonzero smooth solutions on~$\Omega$,
it admits nonzero distributional solutions on this set, for example
$u=\delta_{t_0}\otimes T_\eta$,
where $\delta_{t_0}$ is the Dirac delta function at~$t_0\in(0,1)\setminus\{2^{-k}l,\,l=1,\dots,2^k-1,\,k\in\mathbb N\}$,
and $T_\eta$ is the regular distribution associated with the smooth function $\eta\in {\rm C}^\infty\big((0,1)\big)$
defined by $\eta(x):=\exp\big(\int_{1/4}^xH(t_0,x')\,{\rm d}x'\big)$, $x\in(0,1)$.
It is obvious that the Dirac delta function can be replaced by an arbitrary linear combination of its derivatives.
The equation constructed in the proof of Theorem~\ref{thm:NoNonzeroSolutionsForLinODEsWithParameterOnSpecialDomain}
also possesses similar nonzero distributional solutions.
\end{example}

As the previous example suggests, $\mathcal D'(\Omega)$ is not necessarily the best choice for seeking
solutions of equations
of the form $Pu=f$ with $P\in\DO^\infty(\Omega)$ and $f\in {\rm C}^\infty(\Omega)$
since for this space one can in fact solve such equations on each slice $\Omega_t$, $t\in\pr_t\Omega$ separately,
and slice solutions do not affect each other.
It is more natural to look for solutions in the space ${\rm C}^0_t(\mathcal D'_x)(\Omega)$
of distributions on~$\Omega$ that are ${\rm C}^0$-semiregular in~$t$.
Modifying the definition of the semiregularity in~$x$ by permuting~$t$ and~$x$,
we call an element~$u$ of $\mathcal D'(\Omega)$ \emph{${\rm C}^0$-semiregular in~$t$}
if for any open rectangle $I\times J\subset\Omega$
we have that $u$ restricted to $I\times J$ is in ${\rm C}(I,\mathcal D'(J))$.
Analogously, we can also define distributions on~$\Omega$ that are ${\rm C}^q$-semiregular in~$t$ with $q\in\mathbb N$
or ${\rm C}^\infty$-semiregular in~$t$.
\looseness=-1

\begin{proposition}\label{pro:SemiregularDistrSolutionsOfLinODEsWithParameter}
Given an $x$-simple open subset~$\Omega$ of~$\mathbb R^2$,
an arbitrary $P\in\DO^\infty(\Omega)$ and an arbitrary $f\in {\rm C}(\Omega)$ (resp.\ $f\in {\rm C}^\infty(\Omega)$),
any solution of the equation $Pu=f$ in ${\rm C}^0_t(\mathcal D'_x)(\Omega)$ (resp.\ ${\rm C}^\infty_t(\mathcal D'_x)(\Omega)$)
is a regular distribution associated with a classical (resp.\ smooth) solution  of this equation.
\end{proposition}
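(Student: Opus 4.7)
The plan is to leverage Proposition~\ref{pro:DistrSolutionsOfLinODEsWithParameter}: any distributional solution of $Pu=f$ on the $x$-simple domain $\Omega$ can be written as $u=T_\psi+\varphi^s\cdot(\zeta^s\otimes T_{\mathbf 1_{\mathbb R}})\big|_\Omega$, where $\psi$ is a classical (respectively smooth) particular solution, $\{\varphi^s,\,s=1,\dots,p\}$ is a smooth fundamental set of solutions of the homogeneous equation with nonvanishing Wronskian, and each $\zeta^s\in\mathcal D'(\pr_t\Omega)$. Since every ingredient of this representation apart from the $\zeta^s$ is already at least continuous, the problem reduces to showing that the semi\-regularity hypothesis on $u$ forces each $\zeta^s$ to be a continuous (respectively smooth) function on $\pr_t\Omega$.

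To extract this regularity I would argue locally by a duality extraction. Fix $t_0\in\pr_t\Omega$ and an open rectangle $I\times J\subset\Omega$ with $t_0\in I$. The linear independence of $\varphi^1(t_0,\cdot),\dots,\varphi^p(t_0,\cdot)$ on $J$, a consequence of the nonvanishing of the Wronskian, permits a choice of test functions $\phi_1,\dots,\phi_p\in\mathcal D(J)$ for which the matrix
\[
M_{ss'}(t):=\int_J\varphi^s(t,x)\,\phi_{s'}(x)\,{\rm d}x
\]
satisfies $\det M(t_0)\ne 0$. Since $M$ is smooth in $t$ (parameter integrals of smooth integrands with compactly supported test functions), there is a subinterval $I_0\ni t_0$ of $I$ on which $M$ remains invertible, with $M^{-1}$ smooth on $I_0$.

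Plugging test functions of the product form $\chi(t)\phi_{s'}(x)$ with $\chi\in\mathcal D(I_0)$ into the representation of $u$ yields, after the standard distributional computation,
\[
\sum_s M_{ss'}(t)\,\zeta^s=F_{u,s'}-F_{\psi,s'}\quad\text{in }\mathcal D'(I_0),
\]
where $F_{\psi,s'}(t):=\int_J\psi(t,x)\phi_{s'}(x)\,{\rm d}x$ is continuous (respectively smooth) because $\psi\in{\rm C}^p_x(\Omega)$ (respectively ${\rm C}^\infty(\Omega)$), and $F_{u,s'}(t):=\langle u(t,\cdot)|_J,\phi_{s'}\rangle$ is continuous (respectively smooth) by the definition of ${\rm C}^0_t(\mathcal D'_x)(\Omega)$ (respectively ${\rm C}^\infty_t(\mathcal D'_x)(\Omega)$). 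Multiplying this identity through by the smooth matrix $M^{-1}(t)$ expresses each $\zeta^s\big|_{I_0}$ as an explicit continuous (respectively smooth) combination of continuous (respectively smooth) functions. Since $t_0\in\pr_t\Omega$ was arbitrary, this gives $\zeta^s\in{\rm C}(\pr_t\Omega)$ (respectively ${\rm C}^\infty(\pr_t\Omega)$), so that $u=T_v$ for the classical (respectively smooth) solution $v:=\psi+\zeta^s\varphi^s$ of $Pu=f$.

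The main delicate point I anticipate is the careful identification of $\langle u,\chi\otimes\phi_{s'}\rangle$ with $\int_{I_0}\chi(t)F_{u,s'}(t)\,{\rm d}t$, which is precisely the content of the ${\rm C}^0$-semi\-regularity definition and is the essential bridge between the distributional and functional viewpoints. Otherwise the argument is a straightforward linear-algebraic inversion. Of course the entire plan is logically contingent on Proposition~\ref{pro:DistrSolutionsOfLinODEsWithParameter}, whose proof is deferred to the systems setting via~\eqref{eq:matrix_system}; the structural representation of distributional solutions is the genuinely nontrivial input here.
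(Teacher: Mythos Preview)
Your proposal is correct and close in spirit to the paper's argument, though it takes a slightly different route. The paper defers entirely to the system setting (Proposition~\ref{pro:SemiregularDistrSolutionsOfLinSystemsOfODEsWithParameter}): there one multiplies $v-T_\psi$ by the smooth invertible fundamental matrix $\Phi^{-1}$ to obtain $\tilde v=(\zeta\otimes T_{\mathbf 1_{\mathbb R}})\big|_\Omega$; ${\rm C}^0$-semiregularity in~$t$ is preserved under multiplication by smooth functions, and for a distribution already in this tensor-product form the semiregularity forces $\zeta$ to be continuous (a single test function $\phi$ with $\int\phi=1$ suffices to read off $\zeta$). You instead stay at the scalar level and postpone the inversion: you pair first against a system $\{\phi_{s'}\}$ of test functions, obtain the matrix relation $M(t)\zeta=F_u-F_\psi$ with continuous right-hand side, and only then invert the smooth matrix~$M$ of pairings. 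Both routes implement the same idea; the paper inverts in the $x$-variable \emph{before} testing, whereas you invert \emph{after}, at the cost of manufacturing a dual system of test functions but with the benefit of not having to pass to the first-order system. Your write-up is in fact more explicit than the paper's, which leaves the extraction step (``hence the corresponding tuple~$\zeta$ belongs to ${\rm C}(\pr_t\Omega,\mathbb R^p)$'') unelaborated.
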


Again, it is better to carry out the proof for the linear system of first-order ordinary differential equations
that is equivalent to the equation $Pu=f$ via \eqref{eq:matrix_system},
see Proposition~\ref{pro:SemiregularDistrSolutionsOfLinSystemsOfODEsWithParameter} below.

\section{Parameter-dependent linear systems\\ of ordinary differential equations}
\label{sec:LinSystemsOfODEsWithParameter}

The results of the previous sections on single parameter-dependent linear ordinary differential equations
can easily be extended to parameter-dependent linear systems of ordinary differential equations in the canonical form.
Any such system is equivalent to
a linear system of first-order ordinary differential equations in the normal Cauchy form (cf.\ Section \ref{sec:intro}).
It is then evident that the results of Sections \ref{sec:FundamentalSolutionSetsOfLinODEsWithParameter},
\ref{sec:ExistenceOfSolutionsOfInhomLinODEsWithParameter} and \ref{sec:DistrSolutionsOfLinODEsWithParameter}
have direct analogues for the system case. Except for the case of distributional solutions we shall
therefore omit the proofs and confine ourselves to stating the results below.

Let $\mathrm M_p(\mathbb R)$ denote the set of $p\times p$ matrices with real coefficients.
Consider a system of linear ordinary differential equations~$\mathcal P$: $v_x=A(t,x)v$
on an open subset~$\Omega$ of the $(t,x)$-plane,
where $A\in {\rm C}(\Omega,\mathrm M_p(\mathbb R))$,
$v=(v^1,\dots,v^p)^{\mathsf T}$ is the unknown vector function of~$(t,x)$,
$x$ is the independent variable and  $t$ plays the role of a parameter.
This system can be interpreted as a vector differential equation.
Its matrix counterpart, $M_x=A(t,x)M$ with $M\in\mathrm M_p(\mathbb R)$,
is denoted by $\mathcal P_{\rm m}$.
We assume that (classical) solutions of the system~$\mathcal P$ and the matrix equation $\mathcal P_{\rm m}$
belong to ${\rm C}^1_x(\Omega,\mathbb R^p)$ and ${\rm C}^1_x(\Omega,\mathrm M_p(\mathbb R))$, respectively.

\begin{definition}\label{def:FundamentalMatrixOfLinSystemsOfODEsWithParameter}
We say that a matrix-valued function $\Phi\in {\rm C}^1_x(\Omega,\mathrm M_p(\mathbb R))$
satisfying the equation~$\mathcal P_{\rm m}$, $\Phi_x=A(t,x)\Phi$, is
\begin{itemize}\itemsep=0ex
\item
a \emph{fundamental matrix} of~$\mathcal P$ on~$\Omega$
if any solution~$v$ of~$\mathcal P$ can uniquely be represented in the form $v=\Phi\zeta$
for some function $\zeta\in {\rm C}(\pr_t\Omega,\mathbb R^p)$;
\item
a \emph{locally fundamental matrix} of~$\mathcal P$ on~$\Omega$
if each point of~$\Omega$ has a neighbourhood $U\subseteq\Omega$
such that the restriction of any solution~$v$ of~$\mathcal P$ to~$U$, $v\big|_U$,
can uniquely be represented in the form $v\big|_U=\Phi\big|_U\zeta$
for some function $\zeta\in {\rm C}(\pr_tU,\mathbb R^p)$.
\end{itemize}
\end{definition}

\begin{lemma}\label{lem:OnMatrixSolutionsWithNonzeroDet}
Any solution $\Phi$ of the matrix equation $P_{\rm m}$ with determinant nonvanishing on~$\Omega$
is a locally fundamental matrix of the system~$\mathcal P$.
\end{lemma}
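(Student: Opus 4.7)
The plan is to mimic the scalar argument in Lemma~\ref{lem:OnWronskianOfFundamentalSolutionSetOfODEWithParameter}, using a cover of~$\Omega$ by open balls and extracting the coefficient vector from the values of a given solution along a fixed horizontal slice in each ball.

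Concretely, I would first choose an open cover of~$\Omega$ consisting of balls $U_j:=B_{\varepsilon_j}(z_j)\subseteq\Omega$, where $z_j=(t_j,x_j)\in\Omega$ and $j$ runs through some index set~$J$. Let $v$ be an arbitrary solution of~$\mathcal P$ in ${\rm C}^1_x(\Omega,\mathbb R^p)$, and fix~$j\in J$. For each $t\in I_j:=(t_j-\varepsilon_j,t_j+\varepsilon_j)$ the matrix $\Phi(t,x_j)$ is invertible by the nonvanishing-determinant hypothesis, so setting
\[
\zeta^j(t):=\Phi(t,x_j)^{-1}v(t,x_j),\quad t\in I_j,
\]
defines an element of ${\rm C}(I_j,\mathbb R^p)=\mathrm C(\pr_tU_j,\mathbb R^p)$, since $t\mapsto\Phi(t,x_j)$ and $t\mapsto v(t,x_j)$ are continuous and matrix inversion is continuous on $\mathrm{GL}_p(\mathbb R)$.

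Next I would check that $v\big|_{U_j}=\Phi\big|_{U_j}\zeta^j$. Both sides belong to ${\rm C}^1_x(U_j,\mathbb R^p)$ and, viewed as functions of~$x$ with~$t\in I_j$ frozen, both satisfy the first-order linear system $w_x=A(t,x)w$ on the interval $U_{j,t}$ containing~$x_j$. By construction they coincide at $x=x_j$, so the uniqueness part of the classical Picard--Lindel\"of theorem (applied slicewise in~$t$) forces them to agree throughout~$U_j$. Uniqueness of the representation is immediate: if $\Phi\big|_{U_j}\eta=\Phi\big|_{U_j}\zeta^j$ for some $\eta\in {\rm C}(I_j,\mathbb R^p)$, then evaluating at $x=x_j$ and left-multiplying by $\Phi(t,x_j)^{-1}$ gives $\eta=\zeta^j$ on~$I_j$.

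There is no real obstacle here; the only small subtlety is to make sure one is working with genuine classical solutions in ${\rm C}^1_x$ so that slicewise ODE uniqueness applies, and to choose the cover by balls precisely so that each slice $U_{j,t}$ is an interval containing the anchor point $x_j$. This guarantees that the local agreement of $v$ and $\Phi\zeta^j$ propagates across the whole section $U_{j,t}$ for every $t\in\pr_tU_j$, which is exactly what Definition~\ref{def:FundamentalMatrixOfLinSystemsOfODEsWithParameter} requires of a locally fundamental matrix.
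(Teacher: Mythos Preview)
Your proposal is correct and follows precisely the approach the paper takes for the scalar analogue (Lemma~\ref{lem:OnWronskianOfFundamentalSolutionSetOfODEWithParameter}); the paper in fact omits the proof of Lemma~\ref{lem:OnMatrixSolutionsWithNonzeroDet}, stating that the system-case results are direct analogues of the scalar ones, and your argument is exactly that matrix translation.
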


\begin{theorem}\label{thm:OnFundamentalMatrixOfLinSystemsOfODEsWithParameter}
Given an open subset~$\Omega$ of the $(t,x)$-plane, the following are equivalent:
\begin{itemize}\itemsep=0ex
\item[{\rm({\it i})}] Any homogeneous linear system of first-order ordinary differential equations $v_x=A(t,x)v$ with $A\in {\rm C}(\Omega,\mathrm M_p(\mathbb R))$,
where $t$ plays the role of a parameter,
admits a fundamental matrix on~$\Omega$ with determinant nonvanishing on the entire~$\Omega$.
\item[{\rm({\it ii})}] $\Omega$ is an $x$-simple region.
\end{itemize}
\end{theorem}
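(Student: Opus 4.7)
The plan is to mimic the argument used for Theorem~\ref{thm:OnFundSolutionSetOfLinODEsWithParameter}, replacing the scalar Liouville--Ostrogradski formula by its matrix counterpart $(\det\Phi)_x=(\mathrm{tr}\,A)\det\Phi$.

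For the implication {\rm({\it ii})}$\Rightarrow${\rm({\it i})}, suppose $\Omega$ is $x$-simple and let $A\in {\rm C}(\Omega,\mathrm M_p(\mathbb R))$. By Lemma~\ref{lem:OnOpenX-SimpleRegion} there exists $\theta\in {\rm C}^\infty(I)$, $I=\pr_t\Omega$, whose graph lies in~$\Omega$. For each $t\in I$, solve the matrix initial value problem $\Phi_x=A(t,x)\Phi$, $\Phi(t,\theta(t))=E_p$, along the interval $\Omega_t$. Standard continuous dependence of ODE solutions on parameters and initial data yields $\Phi\in {\rm C}^1_x(\Omega,\mathrm M_p(\mathbb R))$, and the matrix Liouville formula guarantees that $\det\Phi$ is nonzero on all of $\Omega$. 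That $\Phi$ is indeed a fundamental matrix (not merely locally fundamental) follows as in Lemma~\ref{lem:OnMatrixSolutionsWithNonzeroDet}, since any solution $v$ is determined by $\zeta(t):=\Phi(t,\theta(t))^{-1}v(t,\theta(t))=v(t,\theta(t))$.

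For {\rm({\it i})}$\Rightarrow${\rm({\it ii})} I would argue by contrapositive and split into two cases, exactly as in the proof of Theorem~\ref{thm:OnFundSolutionSetOfLinODEsWithParameter}. First, if every connected component of $\Omega$ is $x$-simple but $\Omega$ is not, pick two components $U_1,U_2$ with $\pr_tU_1\cap\pr_tU_2\ne\varnothing$, take $A\equiv 0$ (or any $A$), and build a matrix solution~$\Psi$ whose restriction to $U_1$ equals a fundamental matrix there and whose restriction to $U_2$ vanishes identically (and is arbitrary on the remaining components). Existence of a global fundamental matrix $\Phi$ would force $\Psi=\Phi\zeta$ with $\zeta\in {\rm C}(\pr_t\Omega,\mathrm M_p(\mathbb R))$; vanishing on $U_2$ together with $\det\Phi\ne0$ gives $\zeta\equiv 0$ on $\pr_tU_2$, contradicting invertibility of $\Psi$ on the part of $U_1$ lying above $\pr_tU_1\cap\pr_tU_2$.

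Second, if some connected component of~$\Omega$ is not $x$-simple, apply Lemma~\ref{lem:OnOpenConnectedNonX-SimpleSets} to obtain constants $\tilde t_0,\varepsilon,\tilde x_1,\tilde x_2$ and the closed set $\Upsilon\subset\{\tilde t_0\}\times[\tilde x_1,\tilde x_2]$, together with the rectangle~\eqref{eq:SpecialRectangleDomain} contained in $\Omega\setminus\Upsilon$. Choose
\[
A(t,x)=-\frac{c}{(x-\tilde x_1)^2+(t-\tilde t_0)^2}\,E_p,\quad c>0,
\]
(extended arbitrarily off the rectangle), so that
\[
G(t):=\int_{\tilde x_1-\varepsilon}^{\tilde x_2+\varepsilon}\mathrm{tr}\,A(t,x)\,{\rm d}x\to-\infty\quad\text{as}\quad t\to\tilde t_0{}^-.
\]
For any solution $\Phi$ of $\Phi_x=A\Phi$, the matrix Liouville formula gives
\[
\det\Phi(t,\tilde x_2+\varepsilon)=\det\Phi(t,\tilde x_1-\varepsilon)\,e^{G(t)}.
\]
Hence $\det\Phi(t,\tilde x_2+\varepsilon)\to0$ as $t\to\tilde t_0{}^-$, forcing $\det\Phi(\tilde t_0,\tilde x_2+\varepsilon)=0$ by continuity, so $\Phi$ cannot be a fundamental matrix on~$\Omega$.

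The main obstacle is the first case of the contrapositive: one must construct the witness solution $\Psi$ and verify it lies in ${\rm C}^1_x(\Omega,\mathrm M_p(\mathbb R))$ while jumping between disconnected components. This is achieved by invoking the already-proved direction {\rm({\it ii})}$\Rightarrow${\rm({\it i})} on each $x$-simple component, together with the fact that the zero matrix is a valid solution, yielding a globally ${\rm C}^1_x$ solution since the components are mutually open.
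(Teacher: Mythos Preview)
Your proof is correct and follows the paper's intended route: the paper omits the proof of this theorem, declaring it a direct analogue of Theorem~\ref{thm:OnFundSolutionSetOfLinODEsWithParameter}, and your argument is precisely the expected matrix translation of that scalar proof. The only cosmetic difference is that in the non-$x$-simple-component case you conclude $\det\Phi(\tilde t_0,\tilde x_2+\varepsilon)=0$ directly from boundedness of $\det\Phi(t,\tilde x_1-\varepsilon)$ together with $e^{G(t)}\to 0$, whereas the scalar proof argues by contradiction that the Wronskian must vanish at $(\tilde t_0,\tilde x_1-\varepsilon)$ lest it blow up at $(\tilde t_0,\tilde x_2+\varepsilon)$; both reach the same conclusion.
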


\begin{corollary}
If a connected component of an open set of~$\Omega$ is not an $x$-simple region,
then for each~$p\in\mathbb N$ there exists an infinite-parameter family of $p\times p$ matrix equations
of the form $M_x=A(t,x)M$ with $A\in {\rm C}^\omega(\Omega,\mathrm M_p(\mathbb R))$
such that the determinant of any solution of each of them vanishes
on the same line segment $\{t_0\}\times[x_1,x_2]$ contained in~$\Omega$.
\end{corollary}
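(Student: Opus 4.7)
The plan is to mimic, line for line, the proof of the scalar analogue (the corollary following Theorem~\ref{thm:OnFundSolutionSetOfLinODEsWithParameter}), replacing the Liouville--Ostrogradski formula for the Wronskian by its matrix counterpart for linear first-order systems. Concretely, for a solution $\Phi\in {\rm C}^1_x(\Omega,\mathrm M_p(\mathbb R))$ of the matrix equation $\mathcal P_{\rm m}$: $M_x=A(t,x)M$, the Abel--Jacobi identity reads
\[
\det\Phi(t,x)=\det\Phi(t,x_0)\exp\left(\int_{x_0}^x\mathrm{tr}\,A(t,x')\,{\rm d}x'\right),
\]
so the role played by $-g^{p-1}/g^p$ in the scalar case is now played by $\mathrm{tr}\,A$.

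First I would apply Lemma~\ref{lem:OnOpenConnectedNonX-SimpleSets} to the non-$x$-simple connected component of $\Omega$, extracting $\tilde t_0,\varepsilon,\tilde x_1,\tilde x_2\in\mathbb R$ with $\varepsilon>0$ and $\tilde x_1\leqslant\tilde x_2$, together with the closed set $\Upsilon\subset\{\tilde t_0\}\times[\tilde x_1,\tilde x_2]$ such that the rectangular configuration \eqref{eq:SpecialRectangleDomain} lies in $\Omega$ and is missed along $\Upsilon$.

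Next I would construct the infinite-parameter family of matrices $A\in {\rm C}^\omega(\Omega,\mathrm M_p(\mathbb R))$. Write $A=A^{(0)}+B$, where
\[
A^{(0)}(t,x):=\mathrm{diag}\!\left(\frac{f(t,x)}{(x-\tilde x_1)^2+(t-\tilde t_0)^2},\,0,\dots,0\right),
\]
$f\in {\rm C}^\omega(\Omega)$ is positive on $\Omega$ and bounded away from zero on the intersection of $\Omega$ with a neighborhood of $(\tilde t_0,\tilde x_1)$, and $B\in {\rm C}^\omega(\Omega,\mathrm M_p(\mathbb R))$ is an arbitrary trace-free matrix (up to a bounded term), so that $\mathrm{tr}\,A=f\big((x-\tilde x_1)^2+(t-\tilde t_0)^2\big)^{-1}$ plus a bounded piece; the Taylor coefficients of $f$ and the entries of $B$ serve as the infinitely many parameters. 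Possibly one composes with a reflection in~$x$ swapping $\tilde x_1-\varepsilon$ and $\tilde x_2+\varepsilon$, mirroring the footnote in the proof of Theorem~\ref{thm:OnFundSolutionSetOfLinODEsWithParameter}. With this choice,
\[
G(t):=\int_{\tilde x_1-\varepsilon}^{\tilde x_2+\varepsilon}\mathrm{tr}\,A(t,x)\,{\rm d}x\to +\infty\quad\text{as}\quad t\to\tilde t_0{}^-,
\]
because the integrand grows like $f(\tilde t_0,\tilde x_1)(x-\tilde x_1)^{-2}\cdots$ in a half-neighborhood.

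Finally, I would copy the ``key step'' of Theorem~\ref{thm:OnFundSolutionSetOfLinODEsWithParameter}: for any solution $\Phi$ of $\mathcal P_{\rm m}$ it suffices to show $\det\Phi(\tilde t_0,\tilde x_1-\varepsilon)=0$. Assuming the contrary, the matrix Liouville formula applied along the line $t=$ const and then letting $t\to\tilde t_0{}^-$ forces
\[
\det\Phi(t,\tilde x_2+\varepsilon)=\det\Phi(t,\tilde x_1-\varepsilon)\,e^{G(t)}\to\infty,
\]
which contradicts continuity of $\det\Phi$ at $(\tilde t_0,\tilde x_2+\varepsilon)$. By continuity of $\det\Phi$ in $x$ at $t=\tilde t_0$, one then concludes that $\det\Phi$ vanishes identically on the segment $\{\tilde t_0\}\times[\tilde x_1-\varepsilon,\tilde x_1)$, which is the segment $\{t_0\}\times[x_1,x_2]$ of the statement (with $t_0:=\tilde t_0$, $x_1:=\tilde x_1-\varepsilon$, $x_2<\tilde x_1$ arbitrary). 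No real obstacle arises; the only mildly delicate point is the honest verification that the chosen $A^{(0)}+B$ exhausts an infinite-dimensional parameter space while keeping $\mathrm{tr}\,A$ of the singular form required for $G(t)\to+\infty$, which is ensured by the freedom to vary either $f$ or the off-diagonal and trace-free parts of $B$.
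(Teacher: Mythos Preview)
Your proposal is correct and is precisely the direct matrix analogue of the scalar corollary's proof that the paper has in mind (the paper itself omits the proof, stating only that the results of Section~\ref{sec:FundamentalSolutionSetsOfLinODEsWithParameter} carry over verbatim). The one cosmetic point is that the extension from $\det\Phi(\tilde t_0,\tilde x_1-\varepsilon)=0$ to vanishing on the whole segment $\{\tilde t_0\}\times[\tilde x_1-\varepsilon,\tilde x_1)$ comes from the Abel--Jacobi identity itself (zero times an exponential is zero), not from continuity in~$x$; and your parenthetical ``(up to a bounded term)'' for~$B$ is slightly imprecise---what you need is simply that $\mathrm{tr}\,B$ be real analytic and bounded on the rectangle, which any $B\in{\rm C}^\omega(\Omega,\mathrm M_p(\mathbb R))$ automatically satisfies there.
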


\begin{corollary}
If each connected component of an open non-$x$-simple set~$\Omega$ is $x$-simple,
then any $p$-vector equation of the form $v_x=A(t,x)v$ with $A\in {\rm C}(\Omega,\mathrm M_p(\mathbb R))$
admits no fundamental matrix on~$\Omega$
although the associated matrix equation has solutions with determinants nonvanishing on~$\Omega$.
\end{corollary}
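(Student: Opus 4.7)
The plan is to mirror, in the vector setting, the proof of Theorem~\ref{thm:OnFundSolutionSetOfLinODEsWithParameter} in the first non-$x$-simple case and to combine component-wise solutions in the $x$-simple case. The first observation is that, under the hypothesis, there must exist two distinct connected components $U_1$ and $U_2$ of~$\Omega$ with $\pr_tU_1\cap\pr_tU_2\ne\varnothing$: if all components had pairwise disjoint $t$-projections, then for every $t\in\pr_t\Omega$ the slice $\Omega_t$ would coincide with the slice of the unique component whose projection contains~$t$, hence be an open interval, making $\Omega$ itself $x$-simple.

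For the positive assertion, fix $A\in{\rm C}(\Omega,\mathrm M_p(\mathbb R))$ and consider each connected component~$U_\alpha$ separately. Each $U_\alpha$ is open and $x$-simple, so Theorem~\ref{thm:OnFundamentalMatrixOfLinSystemsOfODEsWithParameter} supplies a fundamental matrix $\Phi_\alpha\in {\rm C}^1_x(U_\alpha,\mathrm M_p(\mathbb R))$ of $\mathcal P_{\rm m}$ on~$U_\alpha$ with $\det\Phi_\alpha\ne0$ on~$U_\alpha$. Since the connected components are pairwise disjoint open subsets of~$\Omega$, the function $\Phi$ defined on~$\Omega$ by $\Phi\big|_{U_\alpha}:=\Phi_\alpha$ is well defined, belongs to ${\rm C}^1_x(\Omega,\mathrm M_p(\mathbb R))$, satisfies $\Phi_x=A\Phi$ on~$\Omega$, and has $\det\Phi\ne0$ on the entire~$\Omega$.

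For the negative assertion, suppose for contradiction that $\mathcal P$: $v_x=A(t,x)v$ admits a fundamental matrix $\Phi$ on~$\Omega$. The restriction $\Phi|_{U_2}$ is itself a fundamental matrix of~$\mathcal P$ on~$U_2$, so the vector-valued analogue of Corollary~\ref{cor:WronkianOfFSS} implies $\det\Phi\ne0$ on~$U_2$. Using Lemma~\ref{lem:OnOpenX-SimpleRegion}, choose a smooth section $\theta\colon\pr_tU_1\to\mathbb R$ whose graph lies in~$U_1$, fix $t_0\in\pr_tU_1\cap\pr_tU_2$, and by solving the initial value problem $v(t,\theta(t))=(1,0,\dots,0)^{\mathsf T}$ (in the first coordinate of~$x=\theta(t)$) on~$U_1$ we obtain a solution~$\psi_1\in {\rm C}^1_x(U_1,\mathbb R^p)$ of~$\mathcal P$ on~$U_1$ with $\psi_1(t_0,\theta(t_0))\ne0$. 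Setting $\psi:=\psi_1$ on~$U_1$ and $\psi:=0$ on all other components of~$\Omega$ yields a solution of~$\mathcal P$ on~$\Omega$. By assumption, $\psi=\Phi\zeta$ for a unique $\zeta\in {\rm C}(\pr_t\Omega,\mathbb R^p)$; but $\psi=0$ on~$U_2$ and $\det\Phi\ne0$ on~$U_2$ force $\zeta=0$ on~$\pr_tU_2$, while $\psi(t_0,\theta(t_0))\ne0$ forces $\zeta(t_0)\ne0$, a contradiction since $t_0\in\pr_tU_2$.

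The main point requiring care is the construction of the witness solution~$\psi$: one has to check that solving the IVP on the $x$-simple component~$U_1$ produces a continuous extension to all of~$\Omega$ by zero, which relies on the fact that the complement of~$U_1$ in~$\Omega$ is open (as a union of other components) and therefore imposes no matching condition at the boundary. The rest is bookkeeping already made available by Theorem~\ref{thm:OnFundamentalMatrixOfLinSystemsOfODEsWithParameter} and Lemma~\ref{lem:OnOpenX-SimpleRegion}.
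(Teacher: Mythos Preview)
Your argument is correct and follows the paper's (implicit) route, namely the vector analogue of the first non-$x$-simple case in the proof of Theorem~\ref{thm:OnFundSolutionSetOfLinODEsWithParameter}. One small point worth a line of justification: the claim that $\Phi|_{U_2}$ is itself a fundamental matrix on~$U_2$ requires the uniqueness half of Definition~\ref{def:FundamentalMatrixOfLinSystemsOfODEsWithParameter}, which is not automatic from restriction; the paper's scalar proof sidesteps this by expressing the globally constructed nonvanishing-determinant solution (your~$\Phi$ from the positive assertion, call it~$\Psi$) as $\Psi=\Phi Z$ with $Z\in{\rm C}(\pr_t\Omega,\mathrm M_p(\mathbb R))$ via the global fundamental-matrix property, which forces $\det\Phi\ne0$ on all of~$\Omega$ at once and in particular supplies the pointwise invertibility you need on~$U_2$.
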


\begin{corollary}\label{cor:DetOfFundMatrix}
Given an open $x$-simple subset~$\Omega$ of the $(t,x)$-plane,
a solution~$\Phi$ of a $p\times p$ matrix equations
of the form $M_x=A(t,x)M$ with $A\in {\rm C}(\Omega,\mathrm M_p(\mathbb R))$
is a fundamental matrix on~$\Omega$ for the associated vector equation $v_x=A(t,x)v$
if and only if the determinant of~$\Phi$ does not vanish on~$\Omega$.
\end{corollary}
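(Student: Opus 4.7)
The statement is the vector--matrix analogue of Corollary~\ref{cor:WronkianOfFSS}, and the plan is to prove the two implications separately, leveraging Theorem~\ref{thm:OnFundamentalMatrixOfLinSystemsOfODEsWithParameter} and Lemma~\ref{lem:OnMatrixSolutionsWithNonzeroDet} together with the geometric input provided by $x$-simplicity via Lemma~\ref{lem:OnOpenX-SimpleRegion}.

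For the direction ``$\Phi$ fundamental $\Rightarrow\det\Phi\ne0$'' I would first invoke Theorem~\ref{thm:OnFundamentalMatrixOfLinSystemsOfODEsWithParameter} to produce an auxiliary fundamental matrix $\Psi$ of the same vector equation with $\det\Psi\ne0$ on~$\Omega$. Applying the defining property of~$\Phi$ to each column of~$\Psi$ yields a continuous matrix-valued function $Z\colon\pr_t\Omega\to\mathrm M_p(\mathbb R)$ with $\Psi=\Phi Z$; symmetrically, the fundamental character of~$\Psi$ produces a continuous $W\colon\pr_t\Omega\to\mathrm M_p(\mathbb R)$ with $\Phi=\Psi W$. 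Combining these two identities gives $\Psi=\Psi(WZ)$, and the uniqueness clause of the definition of ``fundamental matrix'' forces $WZ=I$ on $\pr_t\Omega$, so $\det W\ne0$ there. Then $\det\Phi=\det\Psi\cdot\det W\ne0$ on~$\Omega$.

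For the converse, assume $\det\Phi\ne0$ on~$\Omega$, let $v$ be any solution of $v_x=A(t,x)v$, and define $\zeta(t,x):=\Phi(t,x)^{-1}v(t,x)$. A short calculation using $\Phi_x=A\Phi$ and $v_x=Av$ gives
\[
\p_x\zeta=-\Phi^{-1}\Phi_x\Phi^{-1}v+\Phi^{-1}v_x=-\Phi^{-1}Av+\Phi^{-1}Av=0,
\]
so $\zeta$ is constant along each slice $\{t\}\times\Omega_t$. This is where $x$-simplicity enters decisively: by Definition~\ref{def:XSimpleSet} every slice $\Omega_t$ is a single open interval, hence connected, so the pointwise data collapses to a well-defined function $\zeta=\zeta(t)$ on $\pr_t\Omega$. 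To promote it to an element of ${\rm C}(\pr_t\Omega,\mathbb R^p)$ I would fix a smooth section $\theta\in{\rm C}^\infty(\pr_t\Omega)$ with graph contained in~$\Omega$ (which exists by Lemma~\ref{lem:OnOpenX-SimpleRegion}) and write $\zeta(t)=\Phi(t,\theta(t))^{-1}v(t,\theta(t))$; continuity of $\zeta$ then follows from continuity of~$\Phi$, $\theta$, $v$ together with continuity of matrix inversion on ${\rm GL}_p(\mathbb R)$. Uniqueness of the representation $v=\Phi\zeta$ is immediate: if $\Phi\zeta_1=\Phi\zeta_2$, then evaluating at $(t,\theta(t))$ and multiplying by $\Phi(t,\theta(t))^{-1}$ yields $\zeta_1=\zeta_2$ throughout $\pr_t\Omega$.

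I do not expect any serious obstacle in this argument. The only delicate conceptual point is recognising where $x$-simplicity is genuinely needed, namely in passing from fibrewise constancy of $\zeta$ to a globally well-defined function on $\pr_t\Omega$: without connectedness of the slices, different connected components of $\Omega_t$ could produce incompatible candidate values of $\zeta(t)$. The subsequent continuity step relies on Lemma~\ref{lem:OnOpenX-SimpleRegion} to supply a continuous (in fact smooth) selection $\theta$ of interior points in~$\Omega$, which provides a uniform evaluation locus along which $\Phi^{-1}v$ depends continuously on~$t$.
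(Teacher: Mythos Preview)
Your proof is correct. The paper itself omits the proof of this corollary, stating at the outset of Section~\ref{sec:LinSystemsOfODEsWithParameter} that the results there are direct analogues of those in Sections~\ref{sec:FundamentalSolutionSetsOfLinODEsWithParameter}--\ref{sec:DistrSolutionsOfLinODEsWithParameter} and that proofs are therefore omitted; your argument is precisely the kind of direct translation of Corollary~\ref{cor:WronkianOfFSS} the authors have in mind, drawing on Theorem~\ref{thm:OnFundamentalMatrixOfLinSystemsOfODEsWithParameter} and Lemma~\ref{lem:OnOpenX-SimpleRegion} in the expected way.
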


\begin{corollary}\label{cor:SystemsOnOpenSetsWithXSimplePieces}
1.\ If an open set~$\Omega$ has an $x$-simple piece,
then any system of differential equations $v_x=A(t,x)v$ with $A\in {\rm C}(\Omega,\mathrm M_p(\mathbb R))$,
possesses a solution that is not identically zero on~$\Omega$.

2.\ If there are $x$-simple pieces of~$\Omega$ with overlapping projections to the $t$-axis,
then any system of the above form admits no fundamental matrix on~$\Omega$.
\end{corollary}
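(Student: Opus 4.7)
The plan is to mirror the proof of Corollary~\ref{cor:OnOpenSetsWithXSimplePieces} for the scalar case, replacing fundamental sets of solutions and their Wronskians by fundamental matrices and their determinants. The essential ingredients are Lemma~\ref{lem:OnOpenX-SimpleRegion} (to produce a smooth section of an $x$-simple open set), Theorem~\ref{thm:OnFundamentalMatrixOfLinSystemsOfODEsWithParameter} (to solve the associated matrix equation on $x$-simple pieces), and Corollary~\ref{cor:DetOfFundMatrix} (the determinantal characterization of fundamental matrices on $x$-simple sets).

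For part~1, I would fix an $x$-simple piece $U$ of $\Omega$, that is, a connected component of $\Omega\cap(I\times\mathbb R)$ for some open interval~$I$. Lemma~\ref{lem:OnOpenX-SimpleRegion} supplies $\theta\in{\rm C}^\infty(\pr_tU)$ whose graph lies in~$U$, and I pick a nonzero $w\in\mathbb R^p$ together with a smooth non-negative bump function $\chi$ on $\pr_tU$, compactly supported and not identically zero. For each $t\in\pr_tU$, solving the initial value problem $v_x=A(t,x)v$ with initial condition $v(t,\theta(t))=\chi(t)w$ on the interval~$U_t$ produces a function $\psi\in{\rm C}^1_x(U,\mathbb R^p)$ depending continuously on~$t$. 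Extending by zero outside~$U$ yields an element of ${\rm C}^1_x(\Omega,\mathbb R^p)$ that satisfies the system on~$\Omega$ and does not vanish identically, since $\psi(t_*,\theta(t_*))=w\ne0$ for every $t_*\in\supp\chi$.

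For part~2, suppose $U$ and $\tilde U$ are two $x$-simple pieces with $\pr_tU\cap\pr_t\tilde U\ne\varnothing$, and fix $t_0$ in this intersection. Applying the construction above to $U$ with the additional requirement $\chi(t_0)\ne0$ yields $\psi\in{\rm C}^1_x(\Omega,\mathbb R^p)$ with $\supp\psi\subset U$ and $\psi(t_0,\theta(t_0))\ne0$. If $\Phi$ were a fundamental matrix of $\mathcal P$ on the whole of~$\Omega$, there would exist $\zeta\in{\rm C}(\pr_t\Omega,\mathbb R^p)$ with $\psi=\Phi\zeta$. The restriction $\Phi|_{\tilde U}$ would again be a fundamental matrix on the $x$-simple set~$\tilde U$ (by the argument used in the scalar proof), so Corollary~\ref{cor:DetOfFundMatrix} forces $\det\Phi\ne0$ on~$\tilde U$. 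The vanishing $\psi\equiv0$ on~$\tilde U$ then implies $\zeta(t)=0$ for every $t\in\pr_t\tilde U$, and in particular $\zeta(t_0)=0$, which contradicts $\psi(t_0,\theta(t_0))=\Phi(t_0,\theta(t_0))\zeta(t_0)\ne0$.

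The main technical subtlety is the step establishing ${\rm C}^1_x$-regularity of the zero extension of~$\psi$ from $U$ to~$\Omega$. The compact support of~$\chi$ inside $\pr_tU$ ensures that $\psi$ vanishes in a strip near the ``vertical'' portions of~$\partial U$, while any point of $\Omega\setminus\bar U$ either lies outside $I\times\mathbb R$ (where $\psi$ is by construction zero) or in another connected component of $\Omega\cap(I\times\mathbb R)$, which is open in~$\Omega$ and disjoint from~$U$; this supplies the required neighborhood structure for the continuity and differentiability of the extension. A secondary subtle point is the inheritance of the fundamental-matrix property under restriction to~$\tilde U$; this parallels the scalar assertion and can be justified by comparing $\Phi|_{\tilde U}$ with a fundamental matrix on~$\tilde U$ produced via Theorem~\ref{thm:OnFundamentalMatrixOfLinSystemsOfODEsWithParameter} through a continuous change-of-basis matrix in~$t$.
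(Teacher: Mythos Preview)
Your proposal is correct and follows precisely the approach the paper intends: the paper omits the proof of this corollary, stating that it is a direct analogue of Corollary~\ref{cor:OnOpenSetsWithXSimplePieces} for the scalar case, and your argument is exactly that translation (with Corollary~\ref{cor:DetOfFundMatrix} playing the role of Corollary~\ref{cor:WronkianOfFSS}). One trivial slip: the initial value is $\psi(t_*,\theta(t_*))=\chi(t_*)w$, not~$w$, but this does not affect the conclusion since $\chi$ is not identically zero.
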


\begin{proposition}\label{pro:NoNonzeroSolutionsForLinSystemsOfODEsWithParameterOSpecialDomain}\looseness=-1
If an open set~$\Omega$ contains no $x$-simple pieces,
and the subset~$J$ of~$t$'s from $\pr_t\Omega$ with connected $\Omega_t$'s is dense in~$\pr_t\Omega$,
then for each~$p\in\mathbb N$ there exists an infinite-parameter family of $p$-vector equations
of the form $v_x=Av$ with $A\in {\rm C}^\omega(\Omega,\mathrm M_p(\mathbb R))$
that possess only the zero solution on~$\Omega$.
\end{proposition}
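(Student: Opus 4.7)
\medskip

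\noindent\textbf{Proof proposal.}
The plan is to reduce the system case to the scalar case already established in Theorem~\ref{thm:NoNonzeroSolutionsForLinODEsWithParameterOnSpecialDomain}. First, I would invoke that theorem to fix a function $H\in {\rm C}^\omega(\Omega)$ (from the infinite-parameter family constructed there, with coefficients $c_{ki}>0$ playing the role of free parameters) such that the scalar equation $u_x=H(t,x)u$ admits only the zero solution on~$\Omega$. Without loss of generality we can assume that $\Omega$ is connected, so that the construction of~$H$ goes through verbatim.

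Next, given $p\in\mathbb N$, I would consider $p\times p$ matrix-valued functions of the special form
\[
A(t,x) \;=\; H(t,x)\,I_p \;+\; N(t,x),
\]
where $I_p$ is the identity matrix and $N\in {\rm C}^\omega(\Omega,\mathrm M_p(\mathbb R))$ is an arbitrary strictly upper-triangular real-analytic matrix. Clearly $A\in {\rm C}^\omega(\Omega,\mathrm M_p(\mathbb R))$. For any solution $v=(v^1,\dots,v^p)^{\mathsf T}\in {\rm C}^1_x(\Omega,\mathbb R^p)$ of the system $v_x=Av$, I would proceed by downward induction on the component index. The last component satisfies the uncoupled scalar equation $v^p_x=Hv^p$, so $v^p\equiv 0$ on~$\Omega$ by the choice of~$H$. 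Assuming inductively that $v^{s+1},\dots,v^p$ vanish identically on~$\Omega$, the $s$th scalar equation reduces, thanks to the strict upper-triangularity of $N$, to $v^s_x=Hv^s$, so again $v^s\equiv 0$. Hence $v=0$ on $\Omega$, as required.

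The infinite-parameter family is built in directly: both the constants $c_{ki}>0$ used in the definition of~$H$ (inherited from the scalar theorem) and the $p(p-1)/2$ strictly upper-triangular entries of~$N$, each ranging freely over the infinite-dimensional space ${\rm C}^\omega(\Omega)$, can be varied independently without affecting the conclusion. Alternatively, one could take $N\equiv 0$ and use the companion-matrix realization of the scalar $p$th order operator $(\p_x-H)^p$ (whose vanishing of nonzero solutions is also established in the proof of Theorem~\ref{thm:NoNonzeroSolutionsForLinODEsWithParameterOnSpecialDomain}) to obtain a different, equally natural, infinite-parameter subfamily.

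There is no genuine obstacle here: the scalar theorem already encodes all the delicate analytic and topological work (the choice of the dense sequence $(t_k)$ in~$J$, the construction of~$H$ via sums of Poisson-type kernels, and the ``blow-up across the punctures'' argument). The only point to verify is that the triangular structure is preserved through the induction, which is immediate from the decoupling of the last component. Thus the main content of the proposition lies in identifying the right algebraic reduction (triangular $A$ with constant diagonal~$H$) that transfers the scalar non-solvability to the vector setting.
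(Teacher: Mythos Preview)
Your proposal is correct and is precisely the kind of reduction to the scalar case that the paper has in mind: the paper omits the proof entirely, stating only that the results of Section~\ref{sec:FundamentalSolutionSetsOfLinODEsWithParameter} ``have direct analogues for the system case''. Your triangular choice $A=H I_p+N$ with strictly upper-triangular~$N$ is a clean and valid way to make this explicit; the alternative companion-matrix route via $(\p_x-H)^p$ that you mention is equally consonant with how the paper packages the scalar result.
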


\begin{theorem}\label{thm:NoNonzeroSolutionsForLinSystemsOfODEsWithParameter}
An open set~$\Omega$ contains no $x$-simple pieces if and only if
for each~$p\in\mathbb N$ there exists an infinite-parameter family of $p$-vector equations
of the form $v_x=Av$ with $A\in {\rm C}^\omega_x(\Omega,\mathrm M_p(\mathbb R))$
that possess only the zero solution on~$\Omega$.
\end{theorem}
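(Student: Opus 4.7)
The plan is to reduce the system statement to its scalar counterpart, Theorem~\ref{thm:NoNonzeroSolutionsForLinODEsWithParameter}, by passing to diagonal systems. The necessity direction is immediate from Corollary~\ref{cor:SystemsOnOpenSetsWithXSimplePieces}(1): if $\Omega$ possessed an $x$-simple piece, then every system $v_x=Av$ with $A\in{\rm C}(\Omega,\mathrm M_p(\mathbb R))$ would admit a nontrivial solution on~$\Omega$, ruling out any family whose members have only the zero solution.

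For sufficiency, I would proceed as follows. Assume $\Omega$ contains no $x$-simple pieces, and invoke the case $p=1$ of Theorem~\ref{thm:NoNonzeroSolutionsForLinODEsWithParameter} to obtain an infinite-parameter family of scalar equations $u_x=H(t,x)u$ with $H\in{\rm C}^\omega_x(\Omega)$, parameterised by the positive constants $c_{kl}$ of that construction, each of which admits only the zero solution on~$\Omega$. For any such $H$, set $A:=H\,I_p$, where $I_p$ denotes the $p\times p$ identity matrix. Then $A\in{\rm C}^\omega_x(\Omega,\mathrm M_p(\mathbb R))\subseteq{\rm C}(\Omega,\mathrm M_p(\mathbb R))$, and the system $v_x=Av$ decouples into $p$ independent copies of the scalar equation, $v^s_x=Hv^s$ for $s=1,\dots,p$. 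Each component $v^s$ therefore vanishes identically on~$\Omega$, whence $v\equiv0$. Distinct choices of the parameters $c_{kl}$ yield distinct $H$'s and thus distinct $A$'s, so the resulting family of $p$-vector systems is still infinite-parameter.

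There is essentially no obstacle beyond what is already absorbed into Theorem~\ref{thm:NoNonzeroSolutionsForLinODEsWithParameter}; the diagonalisation itself is elementary. One could alternatively start from a $p$-th order scalar equation $(\p_x-H)^pu=0$ delivered by Theorem~\ref{thm:NoNonzeroSolutionsForLinODEsWithParameter} and pass to its companion first-order $p$-vector system via~\eqref{eq:matrix_system}. A solution $v=(v^1,\dots,v^p)^{\mathsf T}$ of that companion system would force $v^1$ to satisfy the scalar equation, hence $v^1=0$, and then recursively $v^{s+1}=v^s_x=0$, giving $v\equiv0$. Since the statement imposes no structural constraints on $A$ beyond continuity, however, the diagonal route is the cleanest presentation.
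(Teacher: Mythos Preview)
Your argument is correct. The paper does not actually give a proof of this theorem: at the start of Section~\ref{sec:LinSystemsOfODEsWithParameter} it declares that the system results are ``direct analogues'' of the scalar ones and omits the proofs (except for the distributional propositions). Your reduction to Theorem~\ref{thm:NoNonzeroSolutionsForLinODEsWithParameter}---either by diagonalising with $A=H\,I_p$ or by passing to the companion system of $(\p_x-H)^p$ via~\eqref{eq:matrix_system}---is precisely the kind of argument the paper has in mind, and both variants you sketch are valid.
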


\begin{theorem}\label{thm:OnExistenceOfSolutionsOfInhomLinSystemsOfODEsWithParameter}
Given an open subset~$\Omega$ of the $(t,x)$-plane,
every inhomogeneous linear system of first-order ordinary differential equations $v_x=Av+F$
with $A\in {\rm C}(\Omega,\mathrm M_p(\mathbb R))$ and $F\in {\rm C}(\Omega,\mathbb R^p)$,
where $t$ plays the role of a parameter, admits continuous solutions on the entire~$\Omega$
if and only if each connected component of~$\Omega$ is an $x$-simple set.
\end{theorem}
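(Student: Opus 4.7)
My proof plan follows the scalar template of Theorem~\ref{thm:OnExistenceOfSolutionsOfInhomLinODEsWithParameter}, exploiting the fact that the system case is structurally parallel.

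\textbf{Sufficiency.} Assume every connected component of $\Omega$ is $x$-simple. Because the components are pairwise disjoint open sets, it suffices to construct a solution on each one separately and paste. So I may assume $\Omega$ itself is $x$-simple. By Lemma~\ref{lem:OnOpenX-SimpleRegion} there is a smooth $\theta\colon\pr_t\Omega\to\mathbb R$ whose graph lies in $\Omega$. By Theorem~\ref{thm:OnFundamentalMatrixOfLinSystemsOfODEsWithParameter} the homogeneous system $v_x=Av$ admits a fundamental matrix $\Phi\in {\rm C}^1_x(\Omega,\mathrm M_p(\mathbb R))$ with $\det\Phi\neq 0$ on $\Omega$, so $\Phi^{-1}$ is continuous on $\Omega$. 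I then define the variation-of-constants particular solution
\[
\psi(t,x) := \Phi(t,x)\int_{\theta(t)}^{x}\Phi^{-1}(t,x')F(t,x')\,{\rm d}x', \qquad (t,x)\in\Omega.
\]
A direct check shows $\psi\in {\rm C}^1_x(\Omega,\mathbb R^p)$ and $\psi_x = A\psi + F$. The general continuous solution is $v=\psi+\Phi\zeta$ with $\zeta\in {\rm C}(\pr_t\Omega,\mathbb R^p)$.

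\textbf{Necessity.} Suppose some connected component of $\Omega$ fails to be $x$-simple. I take $p=1$ (a $1\times 1$ system is literally a scalar first-order linear ODE with nonvanishing leading coefficient $1$). By Theorem~\ref{thm:OnExistenceOfSolutionsOfInhomLinODEsWithParameter} there already exists a scalar equation $u_x=f$ with $f\in {\rm C}(\Omega)$ admitting no continuous solution on $\Omega$; concretely one invokes Lemma~\ref{lem:OnOpenConnectedNonX-SimpleSets} on the offending component to obtain the geometric configuration with the segment $\Upsilon$, and then the equation $u_x = \bigl((x-\tilde x_1)^2+(t-\tilde t_0)^2\bigr)^{-1}$ of Example~\ref{ex:InfiniteSetFamilyOf1stOrderInhomLinODEWithoutSolutions} fails to be solvable. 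This is the required unsolvable system, with $p=1$, $A=0$ and $F=f$.

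\textbf{Main obstacle.} The substantive content is already in the cited results: the fundamental matrix (Theorem~\ref{thm:OnFundamentalMatrixOfLinSystemsOfODEsWithParameter}) and the scalar non-solvability (Theorem~\ref{thm:OnExistenceOfSolutionsOfInhomLinODEsWithParameter}). The only mild care required is to confirm that $\Phi^{-1}$ depends continuously on $(t,x)$ (immediate from Cramer's rule and $\det\Phi\neq 0$), so that $\psi$ as written is well defined and $C^1_x$; and that choosing the base point $\theta(t)$ from an $x$-simple parametrization keeps the inner integral well defined for every $(t,x)\in\Omega$, which is exactly the role of $x$-simplicity (it guarantees the horizontal segment from $(t,\theta(t))$ to $(t,x)$ stays in $\Omega$). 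I expect no genuine obstacle beyond book-keeping.
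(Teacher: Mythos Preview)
Your proposal is correct and is precisely the direct analogue of Theorem~\ref{thm:OnExistenceOfSolutionsOfInhomLinODEsWithParameter} that the paper has in mind (the paper explicitly omits the proof for this reason). The only cosmetic point is that if one reads the statement as being for each fixed $p$, the necessity is obtained by embedding your scalar counterexample as $A=0$, $F=(f,0,\dots,0)^{\mathsf T}$; otherwise your $p=1$ counterexample already suffices.
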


\begin{theorem}
If a connected component of an open set of~$\Omega$ is not $x$-simple,
then for each~$A\in {\rm C}(\Omega,\mathrm M_p(\mathbb R))$
the quotient space ${\rm C}(\Omega,\mathbb R^p)/(\p_x-A)({\rm C}^1_x(\Omega,\mathbb R^p))$ is infinite dimensional.
\end{theorem}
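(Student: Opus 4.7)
The plan is to mirror the scalar argument of Theorem~\ref{thm:QuotientSpacesForLinODEOpsWithParameter}, exploiting the fact that for first-order systems the variation-of-constants formula is cleaner than its high-order scalar counterpart and the delicate sub-Wronskian selection step is replaced by a straightforward choice of column of the fundamental matrix. Without loss of generality I assume $\Omega$ itself is the non-$x$-simple connected component. First, I would apply Lemma~\ref{lem:OnOpenConnectedNonX-SimpleSets} to extract $\tilde t_0,\varepsilon,\tilde x_1,\tilde x_2$ and the closed forbidden set $\Upsilon\subset\{\tilde t_0\}\times[\tilde x_1,\tilde x_2]$. Following the exact construction in the proof of Theorem~\ref{thm:QuotientSpacesForLinODEOpsWithParameter}, I would build an open $x$-simple subset $\Xi\subset\Omega$ consisting of a ``left rectangle'' just to the left of the line $t=\tilde t_0$ together with thin shelves $[\tilde t_0,\tilde t_0+\delta_k)\times[\tilde x_1-\varepsilon/k,\tilde x_1-\varepsilon/(k+1))$ extending to the right. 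Since $\Xi$ is $x$-simple, the system analogue of Theorem~\ref{thm:OnFundSolutionSetOfLinODEsWithParameter}, i.e.~Theorem~\ref{thm:OnFundamentalMatrixOfLinSystemsOfODEsWithParameter}, supplies a fundamental matrix $\Phi\in {\rm C}^1_x(\Xi,\mathrm M_p(\mathbb R))$ of $v_x=Av$ on~$\Xi$ with $\Phi(t,\tilde x_1-\varepsilon)=I_p$ for $t\in\pr_t\Xi$; Corollary~\ref{cor:DetOfFundMatrix} ensures $\det\Phi\ne 0$ on~$\Xi$, so $\Phi^{-1}\in {\rm C}(\Xi,\mathrm M_p(\mathbb R))$.

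Any solution $v\in {\rm C}^1_x(\Xi,\mathbb R^p)$ of $v_x=Av+F$ then satisfies
\begin{equation*}
v(t,x)=\Phi(t,x)v(t,\tilde x_1-\varepsilon)+\Phi(t,x)\int_{\tilde x_1-\varepsilon}^{x}\Phi(t,x')^{-1}F(t,x')\,{\rm d}x'.
\end{equation*}
Set $t^*_0:=\tilde t_0-\varepsilon-\delta_1$ and $t^*_k:=\tilde t_0-\varepsilon/k$, and choose $x^*_k\in[\tilde x_2+\varepsilon/2,\tilde x_2+\varepsilon]$ together with a column index $j_k\in\{1,\dots,p\}$ such that the $j_k$-th column of $\Phi(t^*_k,x^*_k)\Phi(t^*_k,\tilde x_1)^{-1}$ has norm at least $1/p$ times the maximum column norm of that matrix; such $j_k$ exists because the matrix is invertible. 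Pick scalar bumps $\chi^k\in {\rm C}(\mathbb R)$ with $\chi^k(t^*_k)=1$, $\chi^k\geqslant 0$, $\supp\chi^k\subseteq[t^*_{k-1},t^*_{k+1}]$, and $f^k\in {\rm C}(\mathbb R)$ nonnegative and supported in $\bar B_{\varepsilon_k}(\tilde x_1)$ with $\varepsilon_k$ small enough that $\{t^*_k\}\times\supp f^k\subset\Xi$, and with $\int f^k\,{\rm d}x$ so large that the contribution of $F^k(t,x):=\chi^k(t)f^k(x)e_{j_k}$ via the integral above exceeds $k$ in absolute value when projected on a fixed coordinate at $(t^*_k,x^*_k)$. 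The function $F:=\sum_k F^k\in {\rm C}(\Omega,\mathbb R^p)$ (extended by zero off the shelves) then has the property that any $v\in {\rm C}^1_x(\Xi,\mathbb R^p)$ solving $v_x=Av+F$ satisfies $|v(t^*_k,x^*_k)|>k$ for all sufficiently large $k$, since the boundary term $\Phi(t^*_k,x^*_k)v(t^*_k,\tilde x_1-\varepsilon)$ is bounded uniformly in $k$ by the continuity of $v$ on the compact segment $[\tilde t_0-\varepsilon,\tilde t_0]\times\{\tilde x_1-\varepsilon\}$. Extracting a convergent subsequence of $(x^*_k)$ with limit $x^*_\infty\in[\tilde x_2+\varepsilon/2,\tilde x_2+\varepsilon]$, we find that $(t^*_k,x^*_k)\to(\tilde t_0,x^*_\infty)\in\Omega$, contradicting continuity of any extension of $v$ to all of $\Omega$. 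This establishes $F\notin(\p_x-A)({\rm C}^1_x(\Omega,\mathbb R^p))$.

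To upgrade to infinite codimension, I would iterate the construction: for each $m\in\mathbb N$ let $F_m:=\sum_{k\geqslant m}F^k$. Each $F_m$ yields, by the same blowup argument applied to the tail $(t^*_k)_{k\geqslant m}$, a right-hand side outside $(\p_x-A)({\rm C}^1_x(\Omega,\mathbb R^p))$. Moreover, the differences $F_m-F_{m+1}=F^m\chi^m$ have pairwise disjoint $t$-supports, and in any finite linear combination $\sum c_m F_m=(\p_x-A)v$ one reads off the coefficients inductively: the strip $\supp\chi^m$ only receives a contribution from $F^m$ with coefficient $\sum_{m'\leqslant m}c_{m'}$, so the blowup argument applied at the tail index yields $\sum_{m'}c_{m'}=0$, and localizing further gives each $c_m=0$. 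Hence $\{[F_m]\}_{m\in\mathbb N}$ is linearly independent in ${\rm C}(\Omega,\mathbb R^p)/(\p_x-A)({\rm C}^1_x(\Omega,\mathbb R^p))$.

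The main obstacle, as in the scalar case, is the bookkeeping needed to pass from ``one bad right-hand side'' to an infinite linearly independent family modulo the image; the system setting simplifies the matrix-algebraic core (no sub-Wronskians, just column selection), but the careful coordination of the magnitudes $\int f^k\,{\rm d}x$, the supports of the $\chi^k$, and the shelf geometry of $\Xi$ must be preserved essentially verbatim from the scalar proof.
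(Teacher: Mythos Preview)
Your overall strategy is exactly what the paper intends: the system theorem is stated without proof as the analogue of the scalar Theorem~\ref{thm:QuotientSpacesForLinODEOpsWithParameter}, and your rendering via a fundamental matrix on the auxiliary $x$-simple set~$\Xi$ together with variation of constants is the natural translation. The construction of a single bad right-hand side $F=\sum_kF^k$ is essentially sound, though your claim that ``the boundary term $\Phi(t^*_k,x^*_k)v(t^*_k,\tilde x_1-\varepsilon)$ is bounded uniformly in~$k$'' is not justified: $\|\Phi(t^*_k,x^*_k)\|$ need not stay bounded as $t^*_k\to\tilde t_0$, since the limit point lies outside~$\Xi$. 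This is precisely why the scalar proof calibrates $\int f^k\,{\rm d}x$ against the size of the fundamental solutions at~$x^*_k$ (the factor $1+\sum_s|\varphi^{kps}(x^*_k)|$), not against an absolute constant. Your ``column selection'' does simplify the scalar sub-Wronskian step, but part of the purpose of that step is to keep the transition matrices $(d_{kss'})$ uniformly bounded in~$k$, which your argument must still arrange.

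The principal gap is the passage to infinite codimension. Your family $F_m=\sum_{k\geqslant m}F^k$ can collapse to a single class in the quotient. Take $\Omega=\mathbb R^2\setminus\{(0,0)\}$ with $\tilde t_0=0$: each $F^k$ has $t$-support in a compact interval inside $(-\varepsilon,0)$, and over that strip $\Omega$ is $x$-simple, so $v_x=Av+F^k$ is globally solvable on~$\Omega$ by Theorem~\ref{thm:OnExistenceOfSolutionsOfInhomLinSystemsOfODEsWithParameter}. Hence $[F^k]=0$ and $[F_m]=[F_{m+1}]$ for every~$m$. Once your tail argument yields $\sum_mc_m=0$, the residual right-hand side has $t$-support bounded away from~$\tilde t_0$ and the blow-up mechanism---which requires an \emph{infinite} sequence $t^*_k\to\tilde t_0$---is no longer available, so ``localizing further'' cannot recover the individual~$c_m$. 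A correct repair is to partition $\mathbb N=\bigsqcup_{m}N_m$ into infinitely many infinite sets, choose the $\chi^k$ with pairwise disjoint supports, and set $F_{(m)}:=\sum_{k\in N_m}F^k$; then any nontrivial finite combination with $c_{m_0}\ne0$ restricts at $t=t^*_k$, $k\in N_{m_0}$, to $c_{m_0}f^k e_{j_k}$, and the blow-up along this still-infinite subsequence yields the contradiction.
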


\begin{corollary}
If a connected component of an open set of~$\Omega$ is not $x$-simple,
then for each~$A\in {\rm C}^\infty(\Omega,\mathrm M_p(\mathbb R))$
the quotient space ${\rm C}^\infty(\Omega,\mathbb R^p)/(\p_x-A)({\rm C}^\infty(\Omega,\mathbb R^p))$ is infinite dimensional.
\end{corollary}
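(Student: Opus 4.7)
The plan is to mirror the ``inspection of proof'' argument that the text uses to pass from Theorem~\ref{thm:QuotientSpacesForLinODEOpsWithParameter} to its ${\rm C}^\infty$-corollary. The preceding system-level theorem is established by constructing, inside an $x$-simple piece $\Xi \subseteq \Omega$ supplied by Lemma~\ref{lem:OnOpenConnectedNonX-SimpleSets}, right-hand sides of the form $F = \sum_{k=1}^\infty F^k\chi^k$, where the vector bumps $F^k$ are concentrated in $x$ near $(\tilde t_0,\tilde x_1)$ and the scalar bumps $\chi^k$ are concentrated in $t$ at a sequence $t^*_k\to\tilde t_0{}^-$, with $\supp\chi^k\subseteq[t^*_{k-1},t^*_{k+1}]$. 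These bumps are only required to satisfy certain support, nonnegativity and integral/amplitude conditions.

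First I would replace the continuous bumps by smooth bumps with the very same support, nonnegativity and integral specifications; this is always possible by standard mollification. Since the supports of the summands $F^k\chi^k$ are locally finite in $\Omega$ (each $t$-window $[t^*_{k-1},t^*_{k+1}]$ meets only finitely many of its neighbours), the series $\sum_k F^k\chi^k$ has at most finitely many nonzero terms on every compact subset of $\Omega$ and therefore defines an element $F\in {\rm C}^\infty(\Omega,\mathbb R^p)$.

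Next, suppose $v\in {\rm C}^\infty(\Omega,\mathbb R^p)$ solves $(\p_x-A)v=F$. In particular $v\in {\rm C}^1_x(\Omega,\mathbb R^p)$, so the blow-up argument in the proof of the preceding theorem applies verbatim: restricted to the $x$-simple piece $\Xi$, the solution $v$ must satisfy $|v(t^*_k,x^*_k)|\to\infty$ along a sequence $(t^*_k,x^*_k)\in\Xi$ converging to a point of $\Omega$, contradicting the continuity of $v$ at that limit point. Hence $F\notin(\p_x-A)({\rm C}^\infty(\Omega,\mathbb R^p))$.

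To upgrade this single obstruction to infinite dimensionality, I would exploit the fact that the construction depends on infinitely many independent parameters (e.g.~the positive amplitudes of the $\chi^k$). Partition $\mathbb N$ into disjoint infinite subsets $(N_j)_{j\in\mathbb N}$ and, for each $j$, build as above an $F_j\in {\rm C}^\infty(\Omega,\mathbb R^p)$ using only indices $k\in N_j$; the $t$-supports of $F_j$ and $F_{j'}$ need not be disjoint, but each $F_j$ still forces blow-up along its own subsequence. For any nontrivial finite linear combination $\sum_j c_j F_j$, the subsequence corresponding to the largest index $j$ with $c_j\ne0$ still produces blow-up, so the combination is not in $(\p_x-A)({\rm C}^\infty(\Omega,\mathbb R^p))$. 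Consequently the classes $[F_j]$ are linearly independent in the quotient. The main obstacle is the bookkeeping behind this last step, i.e.~ensuring that the dominant-subsequence argument really isolates one $F_j$ from the others; this is handled by taking the amplitudes within each $N_j$ to grow so fast that the blow-up rate along $N_j$ dwarfs the bounded contributions from indices outside $N_j$, exactly in the spirit of the estimates in the proof of the preceding theorem.
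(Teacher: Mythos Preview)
Your proposal is correct and matches the paper's approach: the paper offers no separate proof for this corollary, deferring to the blanket remark that the system-level results of Section~\ref{sec:LinSystemsOfODEsWithParameter} are direct analogues of the scalar ones, and the scalar ${\rm C}^\infty$-corollary is itself justified only by ``an inspection of the above proof'' of Theorem~\ref{thm:QuotientSpacesForLinODEOpsWithParameter}; your reconstruction is exactly the intended argument.

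One simplification for your last step: since $\supp\chi^k\subseteq[t^*_{k-1},t^*_{k+1}]$ and each $\chi^{k'}$ is continuous, we have $\chi^{k'}(t^*_k)=0$ for every $k'\ne k$, so at $t=t^*_k$ with $k\in N_{j_0}$ the combination $\sum_j c_j F_j$ reduces \emph{exactly} to $c_{j_0}f^k$. Hence any index $j_0$ with $c_{j_0}\ne0$ already produces the blow-up (with the estimate scaled by $|c_{j_0}|$), and there is no need for a ``largest index'' selection or for amplitude growth within the~$N_j$.
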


Finally, we turn to the case of distributional solutions of parameter-dependent systems:

\begin{proposition}\label{pro:DistrSolutionsOfLinSystemsOfODEsWithParameter}
Given an $x$-simple open subset~$\Omega$ of~$\mathbb R^2$,
an arbitrary $A\in {\rm C}^\infty(\Omega,\mathrm M_p(\mathbb R))$ and an arbitrary $F\in {\rm C}^\infty(\Omega,\mathbb R^p)$,
the general solution of the system $v_x=Av+F$ in $\mathcal D'(\Omega,\mathbb R^p)$
can be represented in the~form
$v=T_\psi+\Phi\cdot(\zeta\otimes T_{\mathbf 1_{\mathbb R}})\big|_\Omega$,
where the tensor product is understood componentwise,
$T_\psi$ and $T_{\mathbf 1_{\mathbb R}}$ are the regular distributions
associated with a particular solution $\psi\in {\rm C}^\infty(\Omega,\mathbb R^p)$ of this system
and with the indicator function~$\mathbf 1_{\mathbb R}$ of~$\mathbb R$, respectively,
$\Phi$ is a smooth fundamental matrix of this system on~$\Omega$,
and $\zeta$ runs through $\mathcal D'(\pr_t\Omega,\mathbb R^p)$.
\end{proposition}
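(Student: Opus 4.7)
The plan is to reduce the problem, by means of the fundamental matrix and a particular solution, to a purely structural question about distributions with vanishing $x$-derivative on an $x$-simple open set, which is precisely the content of the structure theorem proved in Appendix~\ref{sec:DistributionsWithVanishingPartialDerivatives}.

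First, I would extract the two smooth objects needed. Applying Theorem~\ref{thm:OnFundamentalMatrixOfLinSystemsOfODEsWithParameter} together with the construction of solutions to Cauchy problems along the graph of a smooth function $\theta\in {\rm C}^\infty(\pr_t\Omega)$ supplied by Lemma~\ref{lem:OnOpenX-SimpleRegion}, I obtain a smooth fundamental matrix $\Phi\in {\rm C}^\infty(\Omega,\mathrm M_p(\mathbb R))$ with $\det\Phi\ne0$ on~$\Omega$, so that $\Phi^{-1}\in {\rm C}^\infty(\Omega,\mathrm M_p(\mathbb R))$ as well. Next, Theorem~\ref{thm:OnExistenceOfSolutionsOfInhomLinSystemsOfODEsWithParameter} combined with the Lagrange-type formula~\eqref{eq:ParticularSolutionOfInhomLinODEsWithParameter} (rewritten for systems) produces a smooth particular solution $\psi\in {\rm C}^\infty(\Omega,\mathbb R^p)$ of $\psi_x=A\psi+F$, since the integrand inherits smoothness from $A$, $F$ and $\Phi^{-1}$.

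Next, I would perform the change of dependent variable $w:=\Phi^{-1}(v-T_\psi)$, which makes sense componentwise in $\mathcal D'(\Omega,\mathbb R^p)$ because multiplication by the smooth matrix $\Phi^{-1}$ is well-defined on distributions. The key computation uses the Leibniz rule and the identity $(\Phi^{-1})_x=-\Phi^{-1}\Phi_x\Phi^{-1}=-\Phi^{-1}A$: for any $v\in\mathcal D'(\Omega,\mathbb R^p)$ satisfying $v_x=Av+F$, one finds
\[
w_x=(\Phi^{-1})_x(v-T_\psi)+\Phi^{-1}(v-T_\psi)_x=-\Phi^{-1}A(v-T_\psi)+\Phi^{-1}A(v-T_\psi)=0,
\]
and conversely $v:=T_\psi+\Phi w$ solves the original system whenever $w_x=0$. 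Hence the map $v\mapsto w$ is a bijection between solutions of $v_x=Av+F$ in $\mathcal D'(\Omega,\mathbb R^p)$ and distributions in $\mathcal D'(\Omega,\mathbb R^p)$ with vanishing $x$-derivative.

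Finally, I would invoke the structure theorem from Appendix~\ref{sec:DistributionsWithVanishingPartialDerivatives}: on an $x$-simple open set $\Omega$, a distribution $w\in\mathcal D'(\Omega)$ satisfies $w_x=0$ if and only if there is a unique $\zeta\in\mathcal D'(\pr_t\Omega)$ with $w=(\zeta\otimes T_{\mathbf 1_{\mathbb R}})\big|_\Omega$. Applying this componentwise, substituting back, and using the invertibility of $\Phi$ to transfer uniqueness to~$\zeta$, one obtains the asserted representation $v=T_\psi+\Phi\cdot(\zeta\otimes T_{\mathbf 1_{\mathbb R}})\big|_\Omega$. The main obstacle is the Appendix theorem itself: the restriction $(\zeta\otimes T_{\mathbf 1_{\mathbb R}})|_\Omega$ is only meaningful, and the parametrization by a single $\zeta\in\mathcal D'(\pr_t\Omega)$ only correct, because each slice $\Omega_t$ is an interval; on non-$x$-simple sets one would need several independent distributional ``constants in~$x$'' per slice, cf.\ Example~\ref{ex:1stOrderODEWithNonzeroDistrSolutions}. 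Once that structure theorem is in hand, everything else is a formal verification.
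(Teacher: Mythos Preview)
Your proof is correct and follows essentially the same route as the paper: fix a smooth particular solution~$\psi$ and a smooth fundamental matrix~$\Phi$, set $\tilde v:=\Phi^{-1}(v-T_\psi)$, observe that $\tilde v_x=0$, and then apply Theorem~\ref{thm:distributional_derivative_tensor_product_y_simple} from the Appendix componentwise. You supply more detail than the paper (the explicit Leibniz computation and the justification of smoothness of $\Phi$ and $\psi$), but the underlying argument is identical.
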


\begin{proof}
We fix a particular solution $\psi\in {\rm C}^\infty(\Omega,\mathbb R^p)$ of the system $v_x=Av+F$
and a smooth fundamental matrix $\Phi$ of this system on~$\Omega$
(cf.\ Theorem \ref{thm:OnFundamentalMatrixOfLinSystemsOfODEsWithParameter}).
If a distribution $v\in\mathcal D'(\Omega,\mathbb R^p)$ satisfies the system $v_x=Av+F$,
then the distribution $\tilde v:=\Phi^{-1}(v-T_\psi)$ satisfies the system \mbox{$\tilde v_x=0$}.
By Theorem \ref{thm:distributional_derivative_tensor_product_y_simple},
the general distributional solution of the latter system is
\mbox{$\tilde v=(\zeta\otimes T_{\mathbf 1_{\mathbb R}})\big|_\Omega$},
where $\zeta$ runs through $\mathcal D'(\pr_t\Omega,\mathbb R^p)$.
\end{proof}

Similarly to Proposition~\ref{pro:DistrSolutionsOfLinODEsWithParameter},
Proposition~\ref{pro:DistrSolutionsOfLinSystemsOfODEsWithParameter} can be extended to right hand sides of lower regularity.
Thus, for $F\in {\rm C}(\Omega,\mathbb R^p)$ or $F\in\mathcal D'(\Omega,\mathbb R^p)$
it suffices to replace the condition $\psi\in {\rm C}^\infty(\Omega,\mathbb R^p)$ by the condition $\psi\in {\rm C}^1_x(\Omega,\mathbb R^p)$
or $T_\psi$ by $\psi\in\mathcal D'(\Omega,\mathbb R^p)$,
respectively.

\begin{proposition}\label{pro:SemiregularDistrSolutionsOfLinSystemsOfODEsWithParameter}
Given an $x$-simple open subset~$\Omega$ of~$\mathbb R^2$,
an arbitrary $A\in {\rm C}^\infty(\Omega,\mathrm M_p(\mathbb R))$ and an arbitrary $F\in {\rm C}(\Omega,\mathbb R^p)$ (resp.\ $F\in {\rm C}^\infty(\Omega,\mathbb R^p)$),
any solution of the system $v_x=Av+F$ in ${\rm C}^0_t(\mathcal D'_x)(\Omega,\mathbb R^p)$ (resp.\ ${\rm C}^\infty_t(\mathcal D'_x)(\Omega,\mathbb R^p)$)
is a regular distribution associated with a classical (resp.\ smooth) solution  of this system.
\end{proposition}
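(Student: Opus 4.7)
The plan is to reduce to the homogeneous case via a smooth change of variables, apply the appendix's structure theorem for $\partial_x$-null distributions on an $x$-simple domain, and then upgrade the regularity of the resulting ``constant of integration'' $\zeta$ using the semiregularity hypothesis on~$v$.

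First, by Theorem~\ref{thm:OnFundamentalMatrixOfLinSystemsOfODEsWithParameter} there exists a smooth fundamental matrix $\Phi\in {\rm C}^\infty(\Omega,\mathrm M_p(\mathbb R))$ of the homogeneous system $v_x=Av$, with $\det\Phi\ne0$ on~$\Omega$, so $\Phi^{-1}\in {\rm C}^\infty(\Omega,\mathrm M_p(\mathbb R))$ as well. Following \eqref{eq:ParticularSolutionOfInhomLinODEsWithParameter} in the system setting, I fix a particular classical solution $\psi_0$ of $v_x=Av+F$ on~$\Omega$; this $\psi_0$ belongs to ${\rm C}^1_x(\Omega,\mathbb R^p)\cap {\rm C}(\Omega,\mathbb R^p)$ when $F\in {\rm C}(\Omega,\mathbb R^p)$ and to ${\rm C}^\infty(\Omega,\mathbb R^p)$ when $F\in {\rm C}^\infty(\Omega,\mathbb R^p)$. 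I then set $\tilde v:=\Phi^{-1}(v-T_{\psi_0})$, which makes sense in $\mathcal D'(\Omega,\mathbb R^p)$ because $\Phi^{-1}$ is smooth. A direct computation using $\Phi_x=A\Phi$ and $(\psi_0)_x=A\psi_0+F$ gives $\tilde v_x=0$ in $\mathcal D'(\Omega,\mathbb R^p)$. Moreover, multiplication by the smooth $\Phi^{-1}$ and subtraction of the ${\rm C}^0$-semiregular (in~$t$) regular distribution $T_{\psi_0}$ preserve the classes ${\rm C}^0_t(\mathcal D'_x)(\Omega,\mathbb R^p)$ and ${\rm C}^\infty_t(\mathcal D'_x)(\Omega,\mathbb R^p)$, so $\tilde v$ lies in the same semiregularity class as $v$.

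Second, the structure theorem proved in the appendix (Theorem~\ref{thm:distributional_derivative_tensor_product_y_simple}) applied to the $x$-simple set~$\Omega$ yields a unique $\zeta\in\mathcal D'(\pr_t\Omega,\mathbb R^p)$ with
\[
\tilde v=(\zeta\otimes T_{\mathbf 1_{\mathbb R}})\big|_\Omega.
\]
The task is now to show that this $\zeta$ is actually continuous (resp.\ smooth). Fix any open rectangle $I\times J\subset\Omega$ and a test function $\psi\in {\rm C}_c^\infty(J)$ with $\int_J\psi(x)\,{\rm d}x=1$. For every $\chi\in {\rm C}_c^\infty(I)$, on the one hand the definition of the tensor product gives
\[
\blara{\tilde v}{\chi\otimes\psi}=\lara{\zeta}{\chi},
\]
while on the other hand the semiregularity in~$t$ yields
\[
\blara{\tilde v}{\chi\otimes\psi}=\int_I\chi(t)\blara{\tilde v(t,\cdot)}{\psi}\,{\rm d}t.
\]
Equating these identities for all such~$\chi$ identifies $\zeta\big|_I$ with the regular distribution associated to the function $t\mapsto\lara{\tilde v(t,\cdot)}{\psi}$; this function is continuous on~$I$ by the definition of ${\rm C}^0_t(\mathcal D'_x)$ (resp.\ smooth on~$I$ by the definition of ${\rm C}^\infty_t(\mathcal D'_x)$). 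Covering $\pr_t\Omega$ with such intervals and invoking uniqueness of $\zeta$ gives $\zeta\in {\rm C}(\pr_t\Omega,\mathbb R^p)$ (resp.\ ${\rm C}^\infty(\pr_t\Omega,\mathbb R^p)$).

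Finally, let $\eta(t,x):=\zeta(t)$, viewed as the pullback of $\zeta$ to~$\Omega$ along $\pr_t$. Then $\tilde v=T_\eta$, so
\[
v=T_{\Phi\eta+\psi_0},
\]
which is a regular distribution on~$\Omega$. In the continuous case, $\Phi\eta+\psi_0$ is continuous on~$\Omega$ and, upon differentiating the equation $v_x=Av+F$ interpreted classically, it lies in~${\rm C}^1_x(\Omega,\mathbb R^p)$ and solves the system in the classical sense; in the smooth case, $\Phi\eta+\psi_0\in {\rm C}^\infty(\Omega,\mathbb R^p)$ and is a smooth solution. I expect the main technical point to be the identification in the second paragraph, that is, verifying carefully that the semiregularity of a tensor product $\zeta\otimes T_{\mathbf 1_{\mathbb R}}|_\Omega$ forces $\zeta$ itself to inherit the corresponding pointwise regularity in~$t$; everything else is bookkeeping around the smooth ``variation of constants'' reduction.
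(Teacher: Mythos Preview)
Your proof is correct and follows essentially the same route as the paper: reduce to $\tilde v_x=0$ via the smooth fundamental matrix and a particular solution, invoke Theorem~\ref{thm:distributional_derivative_tensor_product_y_simple} to write $\tilde v=(\zeta\otimes T_{\mathbf 1_{\mathbb R}})|_\Omega$, and then use the $t$-semiregularity to force $\zeta$ to be continuous (resp.\ smooth). The paper's proof is extremely terse and simply asserts the last implication, whereas you supply the natural pairing argument with $\chi\otimes\psi$ on a rectangle that actually justifies it; this is a welcome addition rather than a different approach.
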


\begin{proof}
In the notation of the proof of Proposition~\ref{pro:DistrSolutionsOfLinSystemsOfODEsWithParameter},
if $v$ is ${\rm C}^0$-semiregular in~$t$ (resp.\ ${\rm C}^\infty$-semi\-regular in~$t$),
then $\tilde v$ is of the same semiregularity in~$t$
and hence the corresponding tuple $\zeta$ belongs to ${\rm C}(\pr_t\Omega,\mathbb R^p)$
(resp.\ ${\rm C}^\infty(\pr_t\Omega,\mathbb R^p)$).
 \end{proof}

\begin{appendices}

\section{Distributions with vanishing partial derivatives}\label{sec:DistributionsWithVanishingPartialDerivatives}

In this appendix we collect some results
required in Sections~\ref{sec:DistrSolutionsOfLinODEsWithParameter} and~\ref{sec:LinSystemsOfODEsWithParameter}
for deriving the general form of distributional solutions to linear (systems of) ODEs.

For a distribution $u\in \mathcal D'(\mathbb R^{n+1})$ it is well known (cf.\ \cite[Chapitre~IV, \S~5]{Schwartz1966}, \cite[Theorem~4.3.4]{Friedlander1998})
that $\p_{x_{n+1}}u=0$ if and only if $u$ is of the form $v\otimes T_{\mathbf 1_{\mathbb R}}$ for some $v\in\mathcal D'(\mathbb R^n)$.
For $u\in\mathcal D'(\Omega)$ with~$\Omega$ an arbitrary open subset of $\mathbb R^n$ such a result cannot be expected.
Nevertheless, we shall show that if $\Omega$ is $x_{n+1}$-simple, then a suitable generalization does indeed hold.

We first note that \cite[Theorem 4.3.4]{Friedlander1998} remains true for more general products,
and we include a proof for the sake of completeness:

\begin{theorem}\label{thm:distributional_derivative_tensor_product}
Let $X\subset \mathbb R^n$ be open, $Y=(a,b)$, $-\infty\le a <b\le \infty$, $n\in \mathbb N_0$
(setting $X\times Y:=Y$ in case $n=0$),
and let $u\in \mathcal D'(X\times Y)$. Then
\[
\p_y u = 0 \ \Leftrightarrow \exists\ v\in \mathcal D'(X)\colon\ u(x,y) = v\otimes T_{\mathbf 1_Y}.
\]
\end{theorem}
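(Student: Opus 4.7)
The plan is to handle the two implications separately, the forward direction being the substantive one. For ($\Leftarrow$): if $u = v\otimes T_{\mathbf 1_Y}$, then for any $\varphi \in \mathcal D(X\times Y)$,
\[
\langle \p_y u, \varphi \rangle = -\langle u, \p_y\varphi\rangle = -\Big\langle v, \int_Y \p_y \varphi(\cdot,y)\,dy\Big\rangle,
\]
and the inner integral vanishes by the fundamental theorem of calculus applied to the compactly supported function $\varphi(x,\cdot)$.

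For ($\Rightarrow$) I would reproduce the classical Schwartz antiderivative construction. Fix once and for all a function $\chi \in \mathcal D(Y)$ with $\int_Y \chi(y)\,dy = 1$. For each test function $\varphi \in \mathcal D(X\times Y)$, set $\Phi(x) := \int_Y \varphi(x,y)\,dy$ (a function in $\mathcal D(X)$ when $n \geq 1$, or a scalar when $n=0$) and define
\[
\Psi(x,y) := \int_a^y \bigl(\varphi(x,s) - \chi(s)\Phi(x)\bigr)\,ds,
\]
so that the pointwise decomposition $\varphi = \p_y\Psi + \chi \otimes \Phi$ holds by construction. The candidate distribution is then $v \in \mathcal D'(X)$ defined by $\langle v, \Phi\rangle := \langle u, \chi\otimes\Phi\rangle$; its continuity is inherited from $u$ via continuity of the map $\Phi \mapsto \chi\otimes\Phi$ between the relevant test-function spaces. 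Assuming $\Psi \in \mathcal D(X\times Y)$, applying $u$ to the decomposition and invoking the hypothesis $\p_y u = 0$ yields
\[
\langle u,\varphi\rangle = -\langle \p_y u,\Psi\rangle + \langle u,\chi\otimes\Phi\rangle = \langle v,\Phi\rangle = \langle v\otimes T_{\mathbf 1_Y},\varphi\rangle,
\]
which is the claim.

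The only real obstacle is verifying that $\Psi$ actually lies in $\mathcal D(X\times Y)$; this is precisely what the correction by $\chi \otimes \Phi$ is designed to arrange. Compact support in~$y$ follows because, thanks to the normalization $\int_Y\chi = 1$, the integrand $\varphi(x,s) - \chi(s)\Phi(x)$ has zero integral over $Y$, so $\Psi(x,y)$ vanishes once $y$ exceeds both the $y$-support of $\chi$ and the $y$-support of $\varphi(x,\cdot)$ (and trivially below their union). Compact support in~$x$ follows because for $x$ outside the projection of $\supp\varphi$ to the first factor, one has $\varphi(x,\cdot)\equiv 0$ and hence $\Phi(x)=0$, so that $\Psi(x,\cdot)\equiv 0$. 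Once this check is in place, the rest of the argument is routine bookkeeping; the degenerate case $n=0$ (where $X\times Y = Y$, $\Phi$ is a scalar and $v$ reduces to a real constant) is subsumed without any separate treatment. As a side remark, independence of $v$ from the specific choice of $\chi$ with $\int\chi=1$ is automatic: if $\chi_1,\chi_2$ both have integral one, then $\chi_1-\chi_2 = \psi'$ for some $\psi \in \mathcal D(Y)$, whence $\langle u,(\chi_1-\chi_2)\otimes\Phi\rangle = -\langle \p_y u,\psi\otimes\Phi\rangle = 0$.
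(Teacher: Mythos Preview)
Your proof is correct and follows essentially the same approach as the paper's: fix $\chi\in\mathcal D(Y)$ with unit integral, define $v$ by $\langle v,\psi\rangle=\langle u,\psi\otimes\chi\rangle$, and use the antiderivative of $\varphi-\Phi\otimes\chi$ to show $u-v\otimes T_{\mathbf 1_Y}$ vanishes. The only differences are cosmetic: you supply more detail on why $\Psi\in\mathcal D(X\times Y)$ and add the (optional) remark on independence from~$\chi$, while the paper is terser; also note your tensor factors are written in the order $\chi\otimes\Phi$ rather than the paper's $\Phi\otimes\chi$, which is harmless but worth aligning with the $X\times Y$ convention.
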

\begin{proof}
$(\Leftarrow)$: $\p_y(v\otimes T_{\mathbf 1_Y}) = v\otimes\p_y T_{\mathbf 1_Y} = v\otimes 0 = 0$.

\noindent$(\Rightarrow)$: Pick some $\chi\in \mathcal D(Y)$ with $\int_Y \chi(y)\,{\rm d}y=1$
and define $v\colon \mathcal D(X)\to \mathbb R$ by
\[
\lara{v}{\psi}:=\lara{u(x,y)}{\psi(x)\otimes\chi(y)}\quad \mbox{for all}\quad \psi\in \mathcal D(X).
\]
Then $v\in \mathcal D'(X)$ and for any $\varphi\in \mathcal D(X\times Y)$ we have
\begin{align*}
\lara{v\otimes T_{\mathbf 1_Y}}{\varphi}
&= \blara{v(x)}{\lara{T_{\mathbf 1_Y}}{\varphi(x,y)}}=\blara{v(x)}{\textstyle\int_Y\varphi(x,y)\,{\rm d}y}\\[.5ex]
&= \blara{u(x,y)}{\textstyle\int_Y\varphi(x,y')\,{\rm d}y'\otimes \chi(y)}.
\end{align*}
Hence $\lara{u-v\otimes T_{\mathbf 1_Y}}{\varphi} = \lara{u}{\phi}$, where $\phi(x,y)=\varphi(x,y)-\int_Y\varphi(x,y')\,{\rm d}y'\otimes \chi(y)$.
Now for any $x\in X$ we have $\int_Y \phi(x,y)\,{\rm d}y = 0$, so that $\psi(x,y):=\int_a^y \phi(x,y')\,{\rm d}y'$ defines an
element of $\mathcal D(X\times Y)$ and satisfies $\p_y \psi=\phi$. Consequently,
\[
\lara{u-v\otimes T_{\mathbf 1_Y}}{\varphi} = \lara{u}{\phi} = \lara{u}{\p_y \psi} = -\lara{\p_y u}{\psi} = 0,
\]
which completes the proof.
\end{proof}

In analogy to Definition \ref{def:XSimpleSet} we say that an open subset $\Omega$ of $\mathbb R^n_x\times \mathbb R_y$
(where the subscripts refer to the names of the corresponding variables)
is \emph{$y$-simple} if, for all $x\in \mathbb R^n$,
the intersection $\Omega_x:=(\{x\}\times \mathbb R) \cap \Omega$ is connected
or is the empty set.
Using this terminology, we have:

\begin{theorem}\label{thm:distributional_derivative_tensor_product_y_simple}
Let $\Omega\subset \mathbb R^n_x\times \mathbb R_y$ be open and $y$-simple ($n\in \mathbb N_0$), and let $u\in \mathcal D'(\Omega)$. Then
\[
\p_y u = 0 \ \Leftrightarrow \exists\ v\in \mathcal D'(\pr_x(\Omega))\colon\ u(x,y) = (v\otimes T_{\mathbf 1_{\mathbb R_y}})\big|_\Omega\,.
\]
(Here $\pr_x$ denotes the projection into $\mathbb R^n_x$.)
\end{theorem}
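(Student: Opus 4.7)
The direction $(\Leftarrow)$ is immediate: $\partial_y(v\otimes T_{\mathbf 1_{\mathbb R_y}})=v\otimes\partial_y T_{\mathbf 1_{\mathbb R_y}}=0$, and this property survives restriction to $\Omega$. For $(\Rightarrow)$ the plan is to imitate the proof of Theorem~\ref{thm:distributional_derivative_tensor_product}, but with the fixed bump $\chi\in\mathcal D(Y)$ replaced by a family $\chi(\,\cdot\,-\theta(x))$ parameterized by an $x$-dependent "height" $\theta$ whose graph lies in $\Omega$, and with the antiderivative trick localized sectionwise so that $y$-simplicity guarantees it stays in $\mathcal D(\Omega)$.

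First, I would record an $n$-dimensional analog of Lemma~\ref{lem:OnOpenX-SimpleRegion}: the projection $\pr_X\Omega$ is open in $\mathbb R^n_x$, the section bounds $a:=\mathrm{lb}_\Omega$, $b:=\mathrm{ub}_\Omega$ are upper/lower semi-continuous, and there exists $\theta\in{\rm C}^\infty(\pr_X\Omega)$ with $a(x)<\theta(x)<b(x)$ for all $x\in\pr_X\Omega$; the partition-of-unity argument carries over verbatim. A standard openness-plus-compactness argument then shows that for any compact $K\subset\pr_X\Omega$ there is $\varepsilon_K>0$ with $\{(x,y):x\in K,\,|y-\theta(x)|<\varepsilon_K\}\subset\Omega$.

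Next, I would define $v\in\mathcal D'(\pr_X\Omega)$ by
\[
\lara{v}{\psi}:=\blara{u(x,y)}{\psi(x)\,\chi(y-\theta(x))},\qquad \psi\in\mathcal D(\pr_X\Omega),
\]
where $\chi\in\mathcal D(\mathbb R)$ satisfies $\int\chi=1$ and $\supp\chi\subset(-\varepsilon_K,\varepsilon_K)$ for $K:=\supp\psi$ (so the bracket test function lies in $\mathcal D(\Omega)$). Independence of the choice of $\chi$ is the first place $\partial_y u=0$ enters: if $\chi_1,\chi_2$ are two such bumps, then $\chi_1-\chi_2=\eta'$ with $\eta\in\mathcal D(\mathbb R)$ supported in $(-\varepsilon_K,\varepsilon_K)$, hence
\[
\blara{u}{\psi(x)\big(\chi_1(y-\theta(x))-\chi_2(y-\theta(x))\big)}=\blara{u}{\partial_y\big(\psi(x)\eta(y-\theta(x))\big)}=-\blara{\partial_y u}{\cdots}=0.
\]
Continuity of $v$ on $\mathcal D(\pr_X\Omega)$ is then routine from the continuity of $u$ on $\mathcal D(\Omega)$.

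The decisive step is to check $u=(v\otimes T_{\mathbf 1_{\mathbb R_y}})|_\Omega$. Given $\varphi\in\mathcal D(\Omega)$, set $\psi(x):=\int_{\mathbb R}\varphi(x,y)\,{\rm d}y\in\mathcal D(\pr_X\Omega)$ and
\[
\Phi(x,y):=\int_{-\infty}^{y}\Big(\varphi(x,y')-\psi(x)\chi(y'-\theta(x))\Big)\,{\rm d}y'.
\]
The integrand has vanishing total $y$-integral for every $x$, so $\Phi(x,\cdot)$ is compactly supported in the convex hull of $\supp\varphi(x,\cdot)\cup\supp\chi(\,\cdot\,-\theta(x))$. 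Since $\Omega$ is $y$-simple, this convex hull lies inside the connected interval $\Omega_x$, whence $\supp\Phi\subset\Omega$ and $\Phi\in\mathcal D(\Omega)$. Now
\[
\lara{u}{\varphi}-\lara{v}{\psi}=\blara{u}{\varphi(x,y)-\psi(x)\chi(y-\theta(x))}=\lara{u}{\partial_y\Phi}=-\lara{\partial_y u}{\Phi}=0,
\]
which together with $\lara{v}{\psi}=\lara{v\otimes T_{\mathbf 1_{\mathbb R_y}}}{\varphi}$ finishes the proof.

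The main obstacle is ensuring $\Phi\in\mathcal D(\Omega)$: this is exactly where $y$-simplicity is indispensable. Without connectedness of $\Omega_x$, the antiderivative could be forced to "bridge" a gap in $\Omega_x$ between the support of $\varphi(x,\cdot)$ and the bump near $\theta(x)$, producing support outside $\Omega$; the counterexamples from Sections~\ref{sec:FundamentalSolutionSetsOfLinODEsWithParameter}–\ref{sec:ExistenceOfSolutionsOfInhomLinODEsWithParameter} show this obstruction cannot be avoided in the non-$y$-simple case.
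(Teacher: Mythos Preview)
Your argument is correct and takes a genuinely different route from the paper's. The paper localizes: it covers $\Omega$ by open rectangles $X_i\times Y_i$, applies Theorem~\ref{thm:distributional_derivative_tensor_product} on each to obtain $v^i\in\mathcal D'(X_i)$ with $u|_{X_i\times Y_i}=v^i\otimes T_{\mathbf 1_{Y_i}}$, and then uses $y$-simplicity to show the $v^i$ form a coherent family (when $Y_i\cap Y_j=\varnothing$ but $X_i\cap X_j\ne\varnothing$, a finite chain of overlapping rectangles connects them along a fiber), finally gluing via the sheaf property of distributions. You instead globalize the proof of Theorem~\ref{thm:distributional_derivative_tensor_product} directly: a smooth section $\theta$ replaces the fixed basepoint, the shifted bump $\chi(\cdot-\theta(x))$ replaces the fixed $\chi$, and $y$-simplicity is invoked once, to guarantee that the fiberwise convex hull needed for the antiderivative $\Phi$ stays inside $\Omega$.

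Each approach has its merits. Yours is more explicit (it produces a formula for $v$) and avoids the sheaf machinery, at the cost of importing the $n$-dimensional version of Lemma~\ref{lem:OnOpenX-SimpleRegion}. The paper's argument is more modular: it needs only the product case and the sheaf property, and never constructs a global section. One small point you should tighten: from the fiberwise statement ``the convex hull lies inside $\Omega_x$'' you pass to ``$\supp\Phi\subset\Omega$ and $\Phi\in\mathcal D(\Omega)$'', but this requires knowing that the union of the fiberwise hulls over $x\in\pr_X(\supp\varphi)$ is a \emph{compact} subset of $\Omega$. This is true (it is the image of a compact fibered product under a continuous map, and containment in $\Omega$ follows from convexity of each $\Omega_x$), but deserves one sentence.
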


\begin{proof} Only the direction $(\Rightarrow)$ requires a proof. Thus let $\{X_i\times Y_i\mid i\in J\}$ be an open
cover of~$\Omega$ with $X_i$ open in $\mathbb R^n_x$ and $Y_i$ open intervals in $\mathbb R_y$. Then $\p_y (u\big|_{X_i\times Y_i})=0$
for all $i\in J$, and so by Theorem \ref{thm:distributional_derivative_tensor_product} there exist
$v^i\in \mathcal D'(X_i)$ such that $u\big|_{X_i\times Y_i} = v^i\otimes T_{\mathbf 1_{Y_i}}$.

We now show that $\{v^i\mid i\in J\}$ forms a coherent family of distributions associated to the covering
$\{X_i\mid i\in J\}$ of $\pr_x\Omega$. To see this, suppose that $X_i\cap X_j\not=\varnothing$. We have to
show that then $v^i\big|_{X_i\cap X_j} = v^j\big|_{X_i\cap X_j}$. We distinguish two cases:

First, if $Y_i\cap Y_j\not=\varnothing$, then $U:=(X_i\times Y_i)\cap (X_j\times Y_j)\not=\varnothing$,
and therefore
\[
(v^i\otimes T_{\mathbf 1_{Y_i}})\big|_U = u\big|_U = (v^j\otimes T_{\mathbf 1_{Y_j}})\big|_U,
\]
so that indeed $v^i\big|_{X_i\cap X_j} = v^j\big|_{X_i\cap X_j}$.

Second, suppose that $Y_i\cap Y_j=\varnothing$ and fix arbitrary $x\in X_i\cap X_j$ and $y_i\in Y_i$, $y_j\in Y_j$, assuming without
loss of generality that $y_i<y_j$. Since $\Omega$ is $y$-simple, we may pick a finite subset~$J'$ of~$J$
such that $\{X_k\times Y_k\mid k\in J'\}$
is a minimal covering of $\{x\}\times [y_i,y_j]$. Let $J'=\{k_1,\dots,k_p\}$, $Y_{k_l}=(a_{k_l},b_{k_l})$, $l=1,\dots,p$,
and assume without loss of generality that $a_{k_1}<a_{k_2}<\dots <a_{k_p}$.
Then $V:=\bigcap_{k\in J'}X_k$ is an open neighborhood of $x$, and by the previous case we have
\[
v^i\big|_V = v^{k_1}\big|_V = \dots = v^{k_p}\big|_V = v^j\big|_V,
\]
which allows us to conclude that $v^i\big|_{X_i\cap X_j} = v^j\big|_{X_i\cap X_j}$ also in this case.

By the sheaf property of distributions \cite[Chapitre~I, \S~3, Th\'eor\`eme~IV]{Schwartz1966}, it follows that there
exists a unique $v\in \mathcal D'(\pr_x(\Omega))$ such that $v\big|_{X_i}=v^i$ for all $i\in J$. Thus
\[
u\big|_{X_i\times Y_i} = v^i\otimes T_{\mathbf 1_{Y_i}} = v\big|_{X_i} \otimes T_{\mathbf 1_{Y_i}}
= (v\otimes T_{\mathbf 1_{\mathbb R_y}})\big|_{X_i\times Y_i}.
\]
Again by the sheaf property of distributions, $u=(v\otimes T_{\mathbf 1_{\mathbb R_y}})\big|_\Omega$.
\end{proof}

\begin{remark}\looseness=-1
The condition on the simplicity of the domain with respect to the variable involved in the distributional derivative
is essential in Theorem~\ref{thm:distributional_derivative_tensor_product_y_simple}.
Indeed, given a nonempty open set $\Omega\subset \mathbb R^n_x\times \mathbb R_y$ ($n\in \mathbb N_0$) that is not $y$-simple,
we may take some $x_0\in\mathbb R^n_x$ such that the intersection $\Omega_{x_0}:=(\{x_0\}\times \mathbb R) \cap \Omega$ is non-empty
and not connected.
For $-\infty\leqslant y_0<y_1\leqslant y_2<y_3\leqslant+\infty$
such that $\{x_0\}\times(y_0,y_1)$ and $\{x_0\}\times(y_2,y_3)$ are connected components of~$\Omega_{x_0}$
and for any different and nonzero $c_1,c_2\in\mathbb R$, consider the distribution
\[
u:=\left(\delta_{x_0}\otimes\big(c_1T_{\mathbf 1_{(y_0,y_1)}}+c_2T_{\mathbf 1_{(y_2,y_3)}}\big)\right)\Big|_\Omega\in\mathcal D'(\Omega).
\]
Then $u$ is not of the form given in Theorem~\ref{thm:distributional_derivative_tensor_product_y_simple}
although $\p_xu=0$.
\end{remark}

\end{appendices}

\subsection*{Acknowledgements}

The authors thank Michael Grosser and Galyna Popovych for helpful discussions and interesting comments.
The research of ROP was supported by the Austrian Science Fund (FWF), projects P25064 and P30233.
ROP is also grateful to the project No.\ CZ.$02.2.69\/0.0/0.0/16\_027/0008521$
``Support of International Mobility of Researchers at SU'' which supports international cooperation. 


\begin{thebibliography}{99}
\footnotesize
\itemsep=0ex

\bibitem{Amann1990}
Amann H.,
{\it Ordinary differential equations},
De Gruyter Studies in Mathematics, 13, Walter de Gruyter \& Co., Berlin, 1990.

\bibitem{Friedlander1998}
Friedlander F.G.,
{\it Introduction to the theory of distributions},
Second edition. With additional material by M. Joshi, Cambridge University Press, Cambridge, 1998.

\bibitem{Hartman2002}
Hartman P.,
{\it Ordinary differential equations},
Corrected reprint of the second (1982) edition [Birkh\"auser, Boston, MA],
Classics in Applied Mathematics, 38, Society for Industrial and Applied Mathematics (SIAM), Philadelphia, PA, 2002.

\bibitem{Hoermander1976}
H\"ormander L.,
{\it Linear partial differential operators},
Springer-Verlag, Berlin--New York, 1976.

\bibitem{Hoermander1983}
H\"ormander L.,
{\it The analysis of linear partial differential operators. I. Distribution theory and Fourier analysis},
Grundlehren der Mathematischen Wissenschaften, vol.~256,  Springer-Verlag, Berlin, 1983.

\bibitem{Hoermander1983_2}
H\"ormander L.,
{\it The analysis of linear partial differential operators. I. Distribution theory and Fourier analysis},
Grundlehren der Mathematischen Wissenschaften, vol.~257,  Springer-Verlag, Berlin, 1983.

\bibitem{MadsenTornehave1997}
Madsen I. and Tornehave J.,
{\it From calculus to cohomology. de Rham cohomology and characteristic classes},
Cambridge University Press, Cambridge, 1997.

\bibitem{Malgrange&Garding1961}
Malgrange B. and Garding L.,
Op\'erateurs diff\'erentiels partiellement hypoelliptiques et partiellement elliptiques,
{\it Math. Scand.} {\bf 9} (1961), 5--21. (French)

\bibitem{Marsden&Tromba}
Marsden J.E. and Tromba A.J.,
{\it Vector Calculus},
Fifth edition, Freeman and Company, New York, 2003.

\bibitem{Matveev1979}
Matveev V.B.,
Darboux transformation and explicit solutions of the Kadomtcev--Pet\-viasch\-vily equation, depending on functional parameters,
{\it Lett. Math. Phys.} {\bf 3} (1979), 213--216.

\bibitem{Matveev&Salle1991}
Matveev V.B. and Salle M.A.,
{\it Darboux transformations and solitons},
Springer-Verlag, Berlin, 1991.

\bibitem{Popovych&Kunzinger&Ivanova2008}
Popovych R.O., Kunzinger M. and Ivanova N.M.,
Conservation laws and potential symmetries of linear parabolic equations,
{\it Acta. Appl. Math.} {\bf 100} (2008), 113--185, arXiv:0706.0443.

\bibitem{Schwartz1957}
Schwartz L.,
Distributions semi-r\'eguli\`eres et changements de coordonn\'ees,
{\it J. Math. Pures Appl. (9)} {\bf 36} (1957), 109--127. (French)

\bibitem{Schwartz1966}
Schwartz L.,
{\it Th\'eorie des distributions},
Publications de l'Institut de Math\'ematique de l'Universit\'e de Strasbourg, Hermann, Paris, 1966. (French)

\bibitem{Walter1998}
Walter W.,
{\it Ordinary differential equations},
Graduate Texts in Mathematics, 182, Springer-Verlag, New York, 1998.


\end{thebibliography}
\end{document}